\documentclass[11pt,letterpaper]{article}

\pdfoutput=1
\pdfminorversion=4

\usepackage{fullpage}
\usepackage{makeidx}         
\usepackage{graphicx}        

\usepackage{amsmath,amssymb,amsthm,mathrsfs}
\usepackage[colorlinks,bookmarksopen,bookmarksnumbered,citecolor=red,urlcolor=red]{hyperref}
\usepackage{natbib}
\usepackage[font=footnotesize]{subcaption}
\usepackage[font=footnotesize]{caption}
\usepackage{algpseudocode}
\usepackage{algorithm}
\usepackage{color}
\usepackage{bm}
\usepackage{bbm}
\usepackage{float}
\usepackage{scalerel,stackengine}
\usepackage{epstopdf}
\usepackage{enumitem}
\usepackage[title]{appendix}
\usepackage{accents}
\usepackage{authblk}
\usepackage{mdframed}
\usepackage{tabularx,booktabs}

\usepackage{pdflscape}

\usepackage{psfrag}
\usepackage{xspace}

\newtheoremstyle{nospace}
{4pt}   				
{4pt}   				
{\itshape}  			
{} 		  		    	
{\bfseries} 			
{.}         			
{5pt plus 1pt minus 1pt}
{}          			

\theoremstyle{nospace} \newtheorem{theorem}{Theorem}
\theoremstyle{nospace} 
\theoremstyle{nospace} 
\theoremstyle{nospace} \newtheorem{remark}{Remark}
\theoremstyle{nospace} \newtheorem{example}{Example}
\theoremstyle{nospace} 
\theoremstyle{nospace} 
\theoremstyle{nospace} \newtheorem{assumption}{Assumption}

\newcommand{\argmin}{\operatornamewithlimits{argmin}}

\stackMath
\newcommand\wwidehat[1]{%
\savestack{\tmpbox}{\stretchto{%
  \scaleto{%
    \scalerel*[\widthof{\ensuremath{#1}}]{\kern-.6pt\bigwedge\kern-.6pt}%
    {\rule[-\textheight/2]{1ex}{\textheight}}
  }{\textheight}%
}{0.5ex}}%
\stackon[1pt]{#1}{\tmpbox}%
}

\newmdenv[topline=false,bottomline=false,rightline=false]{leftbox}

\newcommand{\X}{\mathcal{X}}
\newcommand{\U}{\mathcal{U}}
\newcommand{\Y}{\reals^n}
\newcommand{\Hk}{\mathcal{H}_{K}}
\newcommand{\Lin}{\mathcal{L}}
\newcommand{\reals}{\mathbb{R}}
\newcommand{\ip}[2]{\left\langle #1, #2 \right\rangle}

\newcommand{\V}{\mathcal{V}}
\newcommand{\Kb}{K^B}
\newcommand{\Hkb}{\mathcal{H}_K^B}
\newcommand{\Vb}{\mathcal{V}_{B}}
\newcommand{\Vf}{\mathcal{V}_{f}}

\newcommand{\Fl}{\mathcal{F}_{\lambda}}

\newcommand{\bs}{\mathfrak{b}}

\newcommand{\kwp}{\hat{\kappa}}
\newcommand{\kw}{\kappa}
\newcommand{\Hwp}{\mathcal{H}_{\hat{\kappa}}}
\newcommand{\Hw}{\mathcal{H}_{\kappa}}

\newcommand{\Sj}{\mathbb{S}}
\newcommand{\Sjpp}{\mathbb{S}^{>0}}
\newcommand{\Sjp}{\mathbb{S}^{\geq 0}}

\newcommand{\wl}{\underline{w}}
\newcommand{\wu}{\overline{w}}

\newcommand{\xs}{x_i}
\newcommand{\us}{u_i}

\newcommand{\dx}{\delta_x}
\newcommand{\ddx}{\dot{\delta}_x}

\newcommand{\revision}[1]{{\color{black}{#1}}}


\begin{document}

\title{Learning Stabilizable Nonlinear Dynamics \\ with Contraction-Based Regularization}

\author[1]{Sumeet Singh}
\author[1]{Spencer M. Richards}
\author[2]{Vikas Sindhwani}
\author[3]{Jean-Jacques E. Slotine}
\author[1]{\\ Marco Pavone}

\affil[1]{Department of Aeronautics and Astronautics, Stanford University \thanks{ \{ssingh19,spenrich,pavone\}@stanford.edu} }
\affil[2]{Google Brain Robotics, New York \thanks{ sindhwani@google.com } }
\affil[3]{Department of Mechanical Engineering, Massachusetts Institute of Technology \thanks{ jjs@mit.edu }}


\maketitle


\begin{abstract}
We propose a novel  framework for learning stabilizable nonlinear dynamical systems for continuous control tasks in robotics. The key contribution is a control-theoretic regularizer for dynamics fitting rooted in the notion of {\it stabilizability}, a constraint which guarantees the existence of robust tracking controllers for arbitrary open-loop trajectories generated with the learned system. Leveraging tools from contraction theory and statistical learning in Reproducing Kernel Hilbert Spaces, we formulate stabilizable dynamics learning as a functional optimization with convex objective and bi-convex functional constraints. Under a mild structural assumption and relaxation of the functional constraints to sampling-based constraints, we derive the optimal solution with a modified Representer theorem. Finally, we utilize random matrix feature approximations to reduce the dimensionality of the search parameters and formulate an iterative convex optimization algorithm that jointly fits the dynamics functions and searches for a \emph{certificate of stabilizability}. We validate the proposed algorithm in simulation for a planar quadrotor, and on a quadrotor hardware testbed emulating planar dynamics. We verify, both in simulation and on hardware, significantly improved trajectory generation and tracking performance with the control-theoretic regularized model over models learned using traditional regression techniques, especially when learning from small supervised datasets. 
The results support the conjecture that the use of stabilizability constraints as a form of regularization can help prune the hypothesis space in a manner that is tailored to the downstream task of trajectory generation and feedback control, resulting in models that are not only dramatically better conditioned, but also data efficient.
\end{abstract}


\section{Introduction}

The problem of efficiently and accurately estimating an unknown dynamical system, \begin{equation}
    \dot{x}(t) = F(x(t),u(t)), 
\label{ode}
\end{equation} from a small set of sampled trajectories, where $x \in \reals^n$ is the state and $u \in \reals^m$ is the control input, is a central task in model-based Reinforcement Learning (RL). In this setting, a robotic agent strives to pair an estimated  dynamics model with a feedback policy in order to act optimally in a dynamic and uncertain environment. The model of the dynamical system can be continuously updated as the robot experiences the consequences of its actions, and the improved model can be  leveraged for different tasks, affording a natural form of transfer learning. When it works, model-based RL typically offers major improvements in sample efficiency in comparison to state-of-the-art model-free methods such as Policy Gradients~\citep{ChuaCalandraEtAl2018,NagabandiKahnEtAl2017} that do not explicitly estimate the underlying system. Yet, all too often, when standard supervised learning with powerful function approximators such as Deep Neural Networks and Kernel Methods are applied to model complex dynamics, the resulting controllers do not perform on par with model-free RL methods in the limit of increasing sample size, due to compounding errors across long time horizons. The main goal of this paper is to develop a new control-theoretic regularizer for dynamics fitting rooted in the notion of {\it stabilizability}, which guarantees that the existence of a robust tracking controller for arbitrary open-loop trajectories generated with the learned system.

\medskip

\noindent {\bf Problem Statement}: The motion planning task we wish to solve is to compute a (possibly non-stationary) policy mapping state and time to control that drives any given initial state to a desired compact goal region, while satisfying state and control input constraints, and minimizing some task specific performance cost (e.g., control effort and time to completion). However, in this work, we assume that the dynamics function $F(x,u)$ is unknown to us and we are instead provided with a dataset of tuples $\{(\xs, \us, \dot{x}_i)\}_{i=1}^{N}$ taken from a collection of observed trajectories (e.g., expert demonstrations) on the robot. Accordingly, the objective of this work is to learn a dynamics model $F(\cdot,\cdot)$ for the robot that is subsequently amenable for use within standard planning algorithms. 

\medskip

\noindent {\bf Approach Overview}: Our parametrization of the policy takes the form $u^*(t) + k(x^*(t),x(t))$, where $(x^*, u^*)$ is a nominal open-loop state-input control trajectory tuple, and $k(\cdot,\cdot)$ is a feedback tracking controller. The performance of such a policy however, is strongly reliant upon the quality of the computed state-input trajectory and the tracking controller. 

Formally, a reference state-input trajectory tuple $(x^*(t), u^*(t)),\ t \in [0,T]$ for system~\eqref{ode} is termed \emph{exponentially stabilizable at rate $\lambda>0$} if there exists a feedback controller $k : \reals^n \times \reals^n \rightarrow \reals^m$ such that the solution $x(t)$ of the system:
\[
    \dot{x}(t) = F(x(t), u^*(t) + k(x^*(t),x(t))),
\]
converges exponentially to $x^*(t)$ at rate $\lambda$. That is,
\begin{equation}
    \|x(t) - x^*(t)\|_2 \leq C \|x(0) - x^*(0)\|_2 \ e^{-\lambda t}
\label{exp_stab}
\end{equation}
for some constant $C>0$. The \emph{system}~\eqref{ode} is termed \emph{exponentially stabilizable at rate $\lambda$} in an open, connected, bounded region $\X \subset \reals^n$ if all state trajectories $x^*(t)$ satisfying $x^*(t) \in \X,\ \forall t \in [0,T]$ are exponentially stabilizable at rate $\lambda$. 

In this work, we illustrate that na{\"i}ve regression techniques used to estimate the dynamics model from a small set of sample trajectories can yield model estimates that are severely ill-conditioned for trajectory generation and feedback control. Instead, this work advocates for the use of a \emph{constrained} regression approach in which one attempts to solve the following problem:
\begin{align}
    \min_{\hat{F} \in \mathcal{H}} \quad & \sum_{i=1}^{N} \left\| \hat{F}(\xs,\us) - \dot{x}_i \right\|_2^2 + \mu \|\hat{F}\|^2_{\mathcal{H}} \label{prob_gen} \\
    \text{s.t.} \quad & \text{$\hat{F}$ is stabilizable,}
\end{align}
where $\mathcal{H}$ is an appropriate normed function space and $\mu >0$ is a regularization parameter. Note that we use $(\hat{\cdot})$ to differentiate the learned dynamics from the true dynamics. We demonstrate that for systems that are indeed stabilizable, enforcing such a constraint drastically \emph{prunes the hypothesis space}, and therefore plays the role of a \emph{``control-theoretic'' regularizer} that is potentially more powerful and ultimately, more pertinent for the downstream control task of generating and tracking new trajectories.

\medskip

\noindent {\bf Statement of Contributions:} Stabilizability of trajectories is not only a complex task in nonlinear control, but also a difficult notion to capture (in an algebraic sense) within a unified control theory. In this work, we leverage recent advances in contraction theory for control design through the use of \emph{Control Contraction Metrics} (CCMs)~\citep{ManchesterSlotine2017,SinghMajumdarEtAl2017} that turn stabilizability constraints into convex state-dependent Linear Matrix Inequalities (LMIs). Contraction theory~\citep{LohmillerSlotine1998} is a method of analyzing nonlinear systems in a differential framework, i.e., via the associated variational system~\cite[Chp 3]{CrouchSchaft1987}, and is focused on the study of convergence between pairs of state trajectories towards each other. Thus, at its core, contraction explores a stronger notion of stability -- that of incremental stability between solution trajectories, instead of the stability of an equilibrium point or invariant set. Importantly, we harness recent results in~\citep{ManchesterTangEtAl2015,ManchesterSlotine2017,SinghMajumdarEtAl2017} that illustrate how to use contraction theory to obtain a \emph{certificate} for trajectory stabilizability and an accompanying tracking controller with exponential stability properties. For self containment, we provide a brief summary of these results in Section~\ref{sec:ccms}, which in turn will form the foundation of this work.
 
 Our paper makes the following primary contributions. 
 \begin{itemize}
 \item We formulate the learning stabilizable dynamics problem through the lens of control contraction metrics (Section~\ref{sec:prob}). The resulting optimization problem is not only infinite-dimensional, as it is formulated over function spaces, but also infinitely-constrained due to the state-dependent LMI representing the stabilizability constraint. 
 \item Under an arguably weak assumption on the structural form of the true dynamics model and a relaxation of the functional constraints to sampling-based constraints (Section~\ref{sec:reg}), we derive a Representer Theorem~\citep{ScholkoepfSmola2001} specifying the form of the optimal solutions for the dynamics functions and the certificate of stabilizability by leveraging the powerful framework of vector-valued Reproducing Kernel Hilbert Spaces (Section~\ref{sec:deriv}). We motivate the sampling-based relaxation of the functional constraints from a standpoint of viewing the stabilizability condition as a novel control-theoretic \emph{regularizer} for dynamics learning. 
 \item By leveraging theory from randomized matrix feature approximations, we derive a tractable algorithm leveraging alternating convex optimization problems and adaptive sampling to iteratively solve a \emph{finite-dimensional} optimization problem (Section~\ref{sec:soln}). 
 \item We perform an extensive set of numerical simulations on a 6-state, 2-input planar quadrotor model and provide a comprehensive study of various aspects of the iterative algorithm. Specifically, we demonstrate that na{\"i}ve regression-based dynamics learning can yield estimated models that generate completely unstabilizable trajectories. In contrast, the control-theoretic regularized model generates vastly superior quality trackable trajectories, especially when learning from small supervised datasets (Sections~\ref{sec:pvtol_sim} and~\ref{sec:results}). 
 \item We validate our algorithm on a quadrotor testbed (Section~\ref{sec:pvtol_exp}) with partially closed control loops to emulate a planar quadrotor, where we verify that the stabilizability regularization effects in low-data regimes observed in simulations does indeed generalize to real-world noisy data. In particular, with just 150 noisy tuples of $(x,u,\dot{x})$, we are able to stably track a challenging test trajectory, which is generated with the learned model and substantially different from any of the training data. In contrast, a model learned using traditional regression techniques leads to consistently unstable behavior and eventual failure as the quadrotor repeatedly flips out of control and crashes (see Figure~\ref{fig:150_overlays}).
 \end{itemize}
 \begin{figure}[H]
\centering
\begin{subfigure}[t]{0.49\textwidth}
	\includegraphics[width=1\textwidth,height=6cm]{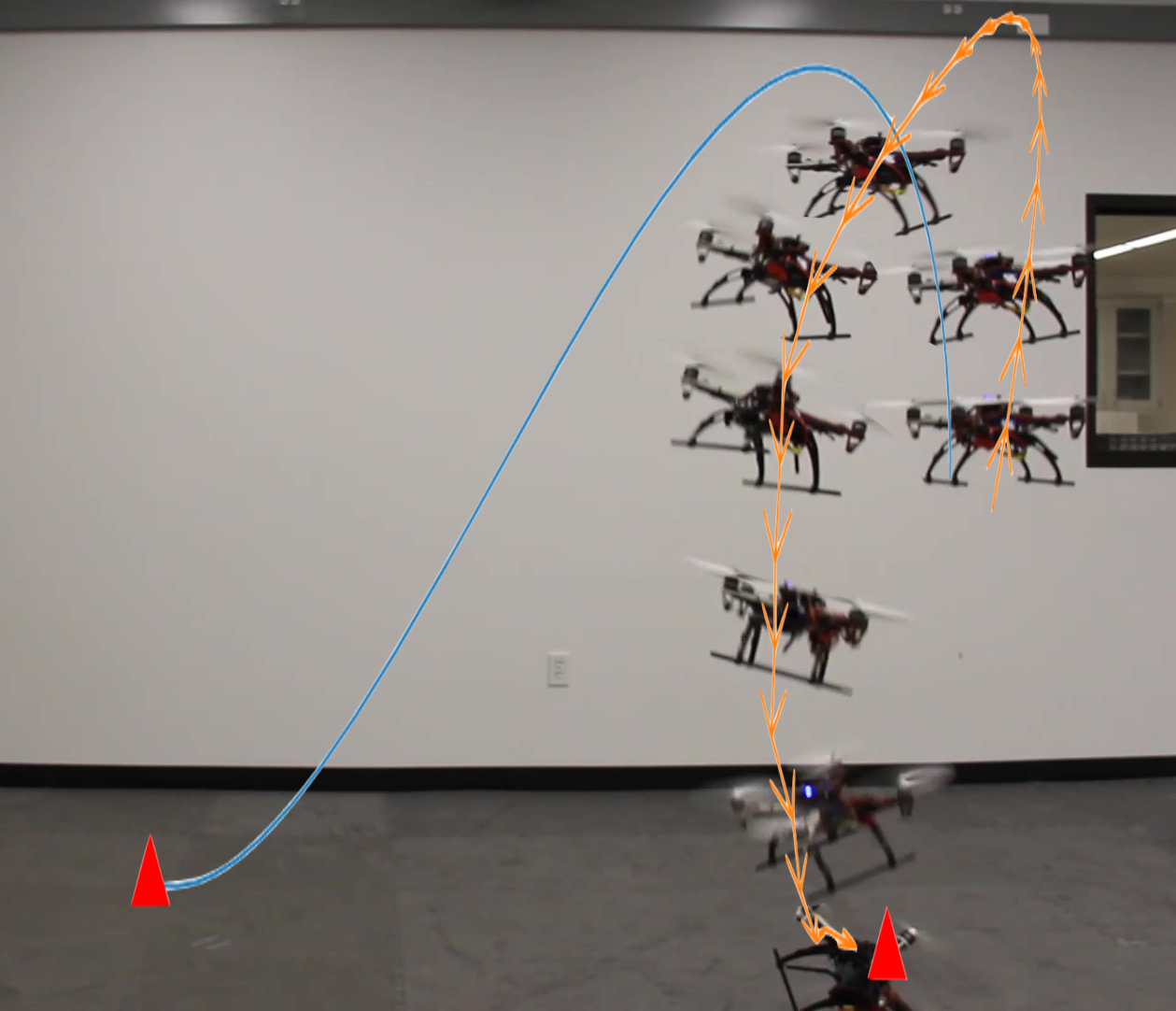}
	\label{fig:l2_overlay}
\end{subfigure} \vspace{-3mm}
\begin{subfigure}[t]{0.49\textwidth}
	\includegraphics[width=1\textwidth,height=6cm]{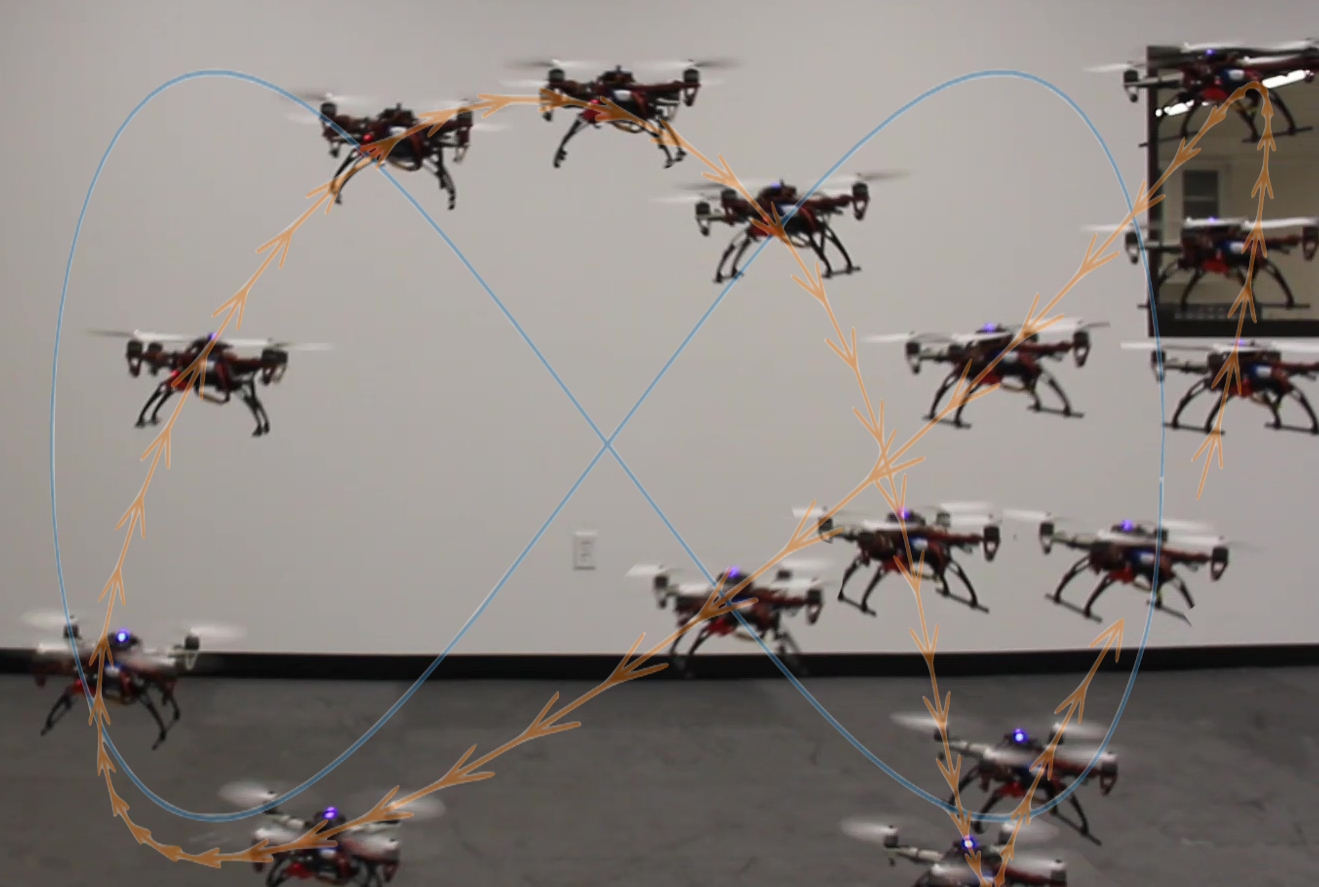}
	\label{fig:ccm_overlay}
\end{subfigure}
	\caption{Time-lapse of a quadrotor trying to execute a figure-eight maneuver (blue curve) using a reference trajectory and an LQR feedback tracking controller generated using the learned dynamical system. \emph{Left}: Model learned using traditional ridge-regression; \emph{Right}: Model learned using control-theoretic regularization proposed within this work. The models were trained with the same, extremely limited (150 points) set of $(x,u,\dot{x})$ supervisory tuples. The quadrotor consistently failed and crashed into the floor with the trajectory and controller generated by the model learned with ridge-regression; the red triangles mark the points along the reference and actual trajectories at moment of crash -- a separation of 1.6 m. In contrast, despite imperfect tracking (not unexpected given the extremely limited amount of supervision given to the learning algorithm), which leads to a slight graze along the floor at one point during the maneuver, the quadrotor manages to maintain bounded tracking error while using the model learned with control-theoretic regularization.}
	\label{fig:150_overlays} 
\end{figure}
 
 A preliminary version of this paper was presented at WAFR 2018~\citep{SinghSindhwaniEtAl2018}. In this revised and extended version, we include the following additional contributions: (i) rigorous derivation of the stabilizability-regularized finite-dimensional optimization problem using RKHS theory and random matrix features; (ii) extensive additional numerical studies into the convergence behavior of the iterative algorithm and comparison with traditional ridge-regression techniques; and (iii) validation of the algorithm on a quadrotor testbed with partially closed control loops to emulate a planar quadrotor. 

\medskip

\noindent {\bf Related Work}: Model-based RL has enjoyed considerable success in various application domains within robotics such as underwater vehicles~\citep{CuiYangEtAl2017}, soft robotic manipulators~\citep{ThuruthelFaloticoEtAl2019}, and control of agents with non-stationary dynamics~\citep{OhnishiWangEtAl2019}. While the literature on model-based RL is substantial; see~\citep{PolydorosNalpantidis2017} for a recent review, we focus our attention on five broad categories relevant to the problem we address in this work. Namely, these are: (i) direct regression for learning the full dynamics, where one ignores any control-theoretic notions tied to the learning task and treats dynamics estimation as a standard regression problem; (ii) residual learning, where one only attempts to learn \emph{corrections} to a nominal prediction model that may have been derived, for example, from physics-based reasoning; (iii) uncertainty-aware model-based RL, where one tries to additionally represent the uncertainty in the learned model using probabilistic representations that are subsequently leveraged within the planning phase using robust or stochastic control techniques; (iv) hybrid model-based/model-free methods; and (v) imitation learning, where one learns dynamical representations of stable closed-loop behavior for a set of outputs (e.g., the end-effector on a robotic arm), and assumes knowledge of the robot controlled dynamics to realize the learned closed-loop motion, for instance, using dynamic inversion.  

The simplest approach to learning dynamics is to ignore stabilizability and treat the problem as a standard one-step time series regression task~\citep{PunjaniAbbeel2015,BansalAkametaluEtAl2016,NagabandiKahnEtAl2017,PolydorosNalpantidis2017}. However, coarse dynamics models trained on limited training data typically generate trajectories that rapidly diverge from expected paths, inducing controllers that are ineffective when applied to the true system. This divergence can be reduced by expanding the training data with corrections to boost multi-step prediction accuracy~\citep{VenkatramanHebertEtAl2015, VenkatramanCapobiancoEtAl2016}. Despite being effective, these methods are still heuristic in the sense that the existence of a stabilizing feedback controller is not explicitly guaranteed. Alternatively, one can leverage strong physics-based priors and use learning to only regress the unmodeled dynamics. For instance,~\citep{MohajerinMozifianEtAl2019,ShiShiEtAl2019,PunjaniAbbeel2015} aim to capture the unmodeled aerodynamic disturbance terms as corrections to a prior rigid body dynamics model.~\citep{PunjaniAbbeel2015} accomplish this for helicopter dynamics using a deep neural network, but then do not use the learned model for control.~\citep{ShiShiEtAl2019} attempt to capture the unmodeled ground-effect forces on quadrotors to build better controllers for near-ground tracking and precision landing.~\citep{MohajerinMozifianEtAl2019} leverage a residual RNN in combination with a rigid-body model to generate time-series predictions for linear and angular velocities of a quadrotor as a function of current state and candidate future motor inputs, but do not use the model for closed-loop control. Finally,~\citep{ZhouHelwaEtAl2017} adopt a different perspective to learning ``corrections" in that they attempt to learn the inverse dynamics (output to reference) for a system and pre-cascade the resulting predictions to correct an existing controller's reference signal in order to improve trajectory tracking performance. The approach relies on the existence of a stabilizing controller and the stability of the system's zero dynamics, thereby decoupling the effects of learning from stability. In similar spirit,~\citep{TaylorDorobantuEtAl2019} leverage input-output feedback linearization to derive a Control Lyapunov Function (CLF) for a nominal dynamics model, assume that this function is a CLF for the actual dynamics as well, and regress only the correction terms in the derivative of this CLF. While leveraging physics-based priors can certainly be powerful, especially when the residual errors to be learned are small enough such that the system is feedback stabilizable with a controller derived from the physics model, in this work we are interested in the far more challenging scenario when such priors are unavailable and the full dynamics model must be learned from scratch. While exemplified using quadrotor models that can certainly be accurately stabilized even in the absence of learning, the insights provided in this work shed light on fundamental topics in the context of control-theoretic learning, which hopefully may influence dynamics-learning methods in more complex settings where priors are unavailable or too simple to be useful for adequate control. 

An alternative strategy to cope with error in the learned dynamics model is to use uncertainty-aware model-based RL where control policies are optimized with respect to stochastic rollouts from probabilistic dynamics models~\citep{KocijanMurray-SmithEtAl2004,KamtheDeisenroth2018,DeisenrothRasmussen2011,ChuaCalandraEtAl2018}. For instance, PILCO~\citep{DeisenrothRasmussen2011} leverages a Gaussian Process (GP) state transition model and moment matching to analytically estimate the expected cost of a rollout with respect to the induced distribution.~\citep{KamtheDeisenroth2018} extend this formulation using nonlinear model predictive control (MPC) to incorporate chance constraints.~\citep{ChuaCalandraEtAl2018} leverage an ensemble of probabilistic models to capture both epistemic (i.e., model) and aleatoric (i.e., intrinsic) uncertainty, and compute their control policy in receding horizon fashion through finite sample approximation of the random cost. Probabilistic models such as GPs may also be used to capture the residual error between a nominal physics-based model and the true dynamics. In~\citep{OstafewSchoelligEtAl2016}, a GP is incrementally learned over multiple trials to capture unmodeled disturbances. The 3$\sigma$ prediction range is subsequently leveraged to formulate chance constraints as a robust nonlinear MPC problem. The goal of~\citep{FisacAkametaluEtAl2017} and~\citep{BerkenkampTurchettaEtAl2017} is motivated from a safety perspective, where one wishes to actively learn a control policy while remaining ``safe" in the presence of unmodeled dynamics, represented as GPs. The authors in~\citep{FisacAkametaluEtAl2017} leverage Hamilton-Jacobi reachability analysis to give high-probability invariance guarantees for a region of the state-space within which the learning controller is free to explore. On the other hand, ~\citep{BerkenkampTurchettaEtAl2017} utilize Lyapunov analysis and smoothness arguments to incrementally grow the Lyapunov function's region of attraction while simultaneously updating the GP. For the special case where the underlying dynamics are linear-time-invariant,~\citep{DeanTuEtAl2019} derive high-probability convergence rates for the estimated model and leverage system-level robust control techniques~\citep{WangMatniEtAl2019} for guaranteeing state and control constraint satisfaction. 

While utilizing probabilistic prediction models along with a control strategy that incorporates this uncertainty, such as robust or approximate stochastic MPC, can certainly help guard against imperfect dynamics models, large uncertainty in the dynamics can lead to overly conservative strategies. This is true especially when the learned model is not merely a correction or residual term, or if the probabilistic model is computationally intractable to use within planning (e.g., GPs without additional sparsifying simplifications), thereby forcing conservative approximations. Finally, with the exception of the ``safe" RL methods mentioned above, the learning algorithms themselves do not incorporate knowledge of the downstream application of the function being regressed, in that learning is viewed purely from a \emph{statistical point-of-view, rather than within a control-theoretic context.} 

More recently, hybrid combinations of model-based and model-free techniques have gained attention within the learning community. The authors in~\citep{BansalCalandraEtAl2017} use Bayesian optimization to find an optimal linear dynamics model whose induced MPC policy minimizes the task-specific cost. In similar spirit,~\citep{AmosRodriguezEtAl2018} differentiate through the fixed-point solutions of a parametric MPC problem to find optimal MPC cost and dynamics functions in order to minimize the actual task-specific cost.~\citep{NagabandiKahnEtAl2017} use behavioral cloning with respect to an MPC policy generated from a learned dynamics model to initialize model-free policy fine-tuning. The works in~\citep{LevineFinnEtAl2016,FinnLevineEtAl2016,ChebotarHausmanEtAl2017} leverage subroutines where local time-varying dynamics are fitted around a set of policy rollouts, and then used to perform trajectory optimization via an LQR backward pass. The induced local linear-time-varying policy from this rollout is then used as a supervisory signal for global policy optimization. While these lines of work try to frame dynamics fitting within the downstream context of the task, thereby imbuing the resulting learning algorithm with a more closed-loop flavor, the learned dynamics may be substantially different from the actual dynamics of the robot since, with the exception of the local time-varying dynamics fitting, the true goal is to optimize the task-specific cost. This can yield distorted dynamic models whose induced policies are more cost-optimal than policies extracted from the true dynamics. Thus, while the work presented herein espouses a closed-loop learning ideology, it does so from the control-theoretic perspective of trajectory stabilizability, i.e., the true objective is dynamics fitting which will subsequently be used to derive optimal trajectories and tracking controllers. 

Finally, we address lines of work closest in spirit to this work. Learning dynamical systems satisfying some desirable stability properties (such as asymptotic stability about an equilibrium point, e.g., for point-to-point motion) has been studied in the autonomous case, $\dot{x}(t) = f(x(t))$, in the context of imitation learning. In this line of work, one assumes perfect knowledge and invertibility of the robot's \emph{controlled} dynamics to solve for the input that realizes this desirable closed-loop motion~\citep{LemmeNeumannEtAl2014,Khansari-ZadehKhatib2017,RavichandarSalehiEtAl2017,Khansari-ZadehBillard2011,MedinaBillard2017}. In particular, for a vector-valued RKHS formulation in the autonomous case with constant (identity) contraction metric, see~\citep{SindhwaniTuEtAl2018}. Crucially, in our work, we \emph{do not} require knowledge or invertibility of the robot's controlled dynamics. We seek to learn the full controlled dynamics of the robot, under the constraint that the resulting learned dynamics generate dynamically feasible and most importantly, stabilizable trajectories. Thus, this work generalizes existing literature by additionally incorporating the controllability limitations of the robot within the learning problem. 

The tools we develop may also be used to extend standard adaptive robot control design, such as~\citep{SlotineLi1987} -- a technique which achieves stable concurrent learning and control using a combination of physical basis functions and general mathematical expansions, e.g. radial basis function approximations~\citep{SannerSlotine1992}. Notably, our work allows us to handle complex underactuated systems -- a consequence of the significantly more powerful function approximation framework developed herein, as well as of the use of a differential (rather than classical) Lyapunov-like setting, as we shall detail.

\medskip

\noindent {\bf Notation}: Let $\Sj_j$ be the set of symmetric matrices in $\reals^{j\times j}$ and denote $\Sjp_j$, respectively $\Sjpp_j$, to be the set of symmetric positive semi-definite, respectively, positive definite matrices in $\reals^{j\times j}$. Given a matrix $X$, let $\widehat{X}:=  X + X^T$. 
 We denote the components of a vector $y\in \reals^n$ as $y^j,\ j=1,\ldots,n,$ its Euclidean norm as $\|y\|$, and its weighted norm as $\|y\|_A$ where $\|y\|_A = \sqrt{y^T A y}$ for some $A \in \Sjpp_n$. Let $\partial_{y} G(x)$ denote a matrix with $(i,j)^{\text{th}}$ entry given by the Lie derivative of the function $G_{ij}(x)$ along the vector $y$. Finally, let $\bar{\lambda}(A)$ and $\underline{\lambda}(A)$ denote the maximum and minimum eigenvalues of a square matrix $A$. 

\section{Problem Formulation and Solution Methodology} \label{sec:prob_set}

In this section we formally outline the structure of the problem we wish to solve and describe a general solution methodology rooted in model-based RL. To motivate the contributions of this work, we additionally present an attempt at a  solution that uses traditional model-fitting techniques, and demonstrate how it fails to capture the nuances of the problem and ultimately yields sub-par results. 

Consider a robotic system with state $x \in \X$, where $\X$ is an open, connected, bounded subset of $\reals^n$, and control $u \in \U$, where $\U$ is a closed, bounded subset of $\reals^m$, governed by the following continuous-time dynamical system:
\[
	\dot{x}(t) = F(x(t), u(t)),
\]
where $F$ is Lipschitz continuous in the state for fixed control, so that for any measurable control function $u(\cdot)$, there exists a unique state trajectory. The motion planning \emph{task} we wish to solve is to find a (possibly non-stationary) policy $\pi: \X \times \reals \rightarrow \U$ that (i) drives the state $x$ to a compact region $\X_{\text{goal}}\subseteq \X$, (ii) satisfies the state and input constraints, and (iii) minimizes a quadratic cost:
\[
J(\pi) := \int_{0}^{T_{\mathrm{goal}}} 1 + \|\pi(x(t), t)\|_R^2 \ dt,
\]
where $R \in \Sjpp_m$ and $T_{\mathrm{goal}}$ is the first time $x(t)$ enters $\X_{\text{goal}}$. While there exist several methods in the literature on how to solve this problem given knowledge of the dynamical system, in this work, we assume that we do not know the governing model $F(x,u)$. The \emph{problem} we wish to address is how to solve the above motion planning task, given a dataset of tuples $\{ (\xs,\us,\dot{x}_i) \}_{i=1}^{N}$ extracted from observed trajectories on the robot. 

The solution approach presented in this work adopts the model-based RL paradigm, whereby one first estimates a model of the dynamical system $\hat{F}(x,u)$ using some form of regression, and then uses the learned model to solve the motion planning task with traditional planning algorithms. In this work, our strategy to solve the planning task is to parameterize general state-feedback policies as a sum of a nominal (open-loop) input $u^{*}$ and a feedback term designed to track the nominal state trajectory $x^{*}$ (induced by $u^{*}$):
\begin{equation}
	\pi(x(t),\, t) = u^{*}(t) + k(x^{*}(t), x(t)).
\label{net_cont}
\end{equation}
This formulation represents a compromise between the general class of state-feedback control laws (a computationally intractable space over which to optimize) and a purely open-loop formulation (i.e., no tracking). Note that we do \emph{not} present a new methodology for solving the planning task. Specifically, it is assumed that there exists an algorithm for computing (i) the open-loop state and control trajectories $(x^*(t), u^*(t))$ that minimize the open-loop cost:
\[
	J(u(\cdot)) = \int_{0}^{T_{\mathrm{goal}}} 1 + \|u(t)\|_R^2 \ dt,
\]
and (ii) the feedback tracking controller $k(\cdot,\cdot)$, given a dynamical model. The focus of this paper is on how to design the regression algorithm for computing the model estimate $\hat{F}$. 

\subsection{Motivating Example}\label{sec:pvtol_sim}

We ground the formalism within the following running example that will feature throughout this work. 

\begin{example}[PVTOL]
Consider the 6-state planar vertical-takeoff-vertical-landing (PVTOL) system depicted in Figure~\ref{fig:PVTOL_define}. The system is defined by the state $(p_x,p_z,\phi,v_x,v_z,\dot{\phi})$, where $(p_x,p_z)$ is the position in the 2D plane, $(v_x,v_z)$ is the body-reference velocity, and $(\phi,\dot{\phi})$ are the roll and angular rate respectively, and $u \in \reals^2_{>0}$ are the controlled motor thrusts. The true dynamics are given by:
\begin{equation}
    \dot{x}(t) = \begin{bmatrix} v_x \cos\phi - v_z \sin\phi \\ v_x\sin\phi + v_z\cos\phi \\ \dot{\phi} \\ v_z\dot{\phi} - g\sin\phi \\ -v_x\dot{\phi} - g\cos\phi \\ 0 \end{bmatrix} + \begin{bmatrix} 0&0\\0&0 \\0&0 \\0&0 \\ (1/m) &(1/m) \\ l/J & (-l/J) \end{bmatrix}u,
\label{nom_pvtol_dyn}
\end{equation}
where $g$ is the acceleration due to gravity, $m$ is the mass, $l$ is the moment-arm of the thrusters, and $J$ is the moment of inertia about the roll axis. 
\end{example}
\begin{figure}[h]
\centering
	\includegraphics[width=0.5\textwidth]{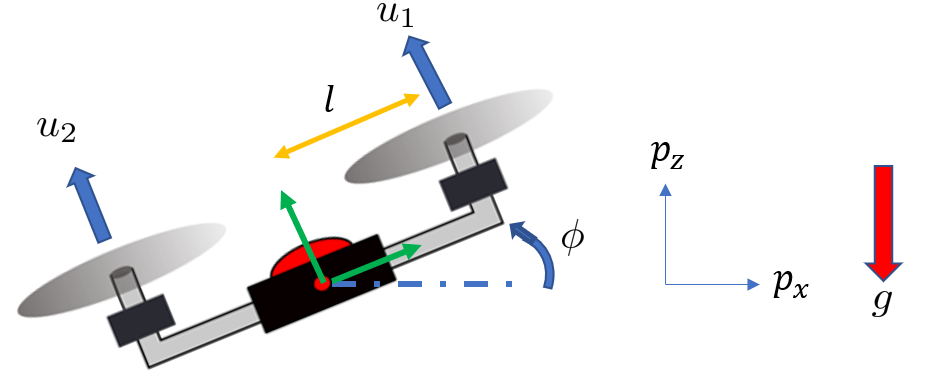}
	\caption{Definition of planar quadrotor state variables: $l$ denotes the thrust moment arm (symmetric), and $u_1$ and $u_2$ denote the right and left thrust forces respectively.}
	\label{fig:PVTOL_define} 
\end{figure}
The planar quadrotor is a complex non-minimum phase dynamical system that has been heavily featured within the acrobatic robotics literature and therefore serves as a suitable case-study. 

\subsubsection{Solution Parametrization}

The dynamics assume the general control-affine form:
\begin{equation}
	\dot{x}(t) = f(x(t)) + B(x(t)) u(t),
\label{cntrl_affine}
\end{equation}
where $f : \X \rightarrow \reals^n$, and $B: \X \rightarrow \reals^{n\times m}$ is the input matrix, depicted in column-stacked form as $(b_1,\ldots,b_m)$. Let us define the model estimate also in control-affine form as $\dot{x} = \hat{f}(x) + \hat{B}(x) u$, where $\hat{B} = (\hat{b}_1,\ldots,\hat{b}_m)$. Consider, as a first solution attempt, the following linear parametrization for the vector-valued functions $\hat{f}$ and $\hat{b}_j$:
\begin{subequations} \label{dyn_param}
\begin{align}
    \hat{f}(x) &= \Phi_f(x)^T \alpha, \\
    \hat{b}_j(x) &= \Phi_b(x)^T \beta_j  \quad j \in \{1,\ldots, m\}, 
\end{align}
\end{subequations}
where $\alpha \in \reals^{d_f}$, $\beta_j \in \reals^{d_b}$ are constant vectors to be optimized over, and $\Phi_f : \X \rightarrow \reals^{d_f\times n}$, $\Phi_b : \X \rightarrow \reals^{d_b \times n}$ are a priori chosen feature mappings. To replicate the sparsity structure of the PVTOL input matrix, the feature matrix $\Phi_b$ has all zeros in its first $n-m$ columns. 

The justification for a linear model and the construction of the feature mappings will be elaborated upon later. At this moment, we wish to study the quality of the learned models obtained from solving the following convex optimization problem:
\begin{equation}
	\min_{\alpha, \{\beta_j\}} \qquad \sum_{i=1}^{N}  \| \hat{f}(\xs) + \hat{B}(\xs)\us - \dot{x}_i \|^2 + \mu_f \|\alpha\|^2 + \mu_b \sum_{j=1}^{m} \|\beta_j\|^2, 
\label{rr_opt}
\end{equation}
where $\mu_f, \mu_b > 0$ are given regularization constants. Note that the above optimization corresponds to the ubiquitous ridge-regression problem and is therefore a viable solution approach. 

To evaluate the feasibility of this solution approach, we extracted a collection of training tuples $ \{(\xs,\us,\dot{x}_i)\}_{i=1}^{N}$ from simulations of the PVTOL system without any noise (for further details, please see Section~\ref{sec:results}). We learned three models: (i) {\bf N-R}: un-regularized model\footnote{A small penalty on $\mu_b$ was necessary since the input feature matrix $\Phi_b$ was chosen to be constant (and degenerate) to reflect the fact that the input matrix for the PVTOL system is constant.} ($\mu_f = 0$, $\mu_b = 10^{-6}$), (ii) {\bf R-R}: standard ridge-regularized model with $\mu_f = 10^{-4}, \mu_b = 10^{-6}$, and (iii) {\bf CCM-R}: control-theoretic regularized model, corresponding to the algorithm proposed within this work and elaborated upon in the remaining paper. 

We learned four versions of the model corresponding to varying training dataset sizes with $N \in \{100, 250, 500, 1000\}$. The dimensions of $\alpha$ and $\beta_j$ were both 576 (corresponding to 96 parameters per state dimension). The feature mappings themselves are described in Section~\ref{sec:results} and Appendix~\ref{app:prob_params}. The regularization constants were held fixed for all $N$. 

\subsubsection{Evaluation}

The evaluation corresponded to the motion planning task of generating and tracking trajectories using the learned models. We gridded the $(p_x,p_z)$ plane to create a set of 120 initial conditions between 4 m and 12 m away from $(0,0)$, and randomly sampled the other states for the rest of the initial conditions. These conditions were \emph{held fixed} for all models and for all training dataset sizes to evaluate model improvement.

For each model at each value of $N$, the evaluation task was to (i) solve a trajectory optimization problem to compute a dynamically feasible trajectory for the learned model to go from initial state $x_0$ to the goal state -- a stable hover at $(0,0)$ at near-zero velocity; and (ii) track this trajectory with a feedback controller computed using time-varying LQR (TV-LQR). Note that all simulations without \revision{any feedback controller (i.e., open-loop control rollouts) led to the PVTOL crashing}. This is understandable since the dynamics fitting objective does not optimize for \emph{multi-step} error. The trajectory optimization step was solved as a fixed-endpoint, fixed-final time optimal control problem using the Chebyshev pseudospectral method~\citep{FahrooRoss2002} with the objective of minimizing $\int_{0}^T \|u(t)\|^2_R\  dt$. The final time $T$ for a given initial condition was held fixed between all models. Note that 120 trajectory optimization problems were solved for each model and each value of $N$.

Figure~\ref{fig:box_all} shows a boxplot comparison of the trajectory-wise RMS full state errors ($\|x(t)-x^*(t)\|$ where $x^*(t)$ is the reference trajectory obtained from the optimizer and $x(t)$ is the actual realized trajectory) for each model and all training dataset sizes. As $N$ increases, the spread of the RMS errors decreases for both R-R and CCM-R models as expected. However, we see that the N-R model generates \emph{several} unstable trajectories for $N \in \{100, 500, 1000\}$, indicating the need for a form of regularization.

For $N=100$ (which is at the extreme lower limit of the necessary number of samples since there are $96$ features for each dimension of the dynamics function), both N-R and R-R models generate a large number of unstable trajectories. In contrast, \emph{all} trajectories generated with the CCM-R model were successfully tracked with bounded error.  The CCM-R model consistently achieves a lower RMS error distribution than both the N-R and R-R models \emph{for all training dataset sizes}. Most notable, however, is its performance when the number of supervision training samples is small (i.e., $N \in \{100, 250\}$) and there is considerable risk of overfitting. It appears the stabilizability constraints leveraged to compute the CCM-R model have a notable regularizing effect on the resulting model trajectories (recall that the initial conditions of the trajectories are held fixed between the models). In Figure~\ref{fig:unstable}, we highlight two trajectories that start from the \emph{same initial conditions} -- one generated and tracked using the R-R model, the other using the CCM-R model, for $N=250$. Overlaid on the plot are snapshots of the vehicle outline itself, illustrating the aggressive flight-regime of the trajectories (the initial bank angle is $40^\mathrm{o}$). While tracking the R-R model generated trajectory eventually ends in \revision{complete loss of control}, the system successfully tracks the CCM-R model generated trajectory to the stable hover at $(0,0$).

\begin{figure}[h]
    \centering
    \includegraphics[width=0.9\textwidth,clip]{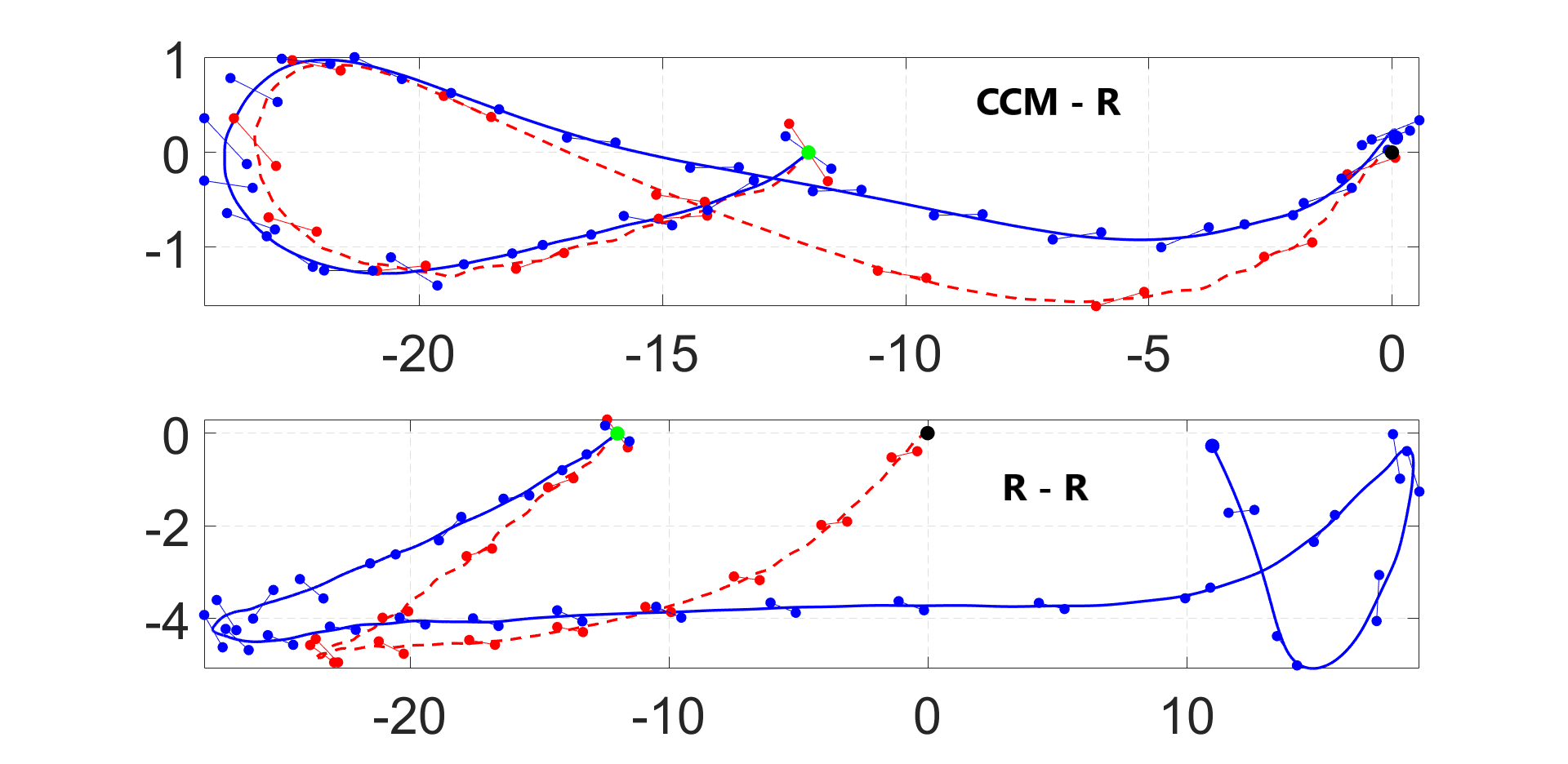}
    \caption{Comparison of reference and tracked trajectories in the $(p_x,p_z)$ plane for R-R and CCM-R models starting at the same initial conditions with $N=250$. Red (dashed): nominal, Blue (solid): actual, Green dot: start, Black dot: nominal endpoint, blue dot: actual endpoint; \emph{Top:} CCM-R, \emph{Bottom:} R-R. The vehicle successfully tracks the CCM-R model generated trajectory to the stable hover at $(0,0)$ while losing control when attempting to track the R-R model generated trajectory.}
        \label{fig:unstable}
\end{figure}

\begin{landscape}
\topskip0pt
\vspace*{\fill}
\begin{figure}[h]
    \centering
    \includegraphics[width=1.35\textwidth,clip]{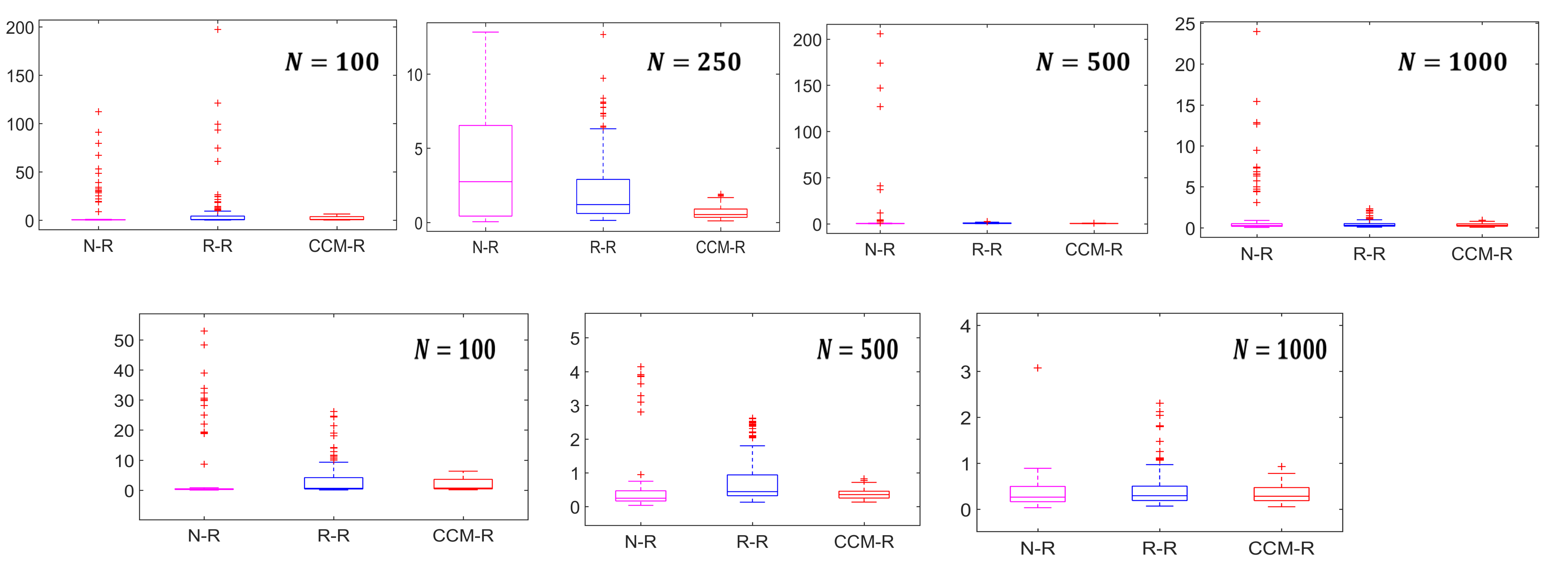}
    \caption{Box-whisker plot comparison of trajectory-wise RMS state-tracking errors over all 120 trajectories for each model and all training dataset sizes. \emph{Top row, left-to-right:} $N=100, 250, 500, 1000$; \emph{Bottom row, left-to-right:} $N=100, 500, 1000$ (zoomed in). The box edges correspond to the $25$th, median, and $75$th percentiles; the whiskers extend beyond the box for an additional 1.5 times the interquartile range; outliers, classified as trajectories with RMS errors past this range, are marked with red crosses. Notice the presence of unstable trajectories for N-R at all values of $N$ and for R-R at $N=100, 250$. The CCM-R model dominates the other two \emph{at all values of $N$}, particularly for $N \in \{ 100, 250 \}$. }
        \label{fig:box_all}
\end{figure}
\vspace*{\fill}
\end{landscape}

\subsubsection{Effect of Regularization}

At this point, one might wonder if the choice of the regularization parameter $\mu_f$ may be sub-optimal for the R-R model. Traditionally, such parameters are tuned using regression error on a validation dataset. In Figure~\ref{fig:mu_vs_reg} we plot the mean regression error $\|\hat{f} + \hat{B}u - \dot{x} \|$ over an independently sampled validation dataset of 2000 demonstration tuples, as a function of the regularization parameter $\mu_f$, for all  R-R models.
\begin{figure}[h]
    \centering
    \includegraphics[width=0.8\textwidth,clip]{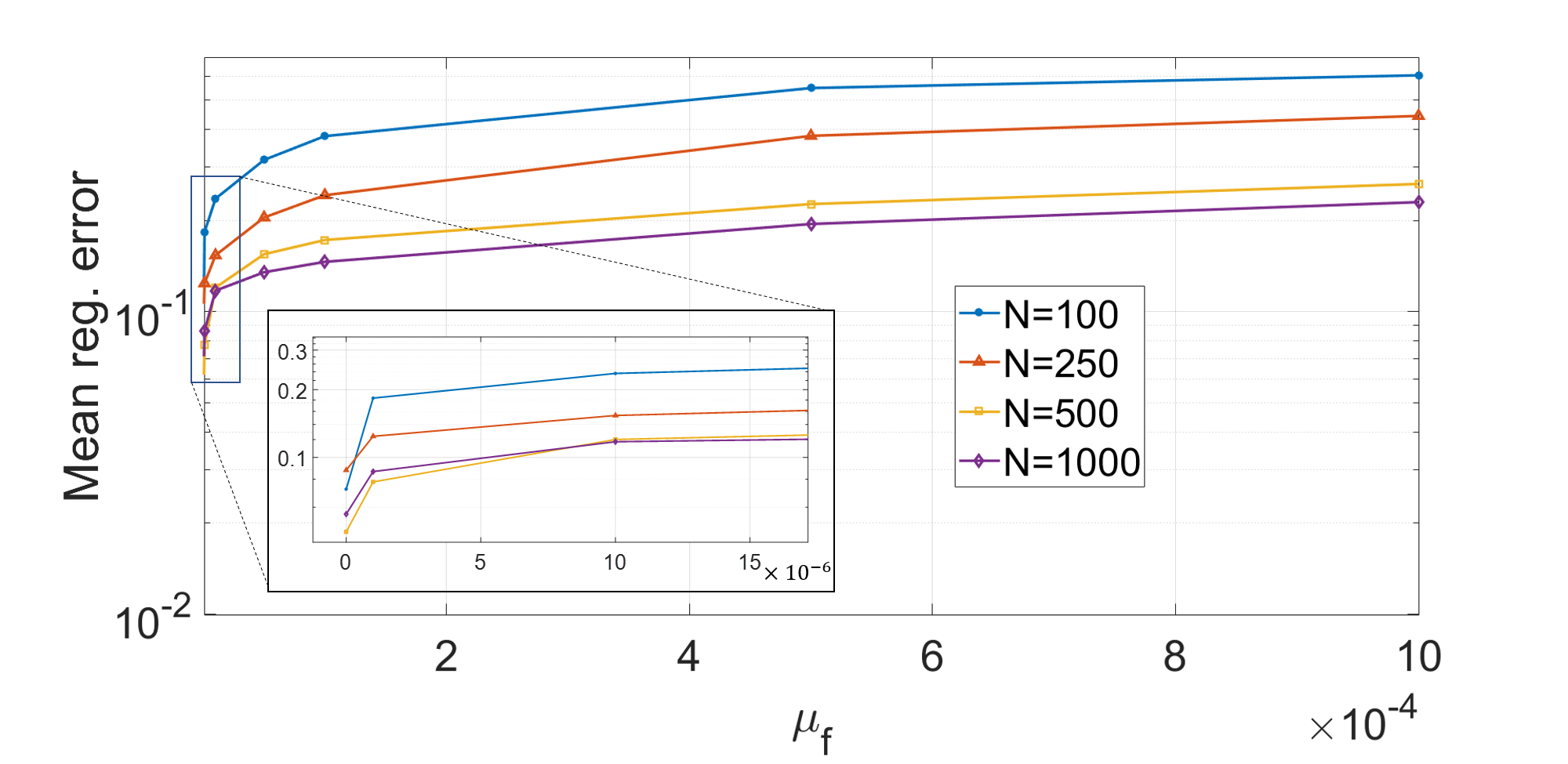}
    \caption{Mean regression error over an independent validation dataset as a function of $\mu_f$ for the RR model learned using~\eqref{rr_opt}, with varying training set size $N$. The best out of sample performance is achieved with $\mu_f = 0.0$. The constant $\mu_b$ is fixed at $10^{-6}$. }
        \label{fig:mu_vs_reg}
\end{figure}

The plot illustrates that the best out-of-sample performance is achieved with $\mu_f = 0.0$. However, this corresponds to the N-R model which, as we learned in the previous section, generated several unstable trajectories for all training dataset sizes. This is not a surprising result; the feature mapping used as the basis for the dynamics model corresponds to the randomized matrix approximation of a reproducing kernel (see Section~\ref{sec:feat}). Recent results, as in~\citep{LiangRakhlin2018} and references within, corroborate such a pattern and even advocate for ``ridgeless'' regression. 

Given that the CCM-R model uses the \emph{same} feature mapping as the R-R and N-R models (i.e., the model capacity of all three models is the same), and is given the same set of demonstration tuples, it appears that traditional model-fitting techniques such as ridge-regression and associated hyper-parameter tuning rules are ill-suited to learn representations of dynamics that are appropriate for planning and control. This motivates the need for \emph{constrained} dynamics learning, where the notion of \emph{model stabilizability} is encoded as a constraint within the learning algorithm (as opposed to the unconstrained optimization in~\eqref{rr_opt}). In the next section, we introduce conditions for nonlinear trajectory stabilizability which can be encoded as algebraic constraints within a model learning algorithm to \emph{prune the hypothesis space in a manner that is tailored to the downstream task of trajectory generation and feedback control.}

\section{Review of Contraction Theory} \label{sec:ccms}

The core principle behind contraction theory~\citep{LohmillerSlotine1998} is to study the evolution of distance between any two \emph{arbitrarily close} neighboring trajectories and draw conclusions upon the distance between \emph{any finitely apart} pair of trajectories.  Given an autonomous system of the form: $\dot{x}(t) = f(x(t))$, consider two neighboring trajectories separated by an infinitesimal (virtual) displacement $\delta_x$; formally, $\delta_x$ is a vector in the tangent space $\mathcal{T}_x \X$ at $x$. The dynamics of this virtual displacement are given by:
\[
    \dot{\delta}_x = \dfrac{\partial f}{\partial x} \delta_x,
\]
where $\partial f/\partial x$ is the Jacobian of $f$. The dynamics of the infinitesimal squared distance $\delta_x^T\delta_x$ between these two trajectories is then given by:
\[
    \dfrac{d}{dt}\left( \delta_x ^T \delta_x \right) = 2 \delta_x ^T \dfrac{\partial f}{\partial x} \delta_x.
\]
Then, if the (symmetric part) of the Jacobian matrix $\partial f/\partial x$ is \emph{uniformly} negative definite, i.e., 
\[
    \sup_{x} \bar{\lambda}\left(\dfrac{1}{2}\wwidehat{\dfrac{\partial f(x)}{\partial x}}\right) \leq -\lambda < 0,
\]
where $\wwidehat{(\cdot)} := (\cdot) + (\cdot)^T$, $\lambda > 0$, one has that the squared infinitesimal length $\delta_x^T\delta_x$ is exponentially convergent to zero at rate $2\lambda$. By path integration of $\delta_x$ between \emph{any} pair of trajectories, one has that the distance between any two trajectories shrinks exponentially to zero. The vector field $f$ is thereby referred to be \emph{contracting at rate $\lambda$}, and $\lambda$ is referred to as the \emph{contraction rate}.

Contraction metrics generalize this observation by considering as infinitesimal squared length distance, a symmetric positive definite function $V(x,\delta_x) = \delta_x^T M(x)\delta_x$, where $M: \X \rightarrow \Sjpp_n$ is a mapping from $\X$ to the set of uniformly positive definite $n\times n$ symmetric matrices. Formally, $M(x)$ may be interpreted as a Riemannian metric tensor, endowing the space $\X$ with the Riemannian squared length element $V(x,\delta_x)$. A fundamental result in contraction theory~\citep{LohmillerSlotine1998} is that \emph{any} contracting system admits a contraction metric $M(x)$ such that the associated function $V(x,\delta_x)$ satisfies:
\[
    \dot{V}(x,\delta_x) \leq - 2\lambda V(x,\delta_x), \quad \forall (x,\delta_x) \in \mathcal{T}\X,
\]
for some positive contraction rate $\lambda$. Thus, the function $V(x,\delta_x)$ may be interpreted as a \emph{differential Lyapunov function}. 

\subsection{Control Contraction Metrics}

Control contraction metrics (CCMs) generalize contraction analysis to the controlled dynamical setting, in the sense that the analysis searches \emph{jointly} for a controller design and the metric that describes the contraction properties of the resulting closed-loop system. Consider a control affine dynamical system of the form in~\eqref{cntrl_affine}. To define a CCM, analogously to the previous section, we first analyze the variational dynamics, i.e., the dynamics of an infinitesimal displacement $\delta_x$:
\begin{equation}
	\ddx= \overbrace{\bigg(\dfrac{\partial f(x)}{\partial x}  + \sum_{j=1}^m u^j \dfrac{\partial b_j(x)}{\partial x}\bigg)}^{:= A(x,u)}\delta_{x}+ B(x)\delta_{u},
\label{var_dyn_c}
\end{equation}
where $\delta_u$ is an infinitesimal (virtual) control vector at $u$ (i.e., $\delta_u$ is a vector in the control input tangent space, i.e., $\reals^m$). A uniformly positive definite matrix-valued function $M(x)$ is a CCM for the system $\{f,B\}$ if there exists a function $\delta_u(x,\dx,u)$ such that the function $V(x,\dx) = \dx^T M(x) \dx$ satisfies
\begin{equation}
\begin{split}
    \dot{V}(x,\dx,u) &= \delta_{x}^{T}\left(\partial_{f+Bu}M(x)+ \wwidehat{M(x)A(x,u)} \right) \delta_{x} + 2 \delta_{x}^{T}M(x)B(x)\delta_{u} \\
    &\leq -2\lambda V(x,\dx) \quad \forall (x,\dx) \in \mathcal{T}\X,\ u \in \U.
\end{split}
\label{V_dot}
\end{equation}
Given the existence of a CCM, one can then construct an exponentially stabilizing (in the sense of~\eqref{exp_stab}) feedback controller $k(x^*,x)$ as described in Appendix~\ref{ccm_appendix}.

Some important observations are in order. First, the function $V(x,\dx)$ may be interpreted as a differential CLF, in that there exists a stabilizing differential controller $\delta_u$ that stabilizes the variational dynamics~\eqref{var_dyn_c} in the sense of~\eqref{V_dot}. Second, and more importantly, we see that by stabilizing the variational dynamics (essentially an infinite family of linear dynamics in $(\delta_x,\delta_u)$) pointwise, everywhere in the state-space, we obtain a stabilizing controller for the original nonlinear system. Crucially, this is an exact stabilization result, not one based on local linearization-based control. Consequently, one can show several useful properties, such as invariance to state-space transformations~\citep{ManchesterSlotine2017} and robustness~\citep{SinghMajumdarEtAl2017,ManchesterSlotine2018}.  Third, the CCM approach only requires a weak form of controllability, and therefore is not restricted to feedback linearizable (i.e., invertible) systems. 

\section{CCM Constrained Dynamics Learning}\label{sec:prob}

Leveraging the characterization of stabilizability via CCMs, we can now formalize our dynamics learning problem as follows. Given a supervised dataset of demonstration tuples $\{(\xs,\us,\dot{x}_i)\}_{i=1}^{N}$, we wish to learn the dynamics functions $f(x)$ and $B(x)$ in~\eqref{cntrl_affine}, subject to the constraint that there exists a CCM $M(x)$ for the learned dynamics. \revision{That is, the CCM $M(x)$ plays the role of a \emph{certificate} of stabilizability for the learned dynamics.}

As shown in~\citep{ManchesterSlotine2017}, a necessary and sufficient characterization of a CCM $M(x)$ for the system $\{\hat{f},\hat{B}\}$ is given in terms of its dual $W(x):= M(x)^{-1}$ by the following two conditions:
\begin{align}
	 \hat{B}_{\perp}^{T}\left( \partial_{\hat{b}_j}W(x) - \wwidehat{\dfrac{\partial \hat{b}_j(x)}{\partial x}W(x)} \right)\hat{B}_{\perp}= 0, \ j = 1,\ldots, m \quad &\forall x \in \X,
\label{killing_A} \\
	   \underbrace{\hat{B}_{\perp}(x)^{T}\left(-\partial_{\hat{f}}W(x) + \wwidehat{\dfrac{\partial \hat{f}(x)}{\partial x}W(x)} + 2\lambda W(x) \right)\hat{B}_{\perp}(x)}_{:=\Fl(x;\hat{f},W)} \prec 0, \quad &\forall x \in \X, \label{nat_contraction_W}
\end{align}
where $\hat{B}_{\perp}$ is the annihilator matrix for $\hat{B}$, i.e., $\hat{B}(x)^T \hat{B}_\perp(x) = 0$ for all $x$. In the definition above, we write $\Fl(x;\hat{f},W)$ since $\{\hat{f},W\}$ will be optimization variables in our formulation while $\lambda$ is treated as a hyper-parameter. Thus, the learning task reduces to finding the functions $\{\hat{f},\hat{B},W\}$ that jointly satisfy the above constraints, while minimizing an appropriate regularized regression loss function. Formally, problem~\eqref{prob_gen} can be re-stated as:
\begin{subequations}\label{prob_gen2}
\begin{align}
&\min_{\substack{\hat{f} \in \mathcal{H}^{f}, \hat{b}_j \in \mathcal{H}^{B}, j =1,\ldots,m \\ W \in \mathcal{H}^W \\ \wl, \wu \in \reals_{>0}}} && \overbrace{\sum_{i=1}^{N} \left\| \hat{f}(\xs) + \hat{B}(\xs) \us - \dot{x}_i \right\|^2  + \mu_f \| \hat{f} \|^2_{\mathcal{H}^f} + \mu_b \sum_{j=1}^{m} \| \hat{b}_j \|^2_{\mathcal{H}^B}}^{:= J_d(\hat{f},\hat{B})} + \nonumber \\
& \qquad && + \underbrace{(\wu-\wl) +  \mu_w \|W\|^2_{\mathcal{H}^W}}_{:=J_m(W,\wl,\wu)}  \\
&\qquad \qquad \text{s.t.} && \text{eqs.~\eqref{killing_A},~\eqref{nat_contraction_W}} \quad \forall x \in \X, \\
& && \wl I_n \preceq W(x) \preceq \wu I_n \quad \forall x \in \X, \label{W_unif}
\end{align}
\end{subequations}
where $\mathcal{H}^f$ and $\mathcal{H}^B$ are appropriately chosen $\Y$-valued function classes on $\X$ for $\hat{f}$ and $\{\hat{b}_j\}_{j=1}^{m}$ respectively, and $\mathcal{H}^W$ is a suitable $\Sjpp_n$-valued function space on $\X$. The objective is composed of a dynamics term $J_d$ -- consisting of regression loss and regularization terms, and a metric term $J_m$ -- consisting of a condition number surrogate loss on the metric $W(x)$ and a regularization term. The metric cost term $\wu-\wl$ is motivated by the observation that the state tracking error (i.e., $\|x(t)-x^*(t)\|^2$) in the presence of bounded additive disturbances is proportional to the ratio $\wu/\wl$; see~\citep{SinghMajumdarEtAl2017}.

Notice that the coupling constraint~\eqref{nat_contraction_W} is a bi-linear matrix inequality in the decision variables  $\hat{f}$ and $W$. Thus, at a high-level, a solution algorithm must consist of alternating between two convex sub-problems, defined by the objective and decision variable pairs $(J_d, \{\hat{f},\hat{B},\lambda\})$ and $(J_m, \{W,\wl,\wu\})$. We refer to the first sub-problem as the \emph{dynamics} sub-problem, and the second as the \emph{metric} sub-problem. These sub-problems are described in full in Section~\ref{sec:soln}. 

\begin{remark} 
While one may include $\lambda$ as part of the overall optimization problem; indeed this is the case in the robust planning setting in~\citep{SinghMajumdarEtAl2017}, doing so would require either (i) adding $-\lambda$ to the dynamics sub-problem cost function, or (ii) line-searching over $\lambda$ within the metric sub-problem. The first option detracts from the primary objective of the dynamics sub-problem, which is to minimize regression error, while the second option induces needless complexity. Thus, in this work, $\lambda$ is held fixed as a tuning parameter.
\end{remark}

\section{CCM Regularized Dynamics Learning}\label{sec:reg}

When performing dynamics learning on a system that is a priori \emph{known} to be exponentially stabilizable with rate at least $\lambda$, the constrained problem formulation in~\eqref{prob_gen2} follows naturally given the assured \emph{existence} of a CCM. Albeit, the infinite-dimensional nature of the constraints is a considerable technical challenge, falling under the class of \emph{semi-infinite} optimization~\citep{HettichKortanek1993}. Alternatively, for systems that are not globally exponentially stabilizable in $\X$, one can imagine that such a constrained formulation may lead to adverse bias effects on the learned dynamics model. 

Thus, in this section we propose a relaxation of problem~\eqref{prob_gen2} motivated by the concept of regularization. Specifically, constraints~\eqref{killing_A} and~\eqref{nat_contraction_W} capture this notion of stability of infinitesimal deviations \emph{at all points} in the space $\X$. They stem from requiring that $\dot{V} \leq -2\lambda V(x,\dx)$ in~\eqref{V_dot} when $\dx^T M(x) B(x) = 0$, i.e., when $\delta_u$ can have no effect on $\dot{V}$. This is nothing but the standard control Lyapunov inequality, applied to the differential setting. Constraint~\eqref{killing_A} sets to zero, the terms in~\eqref{V_dot} affine in $u$, while constraint~\eqref{nat_contraction_W} enforces this ``natural" stability condition. 

The simplifications we make are (i) relax constraints~\eqref{nat_contraction_W} and~\eqref{W_unif} to hold pointwise over some \emph{finite} constraint set $X_c \in \X$, and (ii) assume a specific sparsity structure for the input matrix estimate $\hat{B}(x)$. We discuss the pointwise relaxation here; the sparsity assumption on $\hat{B}(x)$ is discussed in the following section.

First, from a purely mathematical standpoint, the pointwise relaxation of~\eqref{nat_contraction_W} and \eqref{W_unif} is motivated by the observation that, as the CCM-based controller is exponentially stabilizing, we only require the differential stability condition to hold locally (in a tube-like region) with respect to the provided demonstrations. By continuity of eigenvalues for continuously parameterized entries of a matrix, it is sufficient to enforce the matrix inequalities at a sampled set of points~\citep{Lax2007}.

Second, enforcing the existence of such an ``approximate" CCM seems to have an impressive regularization effect on the learned dynamics that is more meaningful than standard regularization techniques used in for instance, ridge or lasso regression. Specifically, problem~\eqref{prob_gen2}, and more generally, problem~\eqref{prob_gen} can be viewed as the \emph{projection} of the best-fit dynamics onto the set of stabilizable systems. This results in dynamics models that jointly balance regression performance and stabilizablity, ultimately yielding systems whose generated trajectories are notably easier to track. This effect of regularization is apparent in the simulation results presented in Section~\ref{sec:pvtol_sim}, and the hardware testbed results in Section~\ref{sec:pvtol_exp}. 

\revision{Practically, the finite constraint set $X_c$, with cardinality $N_c$, includes all $\xs$ in the supervised regression training set of $\{(\xs,\us,\dot{x}_i)\}_{i=1}^{N}$ tuples. However, as the LMI constraints are \emph{independent} of $\us$ and $\dot{x}_i$, the set $X_c$ is chosen as a strict superset of $\{\xs\}_{i=1}^{N}$ (i.e., $N_c > N$) by randomly sampling additional points within $\X$, drawing parallels with \emph{semi-supervised learning}.}

\subsection{Sparsity of Input Matrix} 
\label{sec:B_simp}

While a pointwise relaxation for the matrix inequalities is reasonable, one cannot apply such a relaxation to the exact equality condition in~\eqref{killing_A}. Thus, the second simplification made is the following assumption, reminiscent of control normal form equations.
\begin{assumption}\label{ass:B_simp}
Assume $\hat{B}(x)$ to take the following sparse representation:
\begin{equation}
    \hat{B}(x) = \begin{bmatrix} O_{(n-m)\times m} \\ \bs(x) \end{bmatrix},
\label{B_simp}
\end{equation}
where $\bs(x)$ is an invertible $m\times m$ matrix for all $x\in \X$. 
\end{assumption}
For the assumed structure of $\hat{B}(x)$, a valid $\hat{B}_{\perp}$ matrix is then given by:
\begin{equation}
    \hat{B}_{\perp} = \begin{bmatrix} I_{n - m} \\ O_{m \times (n-m)} \end{bmatrix},
    \label{B_perp}
\end{equation}
where $I$ and $O$ are the identity and zero matrices respectively. Therefore, constraint~\eqref{killing_A} simply becomes:
\[
	\partial_{\hat{b}_j} W_{\perp} (x) = 0, \quad j = 1,\ldots,m,
\]
where $W_{\perp}$ is the upper-left $(n-m)\times (n-m)$ block of $W(x)$. Assembling these constraints for the $(p,q)$ entry of $W_{\perp}$, i.e., $w_{\perp_{pq}}$, we obtain:
\[
	 \begin{bmatrix} \dfrac{ \partial w_{\perp_{pq}} (x) }{\partial x^{(n-m)+1}} & \cdots & \dfrac{\partial w_{\perp_{pq}} (x) }{\partial x^{n}} \end{bmatrix} \bs(x) = 0.
\]
Since the matrix $\bs(x)$ in~\eqref{B_simp} is assumed to be invertible, the \emph{only} solution to this equation is $\partial w_{\perp_{pq}}/ \partial x^i = 0$ for $i \in \{(n-m)+1,\ldots,n\}$, and all $(p,q) \in \{1,\ldots,(n-m)\}$. That is, $W_{\perp}$ cannot be a function of the last $m$ components of $x$ -- an elegant simplification of constraint~\eqref{killing_A}. 

Let us justify the structural assumption in~\eqref{B_simp}. Physically, this assumption is not as mysterious as it appears. Indeed, consider the standard dynamics form for mechanical systems:
\[
	H(q) \ddot{q} + C(q,\dot{q}) \dot{q} + g(q) = B(q) u,
\]
where $q \in \reals^{n_q}$ is the configuration vector, $H \in \Sjpp_{n_q}$ is the inertia matrix, $C(q,\dot{q})\dot{q}$ contains the centrifugal and Coriolis terms, $g$ are the gravitational terms, $B \in \reals^{n_q \times m}$ is the (full rank) input matrix mapping, and $u \in \reals^{m}$ is the input; for fully actuated systems, $\mathrm{rank}(B) = m = n_q$. For underactuated systems, $m < n_q$. By rearranging the configuration vector~\citep{Spong1998,Olfati-Saber2001,ReyhanogluSchaftEtAl1999}, one can partition $q$ as $(q_u, q_a)$ where $q_u \in \reals^{n_q - m}$ represents the unactuated degrees of freedom and $q_a \in \reals^{m}$ represents the actuated degrees of freedom. Applying this partitioning to the dynamics equation above yields:
\[
	\begin{bmatrix} H_{uu} (q) & H_{ua}(q) \\ H_{ua}(q) & H_{aa} (q)\end{bmatrix} \begin{bmatrix} \ddot{q}_u \\ \ddot{q}_a \end{bmatrix} + \begin{bmatrix} \tau_u(q,\dot{q}) \\ \tau_a (q,\dot{q}) \end{bmatrix} = \begin{bmatrix} O_{(n_q-m)\times m} \\ \bs(q) \end{bmatrix} u,
\]
where $\bs \in \reals^{m \times m}$ is an invertible square matrix. As observed in~\citep{ReyhanogluSchaftEtAl1999}, a substantial class of underactuated systems can be represented in this manner. Of course, fully actuated systems (i.e., $m=n_q$) also take this form. Thus, by taking as state $x = (q, p) \in \reals^{n}$ where $p = H(q) \dot{q}$ is momentum (so that $n = 2n_q$), the dynamics can be written as:
\begin{equation}
	\dot{x} = \begin{bmatrix} \dot{q} \\ \dot{p} \end{bmatrix} = \begin{bmatrix} H^{-1} (q) p \\ \dot{H}(q) \dot{q} - \tau(q,\dot{q}) \end{bmatrix}  + \begin{bmatrix} O_{(n-m) \times m} \\ \bs(q) \end{bmatrix}u.
\label{under_a}
\end{equation}
Notice that the input matrix takes the desired normal form in~\eqref{B_simp}. To address the apparent difficult of working with the state representation of $(q,p)$ (when usually only measurements of $(q,\dot{q},\ddot{q})$ are typically available \emph{and} the inertia matrix $H(q)$ is unknown), one may leverage the following result from~\citep{ManchesterSlotine2017}:

\begin{theorem}[CCM Invariance to Diffeomorphisms]\label{thm:ccm_inv}
Suppose there exists a valid CCM $M_x(x)$ with respect to the state $x$. Then, if $z = \psi(x)$ is a diffeomorphism, then the CCM conditions also hold with respect to state $z$ with metric $M_z(z) = \Psi(z)^{-T}M_x(x)\Psi(z)^{-1}$, where $\Psi(z) = \partial \psi(x)/\partial x$ is evaluated at $x = \psi^{-1}(z)$.
\end{theorem}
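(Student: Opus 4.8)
The plan is to avoid transforming the dual algebraic conditions \eqref{killing_A}--\eqref{nat_contraction_W} entry by entry, and instead exploit the intrinsic, coordinate-free character of the differential Lyapunov inequality \eqref{V_dot}. First I would record how the problem data transform under $z = \psi(x)$. Differentiating $z=\psi(x)$ along trajectories gives $\dot z = \Psi \dot x$, so the control-affine system $\dot x = f(x)+B(x)u$ becomes $\dot z = \tilde f(z) + \tilde B(z) u$ with $\tilde f(z) = \Psi\, f(\psi^{-1}(z))$ and $\tilde B(z) = \Psi\, B(\psi^{-1}(z))$, where $\Psi$ is evaluated at $x=\psi^{-1}(z)$; crucially, $u$ and the virtual control $\delta_u$ both live in $\reals^m$ and are left unchanged, while the infinitesimal displacement transforms as the pushforward $\delta_z = \Psi\,\delta_x$.

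The first key computation is that the proposed metric $M_z(z) = \Psi^{-T}M_x(x)\Psi^{-1}$ makes the differential length element invariant: substituting $\delta_z = \Psi\delta_x$,
\[
\delta_z^T M_z(z)\,\delta_z = \delta_x^T \Psi^T \Psi^{-T} M_x(x)\Psi^{-1}\Psi\,\delta_x = \delta_x^T M_x(x)\,\delta_x,
\]
so that $V_z(z,\delta_z) = V_x(x,\delta_x)$ as functions on the tangent bundle; dually, $W_z := M_z^{-1} = \Psi W_x \Psi^T$.

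The central step is to show the inequality $\dot V \le -2\lambda V$ transfers verbatim. For this I would invoke the fact that the tangent lift of the flow equals the flow of the tangent lift: because $\psi$ conjugates the two systems, if $x(t)$ evolves under the closed-loop (actual-plus-variational) dynamics with control $u(t)$ and differential controller $\delta_u$, then $z(t)=\psi(x(t))$ and $\delta_z(t)=\Psi(x(t))\delta_x(t)$ solve the transformed variational system under the \emph{same} $u(t),\delta_u$. Hence $V_z(z(t),\delta_z(t)) \equiv V_x(x(t),\delta_x(t))$ along corresponding trajectories, and differentiating in $t$ yields $\dot V_z = \dot V_x$ pointwise. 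Since $M_x$ is a valid CCM, some $\delta_u$ achieves $\dot V_x \le -2\lambda V_x$; reusing that $\delta_u$ and invoking $V_z=V_x$ gives $\dot V_z \le -2\lambda V_z$, so $M_z$ is a CCM for $\{\tilde f,\tilde B\}$. Finally, because \eqref{killing_A}--\eqref{nat_contraction_W} are a necessary-and-sufficient restatement of this differential inequality (per \citep{ManchesterSlotine2017}), they hold for the transformed system with dual metric $W_z$.

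The step I expect to be the main obstacle is rigorously justifying $\dot V_z = \dot V_x$, since $\Psi$ is state- and hence trajectory-dependent and $\dot\delta_z = \dot\Psi\,\delta_x + \Psi\dot\delta_x$ mixes the time variation of the Jacobian with the variational dynamics. One must confirm that these terms reorganize exactly into the variational dynamics of $\{\tilde f,\tilde B\}$ --- equivalently, that the transformed variational Jacobian is $\tilde A = (\dot\Psi + \Psi A)\Psi^{-1}$ --- with the Hessian-of-$\psi$ contributions cancelling rather than leaving a residual. The clean route around this chain-rule bookkeeping is the geometric observation that the tangent map $T\psi$ intertwines the two variational vector fields, which makes $\dot V$ the Lie derivative of the invariant function $V$ along a pushed-forward field and therefore automatically invariant; verifying this intertwining is the technical crux.
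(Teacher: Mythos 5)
Your proof is correct, but note that the paper itself does not actually prove Theorem~\ref{thm:ccm_inv}: it imports the result by citation from \citep{ManchesterSlotine2017}, so there is no in-paper argument to compare against, and your self-contained derivation is a genuine addition. Your coordinate-free route --- pushing forward the displacement $\delta_z = \Psi\,\delta_x$, observing that $M_z = \Psi^{-T}M_x\Psi^{-1}$ renders $V$ invariant on the tangent bundle, and reusing the same $\delta_u$ --- is the natural argument, and the ``technical crux'' you flag does check out. Concretely, since $\tilde f = \Psi f$ and $\tilde b_j = \Psi b_j$ (composed with $\psi^{-1}$), the chain rule gives
\[
\tilde A(z,u) \;=\; \left[\frac{\partial \Psi}{\partial x}\cdot\bigl(f(x)+B(x)u\bigr) \;+\; \Psi A(x,u)\right]\Psi^{-1} \;=\; \bigl(\dot\Psi + \Psi A\bigr)\Psi^{-1},
\]
where the Hessian-of-$\psi$ contributions do not cancel but rather assemble exactly into $\dot\Psi$, the derivative of $\Psi$ along the flow; hence
\[
\tilde A\,\delta_z + \tilde B\,\delta_u \;=\; \dot\Psi\,\delta_x + \Psi\bigl(A\,\delta_x + B\,\delta_u\bigr) \;=\; \frac{d}{dt}\bigl(\Psi\,\delta_x\bigr),
\]
confirming that $(z,\delta_z)=(\psi(x),\Psi\delta_x)$ solves the transformed variational system whenever $(x,\delta_x)$ solves the original one, with the same $u$ and $\delta_u$. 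With this intertwining in hand, $V_z \equiv V_x$ and $\dot V_z \equiv \dot V_x$ along corresponding trajectories, so $\dot V_z \le -2\lambda V_z$, and the equivalence of the differential CLF condition \eqref{V_dot} with the dual conditions \eqref{killing_A}--\eqref{nat_contraction_W} (which is what \citep{ManchesterSlotine2017} establishes) transfers the conclusion to $W_z = \Psi W_x \Psi^T$. Two minor points worth making explicit: (i) the reused differential controller must be re-expressed in the new variables as $\delta_u(z,\delta_z,u) := \delta_u\bigl(\psi^{-1}(z),\Psi^{-1}\delta_z,u\bigr)$, which is legitimate precisely because $\Psi$ is invertible; and (ii) \emph{uniform} positive definiteness of $M_z$ additionally requires $\Psi$ and $\Psi^{-1}$ to be bounded on $\X$, which holds for a smooth diffeomorphism on the open, bounded domain assumed in the paper but should be stated.
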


Thus, for the (substantial) class of underactuated systems of the form~\eqref{under_a}, one would solve problem~\eqref{prob_gen2} by fitting the dynamics terms $\{H,\tau,b\}$ using the state representation $x = (q,\dot{q})$, and leveraging Theorem~\ref{thm:ccm_inv} and the previous estimate of $H$ to enforce the matrix inequality constraints using the state representation $(q,p)$. This allows us to borrow several existing results from adaptive control on estimating mechanical system by leveraging the known linearity of $H(q)$ in terms of unknown mass property parameters multiplying known physical basis functions. We leave this extension however, to future work, and from hereon assume the structural form given in~\eqref{B_perp} for the estimated input matrix. 

\subsection{Finite-dimensional Optimization}\label{sec:finite}

We now present a tractable finite-dimensional optimization for solving problem~\eqref{prob_gen2} under the two simplifying assumptions \revision{introduced in the previous sections}. For ease of presentation, we outline the final optimization formulation here and provide the derivation, which relies extensively on vector-valued Reproducing Kernel Hilbert Spaces (RKHS), in the following section. 

\medskip

\noindent {\bf Parametrization:} Model $\hat{f}, \{\hat{b}_j\}_{j=1}^{m}$ and $\{w_{ij}\}_{i,j=1}^{n}$ as a linear combination of features. That is, 
\begin{align}
    \hat{f}(x) &= \Phi_f(x)^T \alpha, \label{param_1}\\
    \hat{b}_j(x) &= \Phi_b(x)^T \beta_j  \quad j \in \{1,\ldots, m\}, \\
    w_{ij}(x) &= \begin{cases} \hat{\phi}_w(x)^T \hat{\theta}_{ij} &\text{ if }\quad  (i,j) \in \{1,\ldots,n-m\}, \\
    \phi_w(x)^T \theta_{ij} &\text{ else}, \label{param_2}
    \end{cases}
\end{align}
where $\alpha \in \reals^{d_f}$, $\beta_j \in \reals^{d_b}$, $\hat{\theta}_{ij}, \theta_{ij} \in \reals^{d_w}$ are constant vectors to be optimized over, and $\Phi_f : \X \rightarrow \reals^{d_f\times n}$, $\Phi_b : \X \rightarrow \reals^{d_b \times n}$, $\hat{\phi}_w : \X \rightarrow \reals^{d_w}$ and $\phi_w : \X \rightarrow \reals^{d_w}$ are a priori chosen feature mappings. To enforce the sparsity structure in~\eqref{B_simp}, the feature matrix $\Phi_b$ must have all 0s in its first $n-m$ columns. The features $\hat{\phi}_w$ are distinct from $\phi_w$ in that the former are only a function of the first $n-m$ components of $x$ (as per Section~\ref{sec:B_simp}).
While one can use any function approximator (e.g., neural nets), we motivate this parameterization from a perspective of RKHS -- see Section~\ref{sec:deriv}.

\medskip 

\noindent {\bf Optimization:} Given positive regularization constants $\mu_f, \mu_b, \mu_w$ and positive tolerances $\epsilon_\lambda, \delta_{\wl}, \epsilon_{\wl}$, solve:
\begin{subequations}\label{learn_finite}
\begin{alignat}{2}
    &\min_{\alpha,\beta_j, \hat{\theta}_{ij}, \theta_{ij}, \wl, \wu} \quad   &&\overbrace{\sum_{i=1}^{N} \| \hat{f}(\xs)+\hat{B}(\xs)u_i - \dot{x}_i \|^2 + \mu_f \|\alpha\|^2 + \mu_b \sum_{j=1}^{m} \|\beta_j\|^2}^{:=\hat{J}_d}  \nonumber \\
    & \quad && \quad + \underbrace{(\wu-\wl) +  \mu_w\sum_{i,j} \|\tilde{\theta}_{ij}\|^2}_{:=\hat{J}_m}  \\
    &\qquad \qquad \text{s.t.} \quad && \mathcal{F}_{\lambda+\epsilon_{\lambda}}(\xs;\alpha,\tilde{\theta}_{ij}) \preceq 0 \quad \forall \xs \in X_c, \label{nat_finite} \\
    &\quad && (\wl + \epsilon_{\wl})I_{n} \preceq W(\xs) \preceq \wu I_n \quad \forall \xs \in X_c, \label{uniform_finite} \\
    &\quad && \theta_{ij} = \theta_{ji}, \  \hat{\theta}_{ij} = \hat{\theta}_{ji} \label{sym_finite} \\
    &\quad && \wl \geq \delta_{\wl}, \label{tol_finite}
\end{alignat}
\end{subequations}
where $\tilde{\theta}_{ij}$ is used as a placeholder for $\theta_{ij}$ and $\hat{\theta}_{ij}$ to simplify notation, and we use $\hat{J}_d$ and $\hat{J}_m$ to distinguish them from the functional equivalents $J_d$ and $J_m$ in problem~\eqref{prob_gen2}. We wish to highlight the following key points regarding problem~\eqref{learn_finite}. Constraints~\eqref{nat_finite} and~\eqref{uniform_finite} are the pointwise relaxations of~\eqref{nat_contraction_W} and~\eqref{W_unif} respectively;~\eqref{sym_finite} enforces the symmetry of $W(x)$, and~\eqref{tol_finite} imposes some tolerance requirements to ensure a well conditioned solution. Additionally, the tolerances $\epsilon_{\delta}$ and $\epsilon_{\wl}$ are used to account for the pointwise relaxations of the matrix inequalities. A key challenge is to efficiently solve this constrained optimization problem, given a potentially large number of constraint points in $X_c$. In Section~\ref{sec:soln}, we present an iterative algorithm and an adaptive constraint sampling technique to solve problem~\eqref{learn_finite}.

\section{Derivation of Problem~\eqref{learn_finite}}\label{sec:deriv}

To go from the general problem definition in~\eqref{prob_gen2} to the finite dimensional problem in~\eqref{learn_finite}, we first must define appropriate function classes for $\hat{f}$, $\hat{b}_j$, and $W$. We will do this using the framework of RKHS. We first provide a brief introduction to RKHS theory. 

\subsection{Reproducing Kernel Hilbert Spaces}

{\bf Scalar-valued RKHS}: Kernel methods~\citep{ScholkoepfSmola2001} constitute a broad family of non-parametric modeling techniques for solving a range of problems in machine learning. A scalar-valued positive definite kernel function $k : \X \times \X \mapsto \reals$ generates a RKHS $\mathcal{H}_k$ of functions, with the nice property that if two functions are close in the distance derived from the norm (associated with the Hilbert space), then their pointwise evaluations are close at all points. This continuity of evaluation functionals has the far-reaching consequence that norm-regularized learning problems over RKHSs admit finite dimensional solutions via Representer Theorems. The kernel $k$ may be (non-uniquely) associated with a higher-dimensional embedding of the input space via a feature map, $\phi: \reals^n \mapsto \reals^D$, such that $k(x, z) = \phi(x) ^T \phi(z)$, where $D$ is infinite for universal kernels associated with RKHSs that are dense in the space of square-integrable functions. Standard regularized linear statistical models in the embedding space implicitly provide nonlinear inference with respect to the original input representation.  In a nutshell, kernel methods provide a rigorous algorithmic framework for deriving nonlinear counterparts of a whole array of linear statistical techniques, e.g., for classification, regression, dimensionality reduction, and unsupervised learning. For further details, we point the reader to~\citep{HearstDumaisEtAl1998}. 

{\bf Vector-valued RKHS}: Dynamics estimation is a vector-valued learning problem. Such problems can be naturally formulated in terms of  vector-valued generalizations of RKHS concepts. The theory and formalism of vector-valued RKHS can be traced as far back as the work of Laurent Schwartz~\citep{Schwartz1964}, with applications ranging from solving partial differential equations to machine learning. Informally, we say that that ${\cal H}$ is an RKHS of $\reals^n$-valued maps if for any $v\in \reals^n$, the linear functional that maps $f \in {\cal H}$ to $v^T f(x)$ is continuous.

More formally, denote the standard inner product on $\Y$ as $\ip{\cdot}{\cdot}$, and let $\Y(\X)$ be the vector space of all functions $f : \X \rightarrow \Y$ and let $\Lin(\Y)$ be the space of all bounded linear operators on $\Y$, i.e., $n\times n$ matrices. A function $K : \X \times \X \rightarrow \Lin(\Y)$ is an \emph{operator-valued positive definite kernel} if for all $(x,z) \in \X \times \X$: $K(x,z)^T = K(z,x)$, and for all finite set of points $\{x_i\}_{i=1}^{N} \in \X$ and $\{y_i\}_{i=1}^{N} \in \Y$, 
\[
	\sum_{i,j=1}^{N} \ip{y_i}{K(x_i,x_j)y_j} \geq 0.
\]
Given such a $K$, we define the unique $\Y$-valued RKHS $\Hk \subset \Y(\X)$ with reproducing kernel $K$ as follows. For each $x \in \X$ and $y\in \Y$, define the function $K_{x} y = K(\cdot, x) y \in \Y(\X)$. That is, $K_x y (z) = K(z,x) y$ for all $z \in \X$. Then, the Hilbert space $\Hk$ is defined to be the completion of the linear span of functions $\{K_x y \ | \ x \in \X, y \in \Y\}$ with inner product $\langle\cdot,\cdot\rangle_{\Hk}$ between functions $f  = \sum_{i=1}^{N} K_{x_i} y_i,\ g = \sum_{j=1}^{N'} K_{z_j} w_j \in \Hk$, defined as:
\[
	\ip{f}{g}_{\Hk} = \sum_{i=1}^{N}\sum_{j=1}^{N'} \ip{y_i}{K(x_i,z_j) w_j},
\] 
and function norm $\|f\|^2_{\Hk}$ given as $\ip{f}{f}_{\Hk}$. Analogous to the canonical reproducing property of scalar-valued RKHS kernels, i.e., 
\begin{equation}
	h(x) = \ip{h}{k(x,\cdot)}_{\mathcal{H}_k} \quad \forall h \in \mathcal{H}_k,
\label{rk_rep}
\end{equation}
the matrix-valued kernel $K$ satisfies the following reproducing property:
\begin{equation}
	\ip{f(x)}{y} = \ip{f}{K_x y}_{\Hk} \quad \forall (x,y) \in \X \times \Y, \forall f \in \Hk,
\label{RK_rep}
\end{equation}
and is thereby referred to as the \emph{reproducing kernel} for $\Hk$. As our learning problem involves the Jacobian of $f$, we will also require a representation for the derivative of the kernel matrix. Accordingly, suppose the kernel matrix $K$ lies in $\mathcal{C}^{2}(\X \times \X)$. For any $j \in \{1,\ldots, n\}$, define the matrix functions $\frac{\partial}{\partial s^j} K: \X \times \X \rightarrow \Lin(\Y)$ and $\frac{\partial}{\partial r^j} K : \X \times \X \rightarrow \Lin(\Y)$ as
\[
	\dfrac{\partial}{\partial s^j} K(z,x) := \left.\dfrac{\partial}{\partial s^j} K(r , s) \right|_{r = z, s = x}   \quad \dfrac{\partial}{\partial r^j} K(z,x) := \left. \dfrac{\partial}{\partial r^j} K(r,s) \right|_{r = z,s=x},
\]
where the derivative of the kernel matrix is understood to be element-wise. Define the $\mathcal{C}^1$ function $\partial_j K_x y \in \Y(\X)$ as
\[
	\partial_j K_x y(z) := \dfrac{\partial}{\partial s^j} K(z, x)y \quad \forall z \in \X.
\]
The following result provides a useful reproducing property for the derivatives of functions in $\Hk$ that will be instrumental in deriving the solution algorithm.

\begin{theorem}[Derivative Properties on $\Hk$ \citep{MicheliGlaunes2014}] \label{thm:RK_der}
Let $\Hk$ be a RKHS in $\Y(\X)$ with reproducing kernel $K \in \mathcal{C}^2(\X \times \X)$. Then, for all $(x,y) \in \X \times \Y$:
\begin{enumerate}
	\item[(a)] $\partial_j K_x y \in \Hk$ for all $j = 1,\ldots,n$. 
	\item[(b)] The following derivative reproducing property holds for all $j = 1,\ldots, n$:
	\[
		\ip{\dfrac{\partial f(x)}{\partial x^j}}{y} = \ip{f}{ \partial_j K_x y}_{\Hk} \quad \forall f \in \Hk.
	\]
\end{enumerate}
\end{theorem}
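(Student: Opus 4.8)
The plan is to realize $\partial_j K_x y$ as the limit, in the norm of $\Hk$, of difference quotients of kernel sections, and then to obtain the derivative reproducing property by passing the reproducing property~\eqref{RK_rep} through the same limit. Throughout I write $e_j$ for the $j$-th coordinate vector of $\reals^n$ acting on the base point in $\X$, and I repeatedly use the elementary identity $\ip{K_a u}{K_b v}_{\Hk} = \ip{u}{K(a,b) v}$, which is immediate from the definition of $\langle\cdot,\cdot\rangle_{\Hk}$ on the span of kernel sections.

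For part (a), I define the difference quotient
\[
g_h := \frac{1}{h}\left( K_{x + h e_j}\, y - K_x\, y \right) \in \Hk,
\]
which is a genuine element of $\Hk$, being a finite linear combination of kernel sections. The first step is to show that $\{g_h\}$ is Cauchy as $h \to 0$. Using the identity above,
\[
\ip{g_h}{g_{h'}}_{\Hk} = \frac{1}{h h'}\, \Big\langle y,\ \big[ K(x + h e_j, x + h' e_j) - K(x + h e_j, x) - K(x, x + h' e_j) + K(x,x) \big]\, y \Big\rangle.
\]
Because $K \in \mathcal{C}^2(\X\times\X)$, the bracketed second-order difference divided by $h h'$ converges, as $h, h' \to 0$ independently, to the mixed partial $\tfrac{\partial^2}{\partial r^j \partial s^j} K(x,x)$, so that $\ip{g_h}{g_{h'}}_{\Hk}$ tends to the single limit $L := \ip{y}{\tfrac{\partial^2}{\partial r^j \partial s^j} K(x,x)\, y}$ regardless of how $h,h'\to0$. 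Hence $\|g_h\|^2_{\Hk}$, $\|g_{h'}\|^2_{\Hk}$ and $\ip{g_h}{g_{h'}}_{\Hk}$ all tend to $L$, so $\|g_h - g_{h'}\|^2_{\Hk} \to 0$ and $\{g_h\}$ is Cauchy. Completeness of $\Hk$ then furnishes a limit $g^* \in \Hk$. To identify it, I use that evaluation is continuous: for every $z \in \X$ and $w \in \Y$, $\ip{g_h(z)}{w} = \ip{g_h}{K_z w}_{\Hk} \to \ip{g^*}{K_z w}_{\Hk} = \ip{g^*(z)}{w}$, while directly from the definition of the kernel derivative $g_h(z) = \tfrac{1}{h}(K(z,x+he_j) - K(z,x))y \to \tfrac{\partial}{\partial s^j}K(z,x) y = \partial_j K_x y(z)$. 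Therefore $g^* = \partial_j K_x y$ pointwise, proving $\partial_j K_x y \in \Hk$.

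For part (b), I apply the reproducing property~\eqref{RK_rep} to the difference quotient: for any $f \in \Hk$,
\[
\ip{f}{g_h}_{\Hk} = \frac{1}{h}\Big( \ip{f}{K_{x+he_j} y}_{\Hk} - \ip{f}{K_x y}_{\Hk} \Big) = \frac{1}{h}\, \ip{f(x + h e_j) - f(x)}{y}.
\]
By part (a), $g_h \to \partial_j K_x y$ in $\Hk$, so the left-hand side converges to $\ip{f}{\partial_j K_x y}_{\Hk}$ by continuity of the inner product. Consequently the right-hand side converges as well; letting $y$ range over a basis of $\Y$ shows that each component difference quotient $\tfrac{1}{h}(f^k(x+he_j) - f^k(x))$ has a limit, so the partial derivative $\tfrac{\partial f(x)}{\partial x^j}$ exists and the right-hand side tends to $\ip{\tfrac{\partial f(x)}{\partial x^j}}{y}$. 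Equating the two limits yields the claimed identity, with differentiability of $f$ obtained as a byproduct rather than assumed.

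I expect the main obstacle to be the Cauchy estimate in part (a): one must argue that the doubly-indexed family $\ip{g_h}{g_{h'}}_{\Hk}$ has a single limit as $h,h' \to 0$ independently, which is exactly where the $\mathcal{C}^2$ hypothesis is essential — it guarantees that the second-order difference quotient of $K$ converges to a continuous mixed partial irrespective of the direction of approach. The remaining care is bookkeeping: confirming that the base point sits in the second argument of $K$, so that $g_h$ perturbs the second slot and the relevant derivative $\tfrac{\partial}{\partial s^j}$ matches the definition of $\partial_j K_x y$.
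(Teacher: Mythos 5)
The paper does not actually prove this theorem: it is imported as a known result, cited to \citep{MicheliGlaunes2014} (with \citep{Zhou2008} playing the same role for the scalar-valued case invoked in Section~\ref{sec:met_sub}), so there is no in-paper argument to compare against. Your proof is correct, and it is essentially the standard difference-quotient argument from that cited literature: realize $\partial_j K_x y$ as the $\Hk$-norm limit of the sections $g_h = \frac{1}{h}(K_{x+he_j}y - K_x y)$, identify the limit pointwise, and pass the reproducing property~\eqref{RK_rep} through the limit to obtain (b). The two places where such a proof can silently go wrong are both handled properly in your write-up. First, the Cauchy estimate genuinely requires the \emph{joint} limit of $\ip{g_h}{g_{h'}}_{\Hk}$ as $(h,h')\to(0,0)$, not an iterated one; your reduction to the second-order difference quotient of the $\mathcal{C}^2$ function $(a,b)\mapsto \ip{y}{K(x+ae_j,\,x+be_j)\,y}$, which converges to the continuous mixed partial by the mean value theorem, is exactly the right mechanism, and the expansion $\|g_h-g_{h'}\|^2_{\Hk} \to L-2L+L=0$ then closes the argument. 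Second, in part (b) the differentiability of an arbitrary $f\in\Hk$ is a \emph{conclusion}, not a hypothesis; you correctly derive existence of the componentwise partial derivatives from convergence of $\ip{f}{g_h}_{\Hk}$ by letting $y$ range over a basis of $\Y$, rather than assuming $f$ is smooth. The identity $\ip{K_a u}{K_b v}_{\Hk}=\ip{u}{K(a,b)v}$ you rely on is indeed immediate from the paper's definition of the inner product on the span of kernel sections, and your closing bookkeeping remark — that the perturbation must sit in the second slot of $K$ so that the limit matches the paper's definition of $\partial_j K_x y$ via $\frac{\partial}{\partial s^j}$ — is the correct convention check. In short: the theorem is a black box in the paper, and you have supplied a complete and correct proof of it, matching the approach of the reference the paper cites.
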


As mentioned in Section~\ref{sec:prob}, the bi-linearity of constraint~\eqref{nat_contraction_W} forces us to adopt an alternating solution strategy whereby in the dynamics sub-problem, $W$ is held fixed and we minimize $J_d$ with respect to $\{\hat{f},\hat{B}\}$. In the metric sub-problem, $\{\hat{f},\hat{B}\}$ are held fixed and we minimize $J_m$ with respect to $\{W,\wl,\wu\}$.

In the following we derive several useful representer theorems to characterize the solution of the two sub-problems, under the two simplifying assumptions introduced in Section~\ref{sec:finite}. 

\subsection{Dynamics Sub-Problem}\label{sec:dyn_sub}

Let $K^f$ be the reproducing $\mathcal{C}^2$ kernel for an $\Y$-valued RKHS $\Hk^f$ and let $K^B$ be another reproducing kernel for an $\Y$-valued RKHS $\Hk^B$. Define the finite-dimensional subspaces:
\begin{align}
	\Vf &:= \left\{ \sum_{i=1}^{N_c} K^f_{\xs} a_i + \sum_{i=1}^{N_c} \sum_{p=1}^{n} \partial_p K^f_{\xs} a'_{ip},\quad a_i, a'_{ip} \in \reals^n \right\} \subset \Hk^f \ , \label{rep} \\
	\Vb &:= \left\{ \sum_{i=1}^{N_c} \Kb_{\xs} c_i + \sum_{i=1}^{N_c} \sum_{p=1}^{n} \partial_p \Kb_{\xs} c'_{ip},\quad c_i, c'_{ip} \in \reals^n \right\} \subset \Hk^B \ .\label{rep_b}
\end{align}
Note that all $\xs$ taken from the training dataset of $(\xs,\us,\dot{x}_i)$ tuples are a subset of $X_c$.

\begin{theorem}[Representer Theorem for $f,B$]\label{thm:rep_dyn}
Suppose the reproducing kernel $K^B$ is chosen such that all functions $g \in \Hkb$ satisfy the sparsity structure $g^j(x) = 0$ for $j = 1,\ldots,n-m$. Consider then the pointwise-relaxed dynamics sub-problem for problem~\eqref{prob_gen2}:
\begin{alignat}{2}
&\min_{\substack{\hat{f} \in \Hk^{f}, \ \hat{b}_j \in \Hk^{B}, j =1,\ldots,m }} \qquad  && J_d(\hat{f},\hat{B})  \nonumber \\
 & \qquad \qquad \mathrm{s.t.} && \Fl(\xs; \hat{f},W) \preceq 0 \quad \forall \xs \in X_c \ . \label{lmi_pw1}
\end{alignat}
Suppose the feasible set for the LMI constraint is non-empty. Denote $f^*$ and $b_j^*, j = 1,\ldots, m$ as the optimizers for this sub-problem. Then, $f^* \in \Vf$ and $\{b^*_j\}_{j=1}^{M} \in \Vb$.
\end{theorem}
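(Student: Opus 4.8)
The plan is to establish this as a standard orthogonal-projection argument adapted to the vector-valued RKHS setting, where the key novelty is that the feasible region for the constraint depends on $\hat{f}$ only through pointwise evaluations of $\hat{f}$ and its Jacobian at the constraint points $\xs \in X_c$. First I would decompose each candidate function using the orthogonal direct sum $\Hk^f = \Vf \oplus \Vf^{\perp}$, writing $\hat{f} = \hat{f}_{\parallel} + \hat{f}_{\perp}$ with $\hat{f}_{\parallel} \in \Vf$ and $\hat{f}_{\perp} \in \Vf^{\perp}$, and analogously $\hat{b}_j = \hat{b}_{j\parallel} + \hat{b}_{j\perp}$ with respect to $\Vb$. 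The goal is to show that the orthogonal component can be set to zero without violating feasibility and without increasing the objective, so any optimizer must in fact lie in $\Vf$ (respectively $\Vb$).

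The crux is to argue that both the constraint and the data-fit term of the objective see only the projection. Recall that $\Fl(\xs;\hat{f},W)$ is built from $\hat{f}(\xs)$ and $\partial \hat{f}(\xs)/\partial x$ (through the term $\wwidehat{(\partial \hat{f}/\partial x)W}$), and that the regression loss depends on $\hat{f}(\xs)+\hat{B}(\xs)\us$. Using the reproducing property~\eqref{RK_rep} and the derivative reproducing property of Theorem~\ref{thm:RK_der}(b), I would show that for any $y \in \Y$, both $\ip{\hat{f}(\xs)}{y} = \ip{\hat{f}}{K^f_{\xs}y}_{\Hk^f}$ and $\ip{\partial \hat{f}(\xs)/\partial x^p}{y} = \ip{\hat{f}}{\partial_p K^f_{\xs}y}_{\Hk^f}$ are inner products of $\hat{f}$ against the generating elements $K^f_{\xs}y$ and $\partial_p K^f_{\xs}y$ of $\Vf$; since $\hat{f}_{\perp}$ is orthogonal to all of these, it contributes nothing to any pointwise value or Jacobian value of $\hat{f}$ at the points in $X_c$. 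Hence replacing $\hat{f}$ by $\hat{f}_{\parallel}$ leaves both the constraint~\eqref{lmi_pw1} and the regression residuals unchanged, and likewise for $\hat{B}$ (noting that $\hat{B}(\xs)\us$ only enters the loss, and $\hat{B}$ does not appear in the LMI under Assumption~\ref{ass:B_simp}). Meanwhile, by the Pythagorean identity, $\|\hat{f}\|^2_{\Hk^f} = \|\hat{f}_{\parallel}\|^2_{\Hk^f} + \|\hat{f}_{\perp}\|^2_{\Hk^f} \geq \|\hat{f}_{\parallel}\|^2_{\Hk^f}$, so the regularization term strictly decreases unless $\hat{f}_{\perp}=0$; the same holds for each $\|\hat{b}_j\|^2_{\Hk^B}$.

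Combining these observations, any feasible point with nonzero orthogonal component is strictly dominated by its projection, which is still feasible; therefore every optimizer $f^*$ lies in $\Vf$ and every $b_j^*$ lies in $\Vb$, giving the claimed finite-dimensional representation. The main obstacle I anticipate is being careful about the derivative reproducing step: one must verify that $\partial_p K^f_{\xs}y$ genuinely lies in $\Hk^f$ (guaranteed by Theorem~\ref{thm:RK_der}(a), which requires $K^f \in \mathcal{C}^2$) and that the $(i,j)$ entries of the Jacobian $\partial\hat{f}/\partial x$ appearing in $\Fl$ are each individually recovered as such inner products, so that feasibility is truly invariant under projection. A secondary point worth stating explicitly is that the sparsity hypothesis on $K^B$ (that every $g \in \Hkb$ has vanishing first $n-m$ components) ensures the projected $\hat{B}$ automatically retains the normal form of Assumption~\ref{ass:B_simp}, so projecting onto $\Vb$ does not leave the admissible class; this is why the theorem's hypothesis on $K^B$ is needed rather than merely convenient.
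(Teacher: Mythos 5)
Your proposal is correct and follows essentially the same argument as the paper's proof: the orthogonal decomposition $\Hk^f = \Vf \oplus \Vf^{\perp}$ (resp.\ $\Hkb = \Vb \oplus \Vb^{\perp}$) via the Projection Theorem, invariance of the LMI constraint under projection via the reproducing and derivative-reproducing properties (with $\hat{B}$ absent from $\Fl$ thanks to the $K^B$ sparsity hypothesis and the fixed annihilator~\eqref{B_perp}), and the Pythagorean identity on the regularizers to force the perpendicular components of any optimizer to vanish. The only cosmetic difference is that you note the perpendicular components evaluate to zero pointwise on $X_c$ so the residuals are literally unchanged, whereas the paper expands $J_d$, kills the cross-terms by orthogonality, and absorbs $\|H^{\perp}\|^2$ into a non-negative bracket; both rest on exactly the same orthogonality relations.
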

\begin{proof}
For a fixed $W$, the constraint is convex in $\hat{f}$ by linearity of the matrix $\Fl$ in $\hat{f}$ and $\partial \hat{f}/\partial x$. By assumption~\eqref{ass:B_simp}, and the simplifying form for $B_{\perp}$, the matrix $\Fl$ is additionally independent of $B$. Then, by strict convexity of the objective functional, there exists unique minimizers $f^*, \{b_j\}^* \in \Hk$, provided the feasible region is non-empty~\citep{KurdilaZabarankin2006}. 

Since $\Vf$ and $\Vb$ are closed, by the Projection Theorem, $\Hk^f = \Vf \oplus \Vf^{\perp}$ and $\Hkb = \Vb \oplus \Vb^{\perp}$. Thus, any $g \in \Hk^f$ may be written as $g^{\Vf} + g^{\perp}$ where $g^{\Vf} \in \Vf$, $g^{\perp} \in \Vf^\perp$, and $\ip{g^{\Vf}}{g^{\perp}}_{\Hk^f} = 0$. Similarly, any $g \in \Hkb$ may be written as $g^{\Vb} + g^{\perp}$ where $g^{\Vb} \in \Vb$, $g^{\perp} \in \Vb^\perp$, and $\ip{g^{\Vb}}{g^{\perp}}_{\Hkb} = 0$.

Let $f = f^{\Vf} + f^{\perp}$ and $b_j = b_{j}^{\Vb} + b_{j}^\perp$, and define 
\[
	\begin{split}
	H^{\V}(x,u) &= f^{\Vf}(x) + \sum_{j=1}^{m} u^j b_j^{\Vb} (x) \\ 
	H^{\perp}(x,u) &= f^{\perp}(x) + \sum_{j=1}^{m} u^j b_j ^\perp (x).
	\end{split}
\]
We can re-write $J_d(f,B)$ as:
\[
\begin{split}
	J_d(f,B) &= \sum_{i=1}^{N} \ip{H^{\V} (x_i, u_i) - \dot{x}_i}{ H{^\V} (x_i,u_i) - \dot{x}_i} + 2\ip{H^{\V} (x_i, u_i) - \dot{x}_i}{H^\perp (x_i,u_i)}  \\
	&\qquad \qquad + \ip{H^\perp(x_i, u_i)}{H^\perp(x_i, u_i)} +  \mu_f \left( \|f^{\Vf}\|_{\Hk^f}^2 + \|f^\perp \|_{\Hk^f}^2 \right)  \\
	&\qquad \qquad + \mu_b \left( \sum_{j=1}^{m} \|b_j^{\Vb} \|_{\Hkb}^2 + \|b_j^\perp \|_{\Hkb}^2 \right). \\
\end{split}
\]
Leveraging the reproducing property in~\eqref{RK_rep}, 
\[
	\begin{split}
	&\ip{H^{\V} (\xs, u_i) - \dot{x}_i}{H^\perp (\xs,u_i)}  \\
				&\qquad = \ip{f^\perp (\xs) + \sum_{j=1}^{m} u_i^j b_j^\perp(\xs)}{H^\V (\xs, u_i) - \dot{x}_i} \\
				&\qquad = \underbrace{\ip{f^\perp}{K_{\xs}^f \left(H^\V (\xs, u_i) - \dot{x}_i \right)}_{\Hk^f}}_{=\,0} + \underbrace{\ip{\sum_{j=1}^{m} u_i^j b_j^\perp}{\Kb_{\xs} \left(H^\V (\xs, u_i) - \dot{x}_i \right)}_{\Hkb}}_{=\,0} 
	\end{split}
\]
since $K_{\xs}^f \left(H^\V (\xs, u_i) - \dot{x}_i \right) \in \Vf$, $\Vb^\perp$ is closed under addition, and $\Kb_{\xs} \left(H^\V (\xs, u_i) - \dot{x}_i \right) \in \Vb$. Thus, $J_d(f,B)$ simplifies to
\begin{equation}
\begin{split}
	\sum_{i=1}^{N} \left\| H^{\V} (\xs, u_i) - \dot{x}_i \right\|^2 &+ \mu_f  \|f^{\Vf}\|_{\Hk^f}^2 + \mu_b \sum_{j=1}^{n} \|b_j^{\Vb} \|_{\Hk^B}^2  + \\
	&+\left[ \sum_{i=1}^{N}  \left\| H^\perp (\xs, u_i) \right\|^2 +  \mu_f \|f^\perp \|_{\Hk^f}^2  + \mu_b\sum_{j=1}^{m}  \|b_j^\perp \|_{\Hk^B}^2 \right].
\end{split}
\label{obj_simp}
\end{equation}
Now, the $(p,q)$ element of $\partial_f W(\xs)$ takes the form 
\[
	\ip{ \frac{ \partial w_{pq} (\xs)}{\partial x}}{f (\xs)} = \ip{f}{K_{\xs}^f\frac{ \partial w_{pq} (\xs)}{\partial x}}_{\Hk^f} =  \ip{f^{\Vf}}{K^f_{\xs}\frac{ \partial w_{pq} (\xs)}{\partial x}}_{\Hk^f}.
\]
Column $p$ of $\frac{\partial f(\xs)}{\partial x} W(\xs)$ takes the form
\[
	\begin{split}
	\sum_{j=1}^{n} w_{j p} (\xs) \dfrac{\partial f(\xs)}{\partial x^j}  = \begin{bmatrix} \sum_{j=1}^{n}  w_{jp}(\xs) \ip{ \frac{\partial f(\xs)}{\partial x^j} }{ e_1} \\ \vdots \\ 	\sum_{j=1}^{n}  w_{jp}(\xs) \ip{ \frac{\partial f(\xs)}{\partial x^j} }{ e_n}	\end{bmatrix} & = 
												       	 \begin{bmatrix} \sum_{j=1}^{n}  w_{jp}(\xs) \ip{ f }{\partial_j K^f_{\xs} e_1}_{\Hk^f} \\ \vdots \\ 	\sum_{j=1}^{n}  w_{jp}(\xs) \ip{ f }{\partial_j K^f_{\xs} e_n}_{\Hk^f}	\end{bmatrix} \\
					&=  \begin{bmatrix} \sum_{j=1}^{n}  w_{jp}(\xs) \ip{ f^{\Vf} }{\partial_j K^f_{\xs} e_1}_{\Hk^f} \\ \vdots \\ 	\sum_{j=1}^{n}  w_{jp}(\xs) \ip{ f^{\Vf} }{\partial_j K^f_{\xs} e_n}_{\Hk^f}	\end{bmatrix},
	\end{split}												       	 
\]
where $e_i$ is the $i^{\text{th}}$ standard basis vector in $\reals^n$. Thus, $f^\perp$ plays no role in pointwise relaxation of constraint~\eqref{nat_contraction_W} and thus does not affect problem feasibility.
Given the assumed structure of functions in $\Hk^B$ as provided in the theorem statement, the matrix in~\eqref{B_perp} is a valid annihilator matrix for $\hat{B}(x)$. Consequently, $b^{\perp}$ also has no effect on problem feasibility. Thus, by non-negativity of the term in the square brackets in~\eqref{obj_simp}, we have that the optimal $f$ lies in $\Vf$ and all optimal $\{b_j\}_{j=1}^{m}$ lie within $\Vb$. 
\end{proof}

The key consequence of this theorem is the reduction of the infinite-dimensional search problem for the functions $f$ and $b_j, j=1,\ldots,m$ to a finite-dimensional \emph{convex} optimization problem for the constant vectors $a_i,a'_{ip},\{c_i^{(j)}, c_{ip}^{(j)'}\}_{j=1}^{m} \in \reals^n$, by choosing the function classes $\mathcal{H}^f = \Hk^{f}$ and $\mathcal{H}^B = \Hk^{B}$. Next, we characterize the optimal solution to the metric sub-problem.

\subsection{Metric Sub-Problem}\label{sec:met_sub}

By the simplifying assumption in Section~\ref{sec:B_simp}, constraint~\eqref{killing_A} requires that $W_{\perp}$ be only a function of the first $(n-m)$ components of $x$. Thus, define $\kw : \X \times \X \rightarrow \reals$ as a \emph{scalar} reproducing kernel with associated real-valued scalar RKHS $\Hw$. Additionally, define $\kwp : \X \times \X \rightarrow \reals$ as another scalar reproducing kernel with associated real-valued scalar RKHS $\Hwp$. In particular, $\kwp$ is only a function of the first $(n-m)$ components of $x \in \X$ in both arguments. Let $\kw_x$ and $\kwp_x$ denote $\kw(x,\cdot)$ and $\kwp(x,\cdot)$ respectively. 

Define the kernel derivative functions:
\[
    \partial_j \kw_{x}(z) := \left.\dfrac{\partial \kw}{\partial r^j} \kw(r,s)\right|_{(r=x,s=z)} \quad \partial_j \kwp_{x}(z) := \left.\dfrac{\partial \kwp}{\partial r^j} \kw(r,s)\right|_{(r=x,s=z)}  \quad \forall z \in \X.
\]
From~\citep{Zhou2008}, it follows that the kernel derivative functions satisfy the following two properties, similar to Theorem~\ref{thm:RK_der}:
\[
    \begin{split}
            \partial_j \kw_{x} \in \Hw \quad \forall j = 1,\ldots,n,\  x \in \X \\
            \dfrac{\partial h}{\partial x^j}(x) = \ip{h}{\partial_j \kw_x}_{\Hw} \quad \forall h \in \Hw.
    \end{split}
\]
A similar property holds for $\partial_j \kwp_x$ and $\Hwp$. Consider then the following finite-dimensional spaces:
\begin{align}
	\V_{\kw} &:= \left\{ \sum_{i=1}^{N_c}  a_i \kw_{\xs} + \sum_{i=1}^{N_c} \sum_{p=1}^{n}  a_{ip}' \,  \partial_p \kw_{\xs} ,\quad a_i, a_{ip}' \in \reals \right\} \subset \Hw \\
	\V_{\kwp} & := \left\{ \sum_{i=1}^{N_c} c_i \kwp_{\xs} + \sum_{i=1}^{N_c} \sum_{p=1}^{n-m}  c_{ip}' \,  \partial_p \kwp_{\xs} ,\quad c_i, c_{ip}' \in \reals \right\} \subset \Hwp 
\end{align}
and the proposed representation for $W(x)$:
\begin{equation}
	\begin{split}
	W(x) =  &\sum_{i=1}^{N_c} \hat{\Theta}_i \kwp_{\xs} (x) + \sum_{i=1}^{N_c}\sum_{j=1}^{n-m} \hat{\Theta}'_{ij} \partial_{j}\kwp_{\xs} (x)  \\
		+  & \sum_{i=1}^{N_c} \Theta_i \kw_{\xs} (x) + \sum_{i=1}^{N_c}\sum_{j=1}^{n} \Theta'_{ij} \partial_{j}\kw_{\xs} (x),
	\end{split}
\label{W_rep}
\end{equation}
where $\hat{\Theta}, \hat{\Theta}' \in \Sj_{n}$ are constant symmetric matrices with non-zero entries only in the top-left $(n-m)\times(n-m)$ block, and $\Theta,\Theta' \in \Sj_{n}$ are constant symmetric matrices with zero entries in the top-left $(n-m)\times(n-m)$ block.

\begin{theorem}[Representer Theorem for $W$]\label{thm:rep_metric}
Consider the pointwise-relaxed metric sub-problem for problem~\eqref{prob_gen2}:
\begin{alignat}{2}
&\min_{\substack{w_{pq} \in \Hwp, (p,q) \in \{1,\ldots,(n-m)\} \\ w_{pq} \in \Hw \text{ else} \\ \wl,\wu \in \reals_{>0}}} \qquad && J_m(W,\wl,\wu)  \nonumber \\
&\qquad \qquad \quad \mathrm{s.t.} \qquad && \Fl(\xs; \hat{f}, W) \preceq 0, \quad \forall \xs \in X_c, \label{lmi_pw2} \\
&\qquad && \wl I_n \preceq W(\xs) \preceq \wu I_n, \quad \forall \xs \in X_c, \label{lmi_pw3}
\end{alignat}
Suppose the feasible set of the above LMI constraints is non-empty. Denote $W^*$ as the optimizer for this sub-problem. Then, $W^*$ takes the form given in~\eqref{W_rep}.
\end{theorem}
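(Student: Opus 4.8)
The plan is to mirror the argument used for Theorem~\ref{thm:rep_dyn}, now applied entry-wise to the symmetric matrix-valued decision variable $W$. First I would record convexity: both constraints~\eqref{lmi_pw2} and~\eqref{lmi_pw3} are linear in $W$ and its first derivatives, so the feasible set is convex, while the objective $J_m$ is strictly convex in the entries $\{w_{pq}\}$ (the regularizer is $\mu_w\|W\|_{\mathcal{H}^W}^2=\mu_w\sum_{p,q}\|w_{pq}\|^2$ and $\wu-\wl$ is affine). Hence, assuming nonempty feasibility, a minimizer exists and its $W$-entries are uniquely determined. I would then split each scalar RKHS by the Projection Theorem, $\Hwp=\V_{\kwp}\oplus\V_{\kwp}^{\perp}$ and $\Hw=\V_{\kw}\oplus\V_{\kw}^{\perp}$, and write each entry as $w_{pq}=w_{pq}^{\V}+w_{pq}^{\perp}$, where $w_{pq}^{\V}$ lies in the appropriate finite-dimensional span of $\{\kw_{\xs},\partial_p\kw_{\xs}\}$ (or its $\kwp$ counterpart) over $\xs\in X_c$.

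The crux is to show that neither constraint sees the orthogonal components $w_{pq}^{\perp}$. Both constraints are imposed only at the finite set $X_c$, and they involve $W$ solely through (i) pointwise values $w_{pq}(\xs)$ and (ii) the directional derivatives $\partial w_{pq}(\xs)/\partial x^j$ entering the Lie-derivative term $\partial_{\hat{f}}W$ inside $\Fl$; the remaining terms $\wwidehat{(\partial\hat{f}/\partial x)\,W}$, $2\lambda W$, and the bounds $\wl I_n\preceq W\preceq\wu I_n$ use only pointwise values. Using the scalar reproducing property~\eqref{rk_rep} together with the derivative reproducing property of~\citep{Zhou2008}, I would rewrite $w_{pq}(\xs)=\ip{w_{pq}}{\kw_{\xs}}_{\Hw}$ and $\partial w_{pq}(\xs)/\partial x^j=\ip{w_{pq}}{\partial_j\kw_{\xs}}_{\Hw}$ (analogously with $\kwp$ on the top-left block). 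Since $\kw_{\xs}$ and $\partial_j\kw_{\xs}$ are exactly the generators of $\V_{\kw}$, orthogonality gives $\ip{w_{pq}^{\perp}}{\kw_{\xs}}_{\Hw}=\ip{w_{pq}^{\perp}}{\partial_j\kw_{\xs}}_{\Hw}=0$, so every constrained quantity equals its value for $w_{pq}^{\V}$ alone, and the feasible set is identical whether expressed in terms of $\{w_{pq}\}$ or $\{w_{pq}^{\V}\}$.

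Finally, by Pythagoras $\|w_{pq}\|^2=\|w_{pq}^{\V}\|^2+\|w_{pq}^{\perp}\|^2$, and since $\wu-\wl$ is independent of $W$, discarding each $w_{pq}^{\perp}$ preserves feasibility while strictly lowering $J_m$ unless it already vanishes; optimality therefore forces $w_{pq}^{\perp}=0$, placing each entry in its finite-dimensional subspace. I would close by noting that projection preserves the two structural requirements automatically: symmetry $w_{pq}=w_{qp}$ carries over because a symmetric pair of entries lives in the same scalar RKHS and is projected onto the same subspace, and the block-sparsity of $W_{\perp}$ from Section~\ref{sec:B_simp} is respected because the top-left block uses $\kwp$, which is independent of the last $m$ coordinates. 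Collecting the entry-wise expansions into symmetric matrix coefficients $\hat{\Theta},\hat{\Theta}',\Theta,\Theta'\in\Sj_n$ with the stated block-support yields precisely the representation~\eqref{W_rep}. The main obstacle, relative to the dynamics sub-problem, is bookkeeping the term $\partial_{\hat{f}}W$: it is the only place where derivatives of the decision variable enter the LMI, and the argument hinges on the derivative reproducing property to confine it to $\V_{\kw}$; the new uniform-definiteness constraint~\eqref{lmi_pw3} is comparatively benign since it is purely pointwise.
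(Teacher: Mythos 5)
Your proposal is correct and takes essentially the same route as the paper's own proof: decompose each scalar entry $w_{pq}$ via the Projection Theorem, use the pointwise and derivative reproducing properties (the latter from~\citep{Zhou2008}) to show that the sampled LMI constraints~\eqref{lmi_pw2}--\eqref{lmi_pw3} depend only on the components in $\V_{\kw}$ and $\V_{\kwp}$, and then invoke strict convexity of the regularizer to conclude the orthogonal components must vanish at the optimum, with symmetry and the block structure giving the form~\eqref{W_rep}. The minor additions you make (explicit Pythagoras bookkeeping and the observation that the $W$-entries are uniquely determined even though the full minimizer may not be) are consistent with, and do not alter, the paper's argument.
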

\begin{proof}
Notice that while the regularizer term is strictly convex, the surrogate loss function for the condition number is affine. However, provided the feasible set is non-empty, there still exists a minimizer (possibly non-unique) for the above sub-problem. 

Since $\V_{\kw}$ and $\V_{\kwp}$ are closed, by the Projection Theorem, $\Hw = \V_{\kw} \oplus \V_{\kw}^{\perp}$ and $\Hwp = \V_{\kwp} \oplus \V_{\kwp}^{\perp}$. Thus, any $h\in \Hw$ may be written as $h^{\V_{\kw}} + h^{\perp}$ where $h^{\V_{\kw}} \in \V_{\kw}$, $h^{\perp} \in \V_{\kw}^{\perp}$ and $\ip{h^{\V_{\kw}}}{h^{\perp}}_{\Hw} = 0$. A similar decomposition property holds for $\Hwp$. 

Now, consider the following chain of equalities for the $(p,q)^{\text{th}}$ element of $\partial_{f} W_{\perp} (\xs)$:
\[
	\begin{split} 
		\partial_{f} w_{\perp_{pq}}(\xs) &=\sum_{j=1}^{n-m} \dfrac{\partial w_{\perp_{pq}} (\xs)}{\partial x^j} f^j (\xs) \\
								&= \sum_{j=1}^{n-m} \ip{w_{\perp_{pq}}}{\partial_{j}\kwp_{\xs}}_{\Hwp} f^j(\xs) \\
								&= \ip{w_{\perp_{pq}}}{ \sum_{j=1}^{n-m} f^j(\xs) \partial_{j}\kwp_{\xs}}_{\Hwp} \\
								&= \ip{w^{\V_{\kwp}}_{\perp_{pq}}}{ \sum_{j=1}^{n-m} f^j(\xs) \partial_{j}\kwp_{\xs}}_{\Hwp} ,
	\end{split}
\]
since $\sum_{j=1}^{n-m} f^j(\xs) \partial_{j}\kwp_{\xs} \in \V_{\kwp}$. Trivially, the $(p,q)$ element of $W(\xs)$ takes the form
\[
		w_{pq}(\xs) = \begin{cases}  \ip{w^{\V_{\kwp}}_{pq}}{\kwp_{\xs}}_{\Hwp} &\text{ if } (p,q) \in \{1,\ldots,(n-m)\} \\
						 	    \ip{w^{\V_\kw}_{pq}}{\kw_{\xs}}_{\Hw} &\text{ else.}
					\end{cases}
\]
Thus, constraints~\eqref{nat_contraction_W} and uniform definiteness at the constraint points in $X_c$ can be written in terms of functions in $\V_\kw$ and $\V_{\kwp}$ alone. By strict convexity of the regularizer, and recognizing that $W(x)$ is symmetric, the result follows. 
\end{proof}

Similar to the previous section, the key property here is the reduction of the infinite-dimensional search over $W(x)$ to the finite-dimensional convex optimization problem over the constant symmetric matrices $\Theta_i,\Theta_{ij}',\hat{\Theta}_i,\hat{\Theta}'_{ij}$ by choosing the function class for the entries of $W$ using the scalar-valued RKHS.

At this point, both sub-problems are finite-dimensional convex optimization problems. Crucially, the only simplifications made are those given in Section~\ref{sec:finite}. However, a final computational challenge here is that the number of parameters scales with the number of training ($N$) and constraint ($N_c$) points. This is a fundamental challenge in all non-parametric methods. In the next section we present the dimensionality reduction techniques used to alleviate these issues.

\subsection{Approximation via Random Matrix Features}
\label{sec:feat}

The size of the problem using full matrix-valued kernel expansions grows rapidly in $N_c \cdot n$, the number of constraint points times the state dimensionality. This makes training slow for even moderately long demonstrations in low-dimensional settings. The induced dynamical system is slow to evaluate and integrate at inference time. Random feature approximations to kernel functions have been extensively used to scale up training complexity and inference speed of kernel methods~\citep{RahimiRecht2007,AvronSindhwaniEtAl2016} in a number of applications~\citep{HuangAvronEtAl2014}.  The quality of approximation can be explicitly controlled by the number of random features. In particular, it has been shown~\citep{RahimiRecht2008} that any function in the RKHS associated with the exact kernel can be approximated to arbitrary accuracy by a linear combination of a sufficiently large number of random features. 

These approximations have only recently been extended to matrix-valued kernels~\citep{Minh2016,BraultHeinonenEtAl2016}. Given a matrix-valued kernel $K$, one defines an appropriate matrix-valued feature map $\Phi : \X \rightarrow \reals^{d\times n}$ with the property 
\[
    K(x,z) \approx \Phi(x)^T \Phi(z),
\]
where $d$ controls the quality of this approximation. A canonical example is the Gaussian separable kernel $K_{\sigma}(x,z) := e^{\frac{-\|x-z\|_2^2}{\sigma^2}} I_n$ with feature map:
\[
    \Phi(x)  = \dfrac{1}{\sqrt{s}}\begin{bmatrix} \cos(\omega_1^T x) \\ \sin(\omega_1^T x) \\ \vdots \\ \cos(\omega_s^T x) \\ \sin(\omega_s^T x)    \end{bmatrix} \otimes I_n,
\]
where $\omega_1,\ldots,\omega_s$ are i.i.d. draws from $\mathcal{N}(0,\sigma^{-2}I_n)$ and $\otimes$ denotes the Kronecker product. 

By construction, the linear span $\{K_z y \mid z \in \X, y \in \Y\}$ is dense in $\Hk$. Thus, any function $g$ in the associated RKHS $\Hk$ can be arbitrarily well-approximated by the expansion $\sum_{j}^{N'} K_{z_j} y_j$, which, in turn, may be approximated as
\[
    g(x) \approx \sum_{j=1}^{N'} K(x,z_j) y_j \approx \sum_{j=1}^{N'} \Phi(x)^T \Phi(z_j) y_j = \Phi(x)^T \alpha,
\]
where $\alpha = \sum_{j=1}^{N'} \Phi(z_j) y_j \in \reals^d$. 
Applying this approximation for the spaces\footnote{To apply this decomposition to $W(x)$, we simply leverage vector-valued feature maps for each entry of $W(x)$.} $\Hk^f$, $\Hk^B$, $\Hw$ and $\Hwp$, we obtain finite-dimensional representations of the optimal $f,\{b_j\}, W$ that scale with the order of the matrix feature map, instead of $N_c \cdot n$. Indeed, these are the representations defined in eqs.~\eqref{param_1}--\eqref{param_2}. To complete the derivation of the cost terms $\hat{J}_d$ and $\hat{J}_m$ in problem~\eqref{learn_finite}, we address the functional regularization terms as follows. From the definition of the inner product in $\Hk$:
\[ 
    \|g\|_{\Hk}^2 = \sum_{i,j=1}^{N'} \ip{y_i}{K(z_i,z_j) y_j} \approx \sum_{i,j=1}^{N'} \ip{\Phi(z_i)y_i}{\Phi(z_j) y_j} = \ip{\sum_{i=1}^{N'}\Phi(z_i)y_i}{\sum_{j=1}^{N'}\Phi(z_j)y_j} = \|\alpha\|^2 \ .
\]
Prior to proceeding to the solution algorithm, we summarize the content since Section~\ref{sec:prob} until now. First, starting from the infinite-dimensional \emph{non-convex} constrained optimization over function spaces, i.e., problem~\eqref{prob_gen2}, we split the problem into alternating between two infinite-dimensional \emph{convex} sub-problems. Second, we derived the representation of the optimal solution of each sub-problem, i.e., Theorems~\ref{thm:rep_dyn} and~\ref{thm:rep_metric}, by leveraging the mild structural assumption on the input matrix, relaxation of the infinite-dimensional constraints to sampling-based constraints, and restriction of the function spaces to RKHS. Third, we used the universal approximation property of random matrix feature maps to reduce the dimensionality of the representation of the optimal solutions. Fourth, and finally, problem~\eqref{learn_finite} gives the final finite-dimensional re-formulation of problem~\eqref{prob_gen2} using this feature representation and the sample-based constraints.


\section{Solution Algorithm} \label{sec:soln}

The fundamental structure of the solution algorithm consists of alternating between the dynamics and metric sub-problems derived from problem~\eqref{learn_finite}. However, there are three fundamental challenges in solving these problems. First, enforcing the constraints exactly requires a feasible initialization (either with the dynamics or metric function parameters). Given our use of random matrix feature approximations, this is simply intractable. Second, even with an initial feasible guess, due to the alternation, one may lose feasibility within either of the sub-problems stated in Theorems~\ref{thm:rep_dyn} and~\ref{thm:rep_metric}. This is a fundamental numerical challenge of bilinear optimization. Third, since the constraint set $X_c$ includes at least the state samples from the demonstration tuples (plus additional random samples from $\X$), enforcing the LMIs over all constraint points at each iteration is again computationally intractable. 

To address these challenges, we present a solution algorithm that (i) eliminates the need for a feasible initial guess, thereby permitting cold-starts, (ii) leverages sub-sampling and \emph{dynamic} updating of the constraint set for each iteration sub-problem to maintain tractability, and (iii) performs feasibility corrections using efficient, unconstrained second-order optimization. We first give the full formulation of the relevant optimization sub-problems and then provide a pseudocode summary of the algorithm at the end. 

Let $X_c^{(k)}$ denote the finite sample constraint set at iteration $k$. In particular, $X_c^{(k)} \subset X_c$ with $N_c^{(k)}:= |X_c^{(k)}|$ being ideally much less than $N_c$, the cardinality of the full constraint set $X_c$. Formally, each major iteration $k$ is characterized by four minor steps (sub-problems):

\begin{enumerate}
\item {\bf Finite-dimensional dynamics sub-problem}:
\begin{subequations} \label{finite_dyn}
\begin{alignat}{2}
    &\min_{\substack{\alpha,\beta_j, j=1,\ldots,m, \\ s \geq 0}} \qquad && \sum_{i=1}^{N} \| \hat{f}(\xs)+\hat{B}(\xs)u_i - \dot{x}_i \|^2 + \mu_f \|\alpha - \alpha^{(k-1)}\|^2 \nonumber \\
    &\quad && + \mu_b \sum_{j=1}^{m} \|\beta - \beta_j^{(k-1)}\|^2+ \mu_s\|s\|_1 \\
    &\qquad \text{s.t.}  && \mathcal{F}_{\lambda+\epsilon_{\lambda}}(\xs;\alpha,\tilde{\theta}^{(k-1)}_{ij}) \preceq s(\xs)I_{n-m} \quad \forall \xs \in X_c^{(k)} \\
    & \qquad && s(\xs) \leq \bar{s}^{(k-1)}  \quad \forall \xs \in X_c^{(k)},
\end{alignat}
\end{subequations}
where $\{\alpha^{(k-1)}, \beta_j^{(k-1)}, \tilde{\theta}_{ij}^{(k-1)} \}$ are the dynamics and metric parameters obtained from the previous major iteration\footnote{We set $\alpha^{(0)}, \{\beta_j^{(0)}\}_{j=1}^{m}, \tilde{\theta}_{ij}^{(0)} = 0$.}, and $\mu_s \in (0,1)$ is an additional regularization parameter for $s$, a non-negative slack vector in $\reals^{N_c^{(k)}}$. The quantity $\bar{s}^{(k-1)}$ is defined as
\[
    \begin{split}
    \bar{s}^{(k-1)} &:= \max_{\xs \in X_c} \bar{\lambda} \left(\mathcal{F}_{\lambda+\epsilon_{\lambda}}^{(k-1)}(\xs)\right), \quad \text{where} \\
    \mathcal{F}_{\lambda+\epsilon_{\lambda}}^{(k-1)}(\xs) &:= \mathcal{F}_{\lambda+\epsilon_{\lambda}}(\xs;\alpha^{(k-1)},\tilde{\theta}^{(k-1)}_{ij}).
    \end{split}
\]
That is, $\bar{s}^{(k-1)}$ captures the worst violation for the stability LMI over the entire constraint set $X_c$, given the parameters at the end of iteration $k-1$. Denote $\alpha^{(k)}, \{\beta_j^{(k)}\}_{j=1}^{m}$ to be the optimizers of this sub-problem.

\item {\bf Compute an upper bound $\bar{s}^{(k)'}$ on the maximum allowed violation of the stability LMI for the current constraint set $X_c^{(k)}$}:
\begin{leftbox}
\begin{subequations} \label{metric_infeas}
\begin{alignat}{2}
&{\bf \texttt{while}} \quad  &&\min_{\xs \in X_c^{(k)}} \quad  \underline{\lambda} \left( W(\xs) \right) < \delta_{\wl}+\epsilon_{\wl} \\
& \qquad {\bf \texttt{do}} \quad  \min_{\tilde{\theta}_{ij}} \quad && \sum_{\xs \in X_c^{(k)}} \Bigg[ \psi_w \left( \bar{\lambda} \left( (\delta_{\wl}+\epsilon_{\wl})I_{n} - W(\xs) \right) \right) + \nonumber \\
& \qquad \qquad &&  \qquad \qquad +\mu_{w}' \psi_{\mathcal{F}} \left( \bar{\lambda} \left( \mathcal{F}_{\lambda+\epsilon_{\lambda}} (\xs; \alpha^{(k)},\tilde{\theta}_{ij} ) \right) \right) \Bigg] + \nonumber \\
& && + \mu_{w}' \sum_{i,j} \|\tilde{\theta}_{ij} - \tilde{\theta}_{ij}^{(k-1)} \|^2, \\
& \mu_{w}' \leftarrow 0.5 \mu_{w}' && 
\end{alignat}
\end{subequations}
\end{leftbox}
where $\mu_{w}' >0$ is a regularization parameter that is exponentially decayed until the \texttt{while} loop termination condition is met. The functions $\psi_w(\cdot)$ and $\psi_{\mathcal{F}}(\cdot)$ are penalty functions. Let $\tilde{\theta}_{ij}'$ be the metric parameters when the termination condition is met. We then set
\begin{equation}
	\bar{s}^{(k)'} := \max_{\xs \in X_c^{(k)}} \bar{\lambda} \left( \mathcal{F}_{\lambda+\epsilon_{\lambda}} (\xs ; \alpha^{(k)}, \tilde{\theta}'_{ij}) \right). 
\end{equation}
The primary objective of this step is to compute an upper bound on the stability LMI over the constraint set $X_c^{(k)}$ with respect to a dual metric that satisfies the positive definiteness condition at all points in this set. This upper-bound will then be used as a constraint within the metric sub-problem, described next. The reason one cannot use $\bar{s}^{(k-1)}$ as in the dynamics sub-problem is because this value is computed over the entire constraint set at the end of the previous iteration, with a metric that is potentially \emph{not} positive definite at all points. Since the metric sub-problem strictly enforces the positive definiteness constraint on $W$, $\bar{s}^{(k-1)}$ is not a valid upper bound on the stability LMI constraint, and can lead to infeasibility for the metric sub-problem. As long as there exists a set of $\tilde{\theta}_{ij}$ such that the dual metric satisfies the positive definiteness constraint at all points, the \texttt{while} loop above will always terminate given a sufficiently small $\mu_w'$. In our implementation, we initialize $\mu_w'$ to be equal to $\mu_w$. In Section~\ref{sec:metric_infeas} we provide an efficient second-order method for solving this \emph{unconstrained} optimization. 

\item {\bf Finite-dimensional metric sub-problem}:
\begin{subequations}\label{finite_met}
\begin{align}
    \min_{\tilde{\theta}_{ij},\wl,\wu,  s \geq 0} \quad & (\wu - \wl) + \mu_w \sum_{i,j} \|\tilde{\theta}_{ij} - \tilde{\theta}_{ij}^{(k-1)} \|^2 + (1/\mu_s)\|s\|_1 \\
    \text{s.t.} \quad & \mathcal{F}_{\lambda+\epsilon_{\lambda}} (\xs;\alpha^{(k)},\tilde{\theta}_{ij}) \preceq s(\xs)I_{n-m}  \quad \forall \xs \in X_c^{(k)} \\
    \quad & s(\xs) \leq \bar{s}^{(k)'} \quad \forall \xs \in X_c^{(k)} \\
    \quad &  (\wl + \epsilon_{\wl})I_{n} \preceq W(\xs) \preceq \wu I_n \quad \forall \xs \in X_c^{(k)}, \\
    \quad & \wl \geq \delta_{\wl}.
\end{align}
\end{subequations}

\item {\bf Update $X_c^{(k)}$ sub-problem}. 

Choose a tolerance parameter $\delta>0$. Then, define
    \begin{equation}
        \nu^{(k)}(\xs) := \max \left\{ \bar{\lambda} \left(\mathcal{F}_{\lambda+\epsilon_{\lambda}}^k(\xs)\right) , \bar{\lambda} \left((\delta_{\wl}+\epsilon_{\delta})I_n - W(\xs) \right) \right \} \quad \forall \xs \in X_c,
    \label{constraint_viol}
    \end{equation}
    and set
    \begin{equation}
        X_{c}^{(k+1)} :=  \left\{ \xs \in X_c^{(k)} : \nu^{(k)}(\xs) > -\delta \right\} \bigcup  \left\{\xs \in X_c \setminus X_c^{(k)} : \nu^{(k)}(\xs) > 0 \right\}. 
        \label{Xc_up}
    \end{equation}
Thus, in the update $X_c^{(k)}$ step, we balance addressing points where constraints are being violated ($\nu^{(k)} > 0$) and discarding points where constraints are satisfied with sufficient strict inequality ($\nu^{(k)}\leq -\delta$). This prevents overfitting to any specific subset of the constraint points. A potential variation to the union above is to only add up to $L$ constraint violating points from $X_c\setminus X_c^{(k)}$ (e.g., corresponding to the $L$ worst violators), where $L$ is a fixed positive integer. Indeed this is the variation used in our experiments and was found to be extremely efficient in balancing the size of the set $X_c^{(k)}$ and thus, the complexity of each iteration. This adaptive sampling technique is inspired by \emph{exchange algorithms} for semi-infinite optimization, as the one proposed in~\citep{ZhangWuEtAl2010} where one is trying to enforce the constraints at \emph{all} points in a compact set $\X$.
\end{enumerate}

The use of the modified regularization terms above (i.e., penalizing change in the parameters from the previous iteration as opposed to absolute penalty) is, in the spirit of trust region methods, to prevent large updates to the parameters due to the dynamically updating constraint set $X_c^{(k)}$. At the first major iteration of the problem however, with $\alpha^{(0)}, \{\beta_j^{(0)}\}_{j=1}^{m}, \tilde{\theta}_{ij} = 0$, this is the same as an absolute regularization term. To ensure non-degeneracy in the stability LMI constraint within the first dynamics sub-problem iteration, we initialize the algorithm with $W(\xs) = I_n$. The full pseudocode for the CCM-Regularized (CCM-R) dynamics learning algorithm is summarized in Algorithm~\ref{alg:final}.

\begin{algorithm}[h!]
  \caption{CCM - Regularized (CCM-R) Dynamics Learning}
  \label{alg:final}
  \begin{algorithmic}[1]
  \State {\bf Input:} Dataset $\{\xs,\us,\dot{x}_i\}_{i=1}^{N}$, constraint set $X_c$, regularization constants $\{\mu_f,\mu_b,\mu_w, \mu_s \}$, constraint tolerances $\{\epsilon_\lambda,\delta_{\wl},\epsilon_{\wl} \}$, discard tolerance parameter $\delta$, Initial \# of constraint points: $N_c^{(0)}$, Max \# iterations: $N_{\max}$, termination tolerance $\varepsilon$. 
   \State $k \leftarrow 1$, \texttt{converged} $\leftarrow$ \textbf{false}, $W(x) \leftarrow I_n$, $ \{\alpha^{(0)}, \{\beta_j^{(0)}\}_{j=1}^{m}, \tilde{\theta}_{ij} \} \leftarrow 0$.
   \State $X_c^{(0)} \leftarrow \textproc{RandSample}(X_c,N_c^{(0)})$ \label{line:rand_samp_init}
   \While {$\neg \texttt{converged} \wedge k<N_{\max} $} 
    \State $\{\alpha^{(k)}, \{\beta_j^{(k)}\} \} \leftarrow \textproc{Solve}$~\eqref{finite_dyn}
    \State $\bar{s}^{(k)'} \leftarrow \textproc{Solve}$~\eqref{metric_infeas}
    \State $\{\tilde{\theta}_{ij}^{(k)},\wl,\wu\} \leftarrow \textproc{Solve}$~\eqref{finite_met}
    \State $X_c^{(k+1)}, \bar{s}^{(k)}, \nu^{(k)} \leftarrow$ \textproc{Update} $X_c^{(k)}$ using~\eqref{Xc_up}
    \State {\small $\Delta \leftarrow \max\left\{\|\alpha^{(k)}-\alpha^{(k-1)}\|_{\infty},\|\beta_j^{(k)}-\beta_j^{(k-1)}\|_{\infty},\|\tilde{\theta}_{ij}^{(k)}-\tilde{\theta}_{ij}^{(k-1)}\|_{\infty}\right\}$}
    \If{$\Delta < \varepsilon$ \textbf{or} $\nu^{(k)}(\xs) < \varepsilon \quad \forall \xs \in X_c$}
        \State \texttt{converged} $\leftarrow$ \textbf{true}.
    \EndIf
    \State $k \leftarrow k + 1$.
  \EndWhile
      \end{algorithmic}
\end{algorithm} 

\revision{Some comments are in order. First, convergence in Algorithm~\ref{alg:final} is declared if either progress in the solution variables stalls or all constraints are satisfied within tolerance. Using dynamic sub-sampling for the constraint set at each iteration implies that (i) the matrix function $W(x)$ at iteration $k$ resulting from variables $\tilde{\theta}_{ij}^{(k)}$ does \emph{not} have to correspond to a valid dual metric for the interim learned dynamics at iteration $k$, and (ii) a termination condition based on constraint satisfaction at all $N_c$ points is justified by the fact that at each iteration, we are solving relaxed sub-problems that collectively generate a sequence of lower-bounds on the overall objective. 

Second, as a consequence of this iterative procedure, the dual metric and contraction rate pair $\{W(x),\lambda\}$ do not possess any sort of ``control-theoretic'' optimality. For instance, in~\citep{SinghMajumdarEtAl2017}, for a known stabilizable dynamics model, both these quantities are optimized for robust control performance. In this work, these quantities are used solely as \emph{regularizers} to \emph{promote} stabilizability of the learned model. Following the recovery of a fixed dynamics model from the algorithm, one may leverage global CCM optimization algorithms as the one presented in~\citep{SinghMajumdarEtAl2017} to compute an optimal \emph{robust} CCM.}

\subsection{Solving the Upper Bound Sub-Problem}\label{sec:metric_infeas}

Step 2 in the iterative algorithm outlined in the previous section entails solving a non-smooth, but convex, unconstrained optimization to compute a viable upper bound on the stability LMI for its inclusion as a constraint within the metric sub-problem. While the problem can be formulated as an SDP through the use of epigraph LMIs, this would result in yet another computationally intensive step. Instead, recognizing that the purpose of this part of the algorithm is \emph{not} to update any problem variables, one can use faster unconstrained optimization techniques to approximately solve this problem. 

Specifically, we employ Newton descent to solve this problem with backtracking line search. The issue however, lies in taking the second order derivatives of the maximum eigenvalue function of an affinely parameterized matrix (recall $W$ is linear in $\tilde{\theta}_{ij}$ and and $\Fl$ is linear in $\tilde{\theta}_{ij}$ for fixed $\alpha$). For a given affinely parameterized symmetric matrix $G(\theta) \in \reals^{d\times d}$, the gradient and Hessian of $\bar{\lambda}(G(\theta))$ with respect to $\theta$ has the components~\citep{OvertonWomersley1995}:
\[
\begin{split}
	\dfrac{\partial \bar{\lambda}(G)}{\partial \theta^i}(\theta) &= \bar{v}_1^T \dfrac{\partial G(\theta)}{\partial \theta^i} \bar{v}_1, \\
	\dfrac{ \partial^2 \bar{\lambda}(G)}{\partial \theta^i \partial \theta^j}(\theta) &= 2 \sum_{k>1}^{d} \dfrac{\bar{v}_1^T \frac{\partial G(\theta)}{\partial \theta^i} \bar{v}_k \cdot \bar{v}_1^T \frac{\partial G(\theta)}{\partial \theta^j} \bar{v}_k }{\lambda_1 - \lambda_k},
\end{split}
\]
where we assume the eigenvalue ordering $\lambda_1 \geq \cdots \geq \lambda_d$, and $\bar{v}_1,\ldots,\bar{v}_d$ are the associated normalized eigenvectors. Clearly, the Hessian is undefined when the multiplicity of the max eigenvalue is greater than one. While there exist several works within the literature on modified parameterizations of the matrix to allow the use of second order methods, these techniques require knowing the multiplicity of the max eigenvalue at the optimal parameters -- which is impossible to guess in this case. 

Instead, we leverage a stochastic smoothing approximation borrowed from~\citep{DAspremontKaroui2014}, whereby one approximates the max eigenvalue of $G(\theta)$ as
\[
	\max_{i = 1,\ldots,k} \bar{\lambda} \left( G(\theta) + \dfrac{\sigma}{d} z_i z_i^T \right),
\] 
where $\sigma > 0$ is a small noise parameter, and $z_i$ are i.i.d. samples from $\mathcal{N}(0, I_d)$. In particular, $\bar{\lambda}\left( G(\theta) + \frac{\varepsilon}{n} z_i z_i^T \right)$ is unique with probability one, for any $z_i \sim \mathcal{N}(0,I_d)$ (see~\citep{DAspremontKaroui2014}, Prop.~3.3 and Lemma 3.4). While the algorithm in~\citep{DAspremontKaroui2014} leverages a finite-sample expectation of the randomized function above and its gradient, for our implementation, we simply leverage the uniqueness property induced by the use of the random rank one perturbation, and compute the gradient and Hessian expressions using the eigenvalue decomposition of the matrix $G(\theta) + (\varepsilon/d)zz^T$, where $z$ is a single Gaussian vector sample. Additionally, we employ early termination of the Newton iterations if the \texttt{while} loop termination condition is met by the current iterate. 


\subsection{Revisiting PVTOL} \label{sec:results}

We now revisit the results presented in Section~\ref{sec:pvtol_sim} and shed additional light on the performance of the CCM-R model. 

The training dataset was generated in three steps. First, a fixed set of waypoint paths in $(p_x,p_z)$ was randomly generated. Second, for each waypoint path, multiple smooth polynomial splines were fitted using a minimum-snap algorithm. To create variation amongst the splines, the waypoints were perturbed within Gaussian balls and the time durations for the polynomial segments were also randomly perturbed. Third, the PVTOL system was simulated with perturbed initial conditions and the polynomial trajectories as references, and tracked using a sub-optimally tuned PD controller; thereby emulating a noisy/imperfect demonstrator. These final simulated paths were sub-sampled at $0.1$ s resolution to create the datasets. The variations created at each step of this process were sufficient to generate a rich exploration of the state-space for training. 

The matrix feature maps used for all models (N-R, R-R, and CCM-R) are derived from the random matrix feature approximation of the Gaussian separable kernel; the relevant parameters are provided in Appendix~\ref{app:prob_params}. We enforced the CCM regularizing constraints for the CCM-R model at $N_c = 2426$ points in the state-space, composed of the $N$ demonstration points in the training dataset and randomly sampled points from $\X$ (recall that the CCM constraints do not require samples of $u,\dot{x}$). The contraction rate $\lambda$ was set to 0.1 and the termination tolerance $\varepsilon$ was set to $0.01$. All other tolerance parameters are provided in Appendix~\ref{app:prob_params}.

As the CCM constraints were relaxed to hold pointwise on the finite constraint set $X_c$ as opposed to everywhere on $\X$, in the spirit of viewing these constraints as regularizers for the model, we simulated both the R-R and CCM-R models using the time-varying Linear-Quadratic-Regulator (TV-LQR) feedback controller. This also helped ensure a more direct comparison of the quality of the learned models themselves, independently of the tracking feedback controller. The results are virtually identical using a tracking MPC controller and yield no additional insight.

Figure~\ref{fig:sim_train_curves} plots the training curves generated by the CCM-R algorithm for varying demonstration dataset sizes $N$. In particular, we plot (i) the evolution of the distribution of the constraint violation vector $\nu$ defined in~\eqref{constraint_viol} over the training constraint set $X_c$, and (ii) evolution of the regression error and fraction of violations (i.e., average number of points with $\nu > 0$) over an independent validation set, as a function of global iteration $k$. The dynamic constraint set was initialized with 250 points (sampled randomly from the 2426 constraint points in the full training constraint set), and over all iterations, the size of $X_c^{(k)}$ remained below 400 points -- a drastic factor of improvement over an intractable brute-force approach. For $N\in \{100,250,500\}$, the algorithm terminated with all constraints satisfied below the termination threshold of $\varepsilon = 0.01$. For $N=1000$, we manually terminated the algorithm after 12 global iterations after observing a stall in the number of violations. 

\begin{figure}[h]
\centering
\begin{subfigure}[t]{0.7\textwidth}
	\includegraphics[width=1\textwidth]{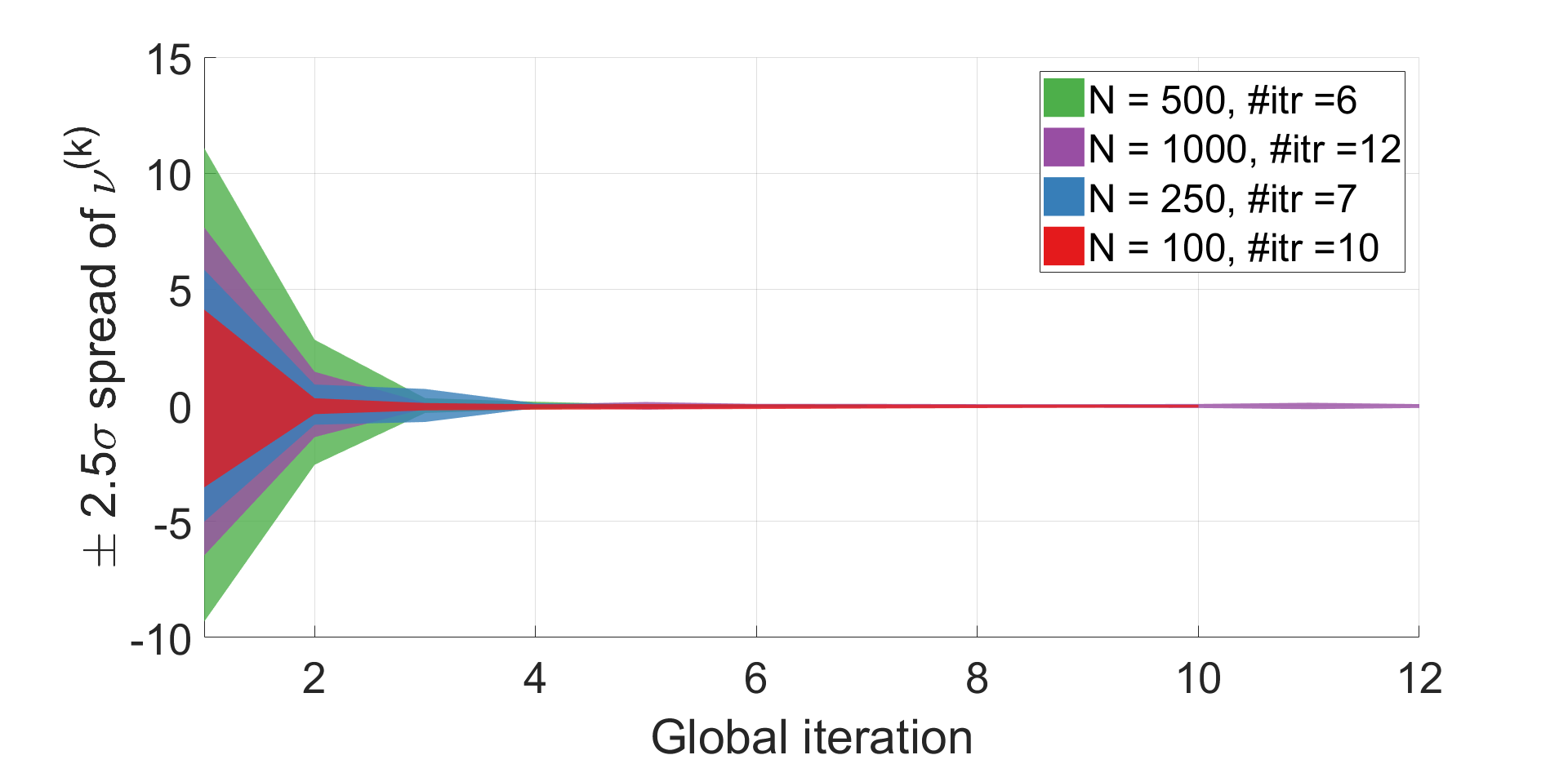}
	\caption{$\pm 2.5 \sigma$ spread of the constraint violation vector $\nu$ over \emph{training set} as a function of global iteration. $\#$\texttt{itr} denotes the total number of global iterations. }
	\label{fig:sim_curve_viol}
\end{subfigure} \qquad 
\begin{subfigure}[t]{0.7\textwidth}
	\includegraphics[width=1\textwidth]{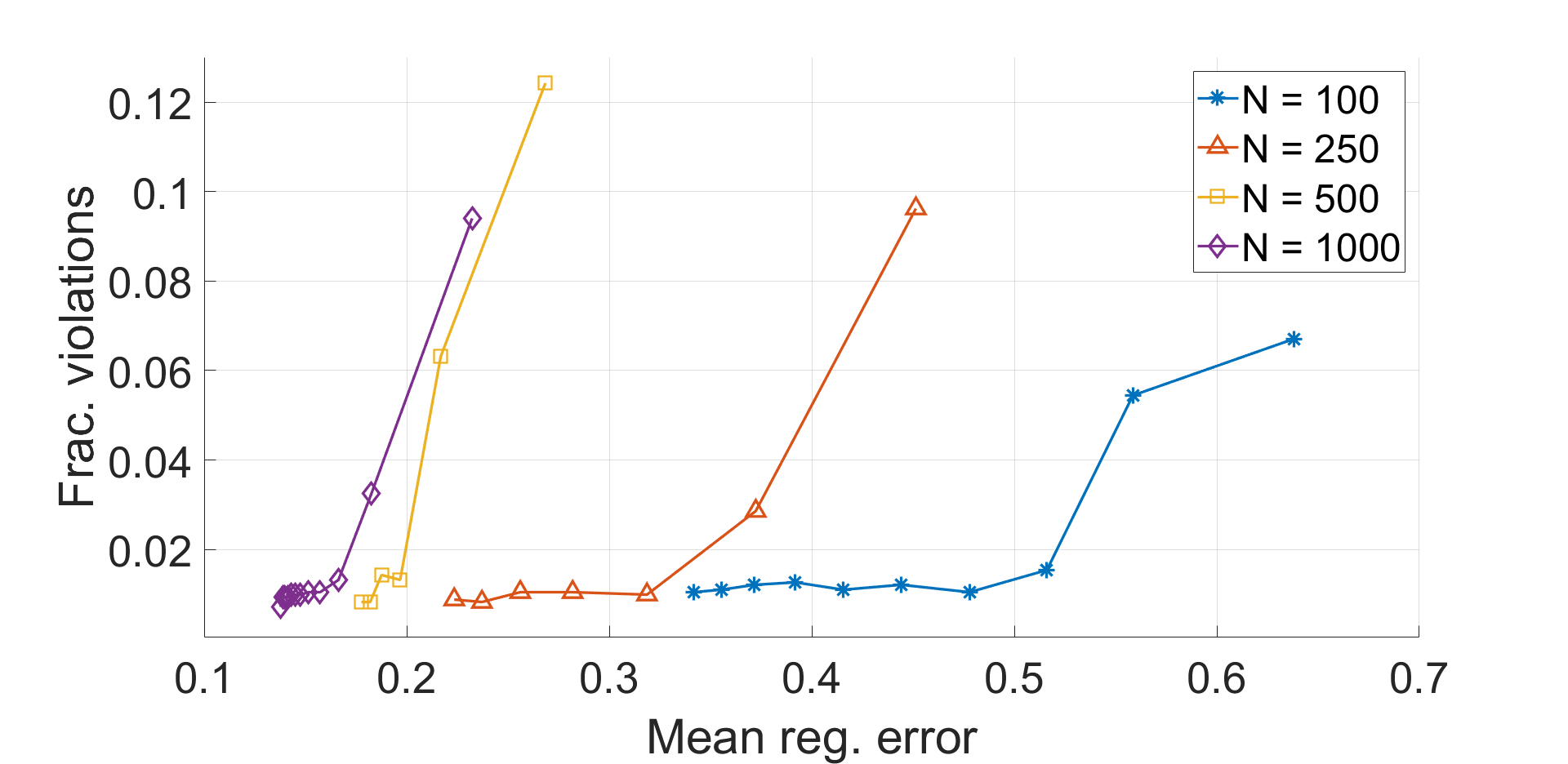}
	\caption{Evolution of mean regression error norm and fraction of violations over \emph{validation} set with global iteration number. The markers delineate the iterations. The curves proceed from right to left.}	
	\label{fig:sim_curve_reg}
\end{subfigure}
	\caption{Simulation data training curves for all CCM-R models.}
	\label{fig:sim_train_curves} 
\end{figure}
From Figure~\ref{fig:sim_curve_viol}, we see the effectiveness of the sub-sampling constraint set method in efficiently eliminating all training violations. Several comments are in order regarding Figure~\ref{fig:sim_curve_reg}. First, as the number of demonstration tuples $N$ goes down, the curves shift further to the right, indicating higher validation error, as expected. Second, the iterates seem to clearly exhibit a two-phase convergent behavior. During the first phase, the fraction of violations on the validation set drops rapidly and approaches a steady-state. During the second phase, the fraction of violations remains roughly constant as the validation error drops monotonically, with decreasing drop rate. The training and final validation errors for all models are summarized in Table~\ref{tab:model_reg}.

\begin{table}[H]\centering
\begin{tabular}{@{}lccccccccc@{}}
\toprule
& \multicolumn{2}{c}{\textbf{N-R}} & \phantom{a} & \multicolumn{2}{c}{\textbf{R-R}} & \phantom{a} &  \multicolumn{3}{c}{\textbf{CCM-R}} \\
\cmidrule{2-3} \cmidrule{5-6} \cmidrule{8-10}
$N$ & Train err.  & Val err. && Train err. & Val err. && Train err. & Val err. & Frac. viol. \\
\midrule
100 & 0.001 & 0.072 && 0.125 & 0.380 && 0.012 & 0.342 & 0.0008 (0.010) \\
250 & 0.003 & 0.088 && 0.125 & 0.243 && 0.010 & 0.223 & 0.0004 (0.009) \\
500 & 0.004 & 0.047 && 0.092 & 0.172 && 0.006 & 0.178 & 0.0016 (0.008) \\
1000 & 0.01 & 0.056 && 0.08 & 0.146 && 0.003 & 0.138 & 0.0021 (0.007) \\
\bottomrule
\end{tabular}
\caption{Comparison of average (over $N$ tuples) training and validation (over 2000 tuples) regression error norms for all 3 models. Also shown are the fraction of violations ($\nu>0$) on the training (validation) sets for the CCM-R models. The termination threshold for CCM-R training was $\nu < \varepsilon = 0.01$ for all points in the training constraint set. }
\label{tab:model_reg}
\end{table}

An interesting trend to observe here is that the training errors for the N-R and CCM-R models are significantly closer (by an order of magnitude) together than R-R and N-R. Consistent with the trend in Figure~\ref{fig:mu_vs_reg}, N-R features the smallest validation errors. Thus, while the CCM-R model manages to perform well in terms of regression error on the training set (almost on par with N-R), the resulting model does not suffer from the overfitting trend observed with the N-R model.

Surprisingly, the final validation errors of the CCM-R models and the R-R models are almost identical. {\it Despite this however, the CCM-R model significantly outperforms the R-R model, especially when learned from smaller supervised datasets}. What sets the CCM-R model apart is the manner in which the dynamics sub-problem improves the regression error over multiple iterations -- through the joint minimization of regression error and stability LMI violations. Effectively, as seen in Figure~\ref{fig:sim_curve_reg}, the iterates essentially move along a level set of the constraint violations, decreasing regression error while maintaining control over the \emph{stabilizability of the learned model}. This validates the benefit of using a control-theoretic regularization technique that is \emph{tailored} to the motion planning task, and the fragility of \emph{only} using traditional measures of performance (such as regression error) for evaluating learned dynamical models\footnote{Code for data generation, training, and evaluation is provided at \url{https://github.com/StanfordASL/SNDL}.}. 

While the analytical PVTOL model studied in simulation is indeed an example of a system that is CCM stabilizable (see~\citep{SinghMajumdarEtAl2017}), in the next section we deploy the algorithm on a full 3D quadrotor testbed, shown in Figure~\ref{fig:px4_stock}, with partially closed control loops to approximately emulate the PVTOL dynamics. 

\section{Validation on Quadrotor Testbed}\label{sec:pvtol_exp}

Our quadrotor testbed consists of (i) a standard DJI F330 frame, (ii) a Pixhawk autopilot running the estimator and lower-level thrust and angular rate controllers, and (iii) a companion on-board ODROID-XU4 computer running ROS nodes for motion planning and trajectory tracking controller\footnote{The code for these ROS nodes is available for download at \url{https://github.com/StanfordASL/asl_flight}.} (which generates the thrust and angular rate setpoints for Pixhawk). There is also an Optitrack motion capture system providing inertial position and yaw estimates at 120~Hz, which is fused with the onboard EKF on the Pixhawk.
\begin{figure}[h]
\centering
	\includegraphics[width=0.8\textwidth]{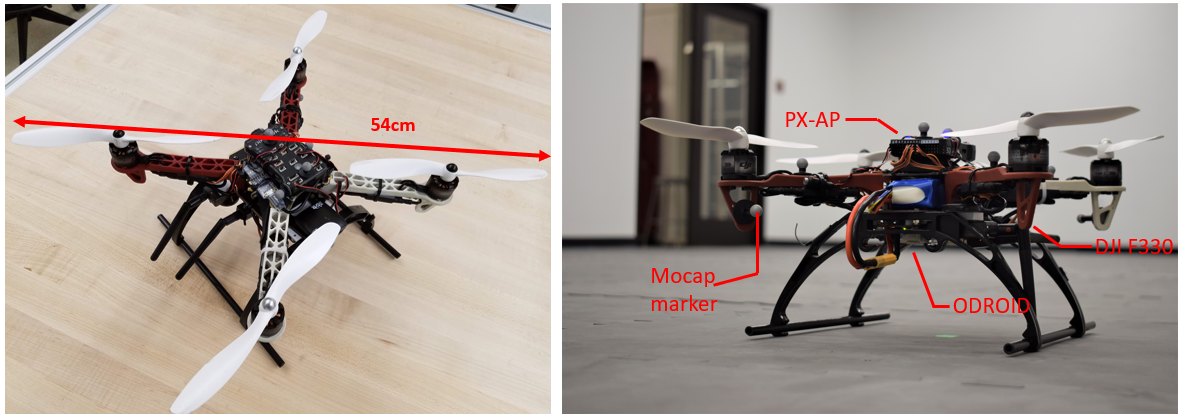}
	\caption{Quadrotor experimental platform, equipped with Pixhawk autopilot (PX-AP) for low-level (thrust and angular rate) control, and ODROID companion computer for planning and trajectory tracking control.}
	\label{fig:px4_stock}
\end{figure}

The objective of the flight experiments was to verify whether the performance trends observed in simulation for a relatively simple dynamics model, that was a priori known to be CCM stabilizable, carried over to a far more complex setting where this is unknown. The dynamics to be learned are for the motion of the quadrotor \emph{within the vertical plane}. Accordingly, the training data consisted of samples from trajectories flown within the plane autonomously using a nonlinear trajectory tracking controller. For evaluation, the generated planar trajectories will be tracked using a combination of an \emph{in-plane} controller, derived from the learned dynamics model, and an \emph{out-of-plane} controller derived from known principles (detailed in Section~\ref{sec:quad_oop}). The purpose of the out-of-plane controller is simply to keep the quad aligned and fixed within the vertical plane, thereby permitting an evaluation of the quality of the learned planar dynamics. 

This is a complex system subject to noise and complex nonlinear disturbances stemming from aerodynamic effects (e.g., ground-effect) and residual out-of-plane motion, and therefore constitutes a challenging evaluation benchmark.

\medskip

\noindent {\bf State and control parameterization}: We adopt the NED body-frame convention and the $XYZ$-Euler angle sequence parameterized by, in order, roll ($\phi$), pitch ($\theta$), and yaw ($\psi$) angles. For simplicity, we fixed the nominal yaw angle to $0$ and inertial $X$-position to be constant so that the planar motion is aligned with the inertial $Y$-$Z$ plane. The state representation used for the planar dynamics is therefore: $x = (p_y, p_z, \phi, v_y, v_z, \omega_x)$ where $(p_y,p_z)$ are the inertial position in the $Y$-$Z$ plane, $(v_y,v_z)$ are the body-frame velocities, and $\omega_x$ is the body-frame angular rate. The control inputs are set to be the desired net normalized thrust $\tau$ and desired body rate $\dot{\phi}_c$. These inputs are fed into the PX4 autopilot which uses an on-board faster rate control loop to realize these commands. 

The supervisory signal $\dot{x}$ is provided by $(\dot{p}_y, \dot{p}_z, \omega_x, \dot{v}_y, \dot{v}_z, \dot{\omega}_x)$. Note that by kinematics, $\omega_x = \dot{\phi}\cos(\theta)\cos(\psi) + \dot{\theta}\sin(\psi)$. Thus, any non-zero $\psi$, and/or non-zero $\dot{\theta}$ with non-zero $\psi$ can introduce bias errors into the training data. The signals $\dot{\omega}_x$ and $(\dot{v}_y, \dot{v}_z)$ were obtained by convolving a finite-difference derivative estimate through a moving-average filter. 

\subsection{Model Training}

\noindent{\bf Training data collection}: To collect the training data, we created a set of training trajectories within the plane consisting of clockwise and counter-clockwise circles at varying speeds and ``swoop" trajectories (see Figure~\ref{fig:circle_swoop_traces}) where the quadrotor swoops towards the ground in a parabolic arc and travels in a straight line close to the ground. The goal of the swoops was to learn data pertaining to ground-effect aerodynamic disturbances. All trajectories were tracked using a CCM-based feedback controller derived from a nominal model of a quadrotor that \emph{neglects any aerodynamic disturbances.} That is, the tracking controller and designed trajectories do not pre-compensate for any aerodynamic disturbances. Instead, we would like for such strategies to evolve directly from the learned model and subsequently derived tracking controller. 
\begin{figure}[h]
\centering
    \begin{minipage}[t][][b]{0.35\textwidth}
	    \centering\includegraphics[height=7cm,trim=0 0 5cm 0, clip]{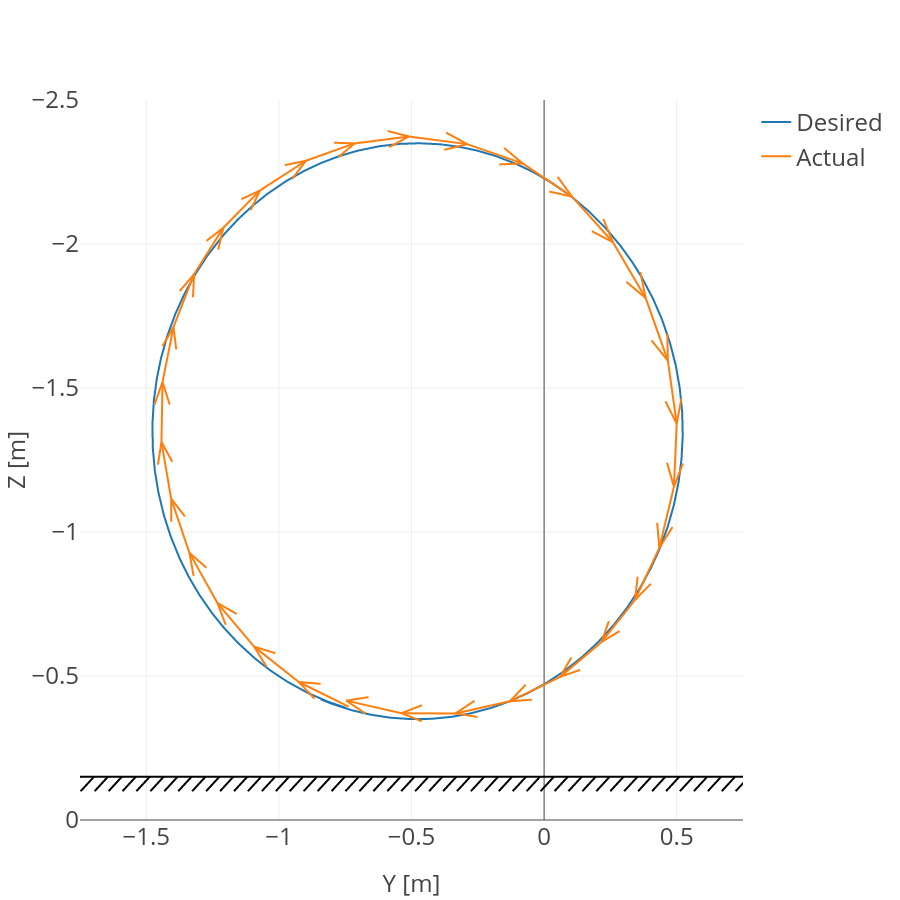}
	\end{minipage}\hspace{5em}
	\begin{minipage}[t][][b]{0.45\textwidth}
    	\centering\includegraphics[height=7cm]{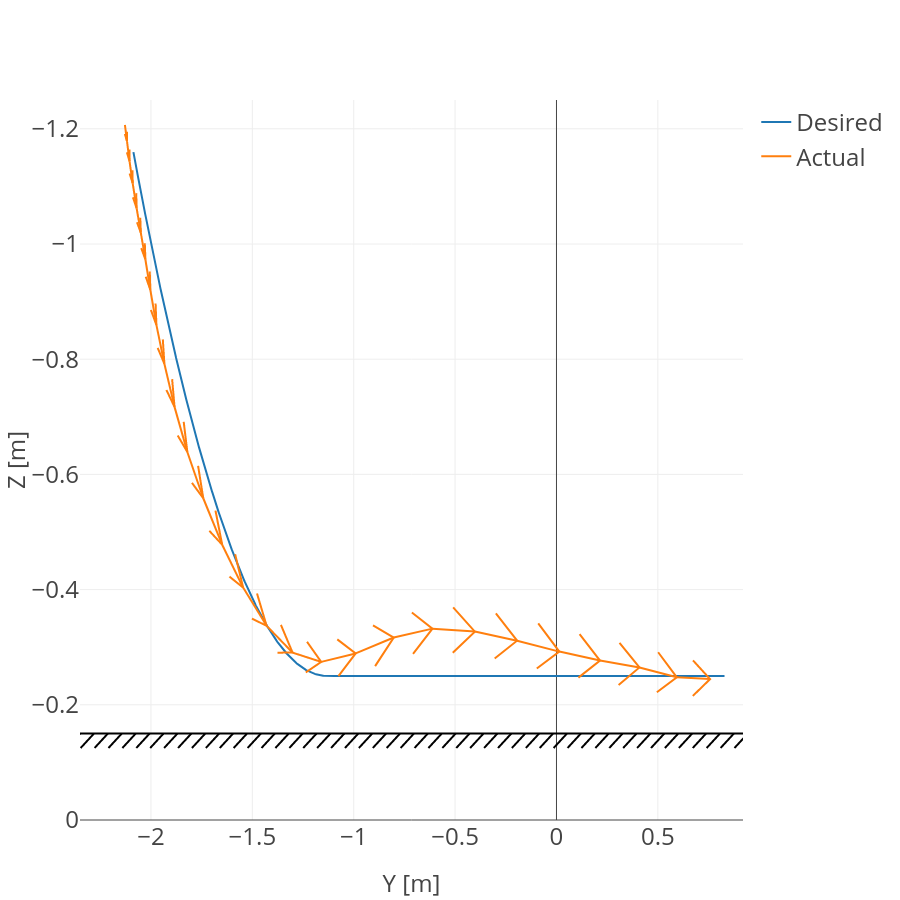}
	\end{minipage}
	\caption{Examples of flown trajectories with a 3D nonlinear tracking feedback controller to create the training dataset. Left: clockwise circle with radius 1 m, period 6 s, and a nominal speed of 1.05 m/s. Right: Swoop trajectory to excite the ground-effect aerodynamic disturbances. Notice the deviation of the quadrotor from the desired trajectory as it approaches the ground. Ideally, a model learned from such data should generate nominal control signals that compensate for such effects.}
	\label{fig:circle_swoop_traces}
\end{figure}

We collected data from 3 clockwise and 3 counter-clockwise 1~m radius circles at periods of $10, 8$ and $6$ seconds (corresponding to nominal speeds of 0.63, 0.79, and 1.05~m/s), and two symmetric swoop trajectories.  Figure~\ref{fig:circle_errors} shows the yaw and inertial $X$ tracking errors for the fastest clockwise training circle. Both these quantities pertain to the out-of-plane motion and need to be small in order to minimize any induced bias within the planar motion, which is significantly coupled with the out-of-plane motion. This is indeed observed to be the case, validating the suitability of the collected data to train a planar dynamics model. 
\begin{figure}[h]
    \centering
    \includegraphics[width=0.7\textwidth, trim=0 0 0 3cm, clip]{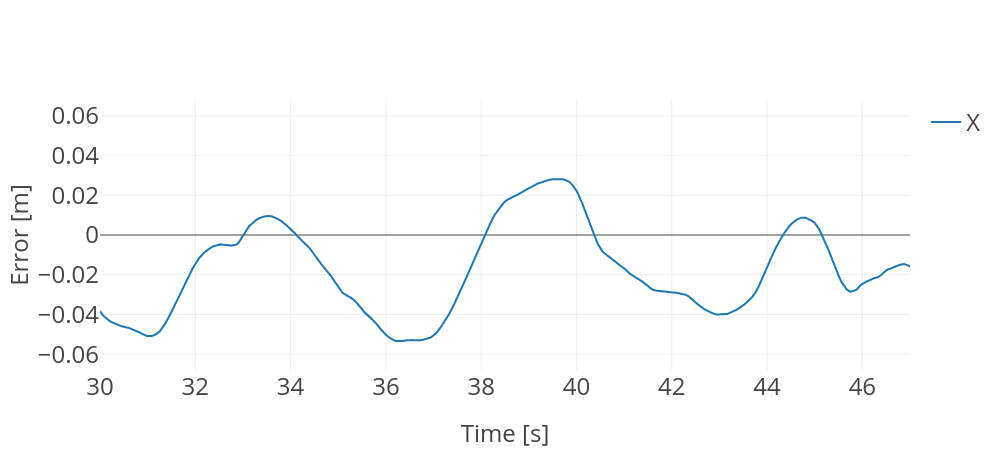}\\
    \includegraphics[width=0.7\textwidth, trim=0 0 0 3.25cm, clip]{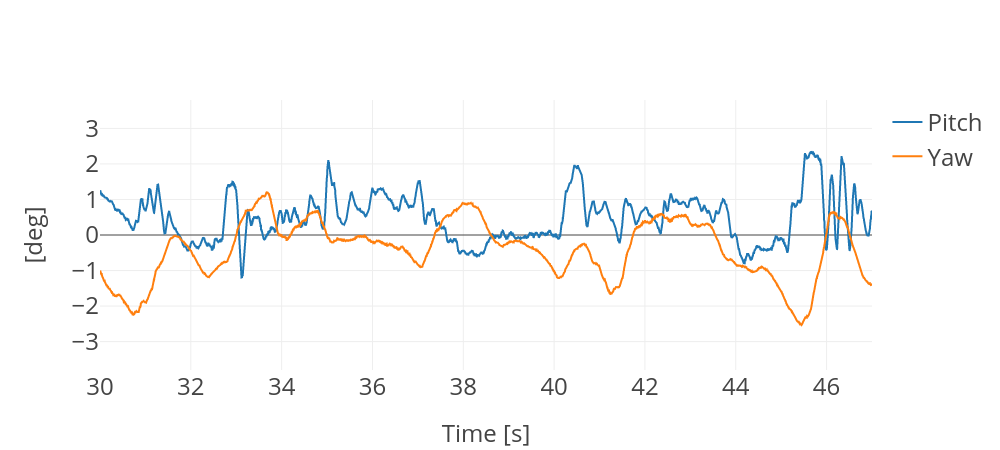}
    \caption{Out-of-plane errors, i.e., inertial $X$ motion and pitch and yaw angles. All errors are sufficiently small, validating the use of the remaining data to train a planar dynamics model.}
	\label{fig:circle_errors}
\end{figure}
Figure~\ref{fig:circle_swoop_traces} shows a trace of a swoop training trajectory. Note the clear influence of ground-effect on the $Z$-tracking error as the quadrotor dips towards the ground and settles into the straight line path. These are exactly the sort of effects we wish to learn. 

The complete dataset consists of approximately 2 minutes of flight time, logged at 250 Hz. After filtering the finite-difference derivative estimates, we further sub-sampled the data to avoid aliasing effects and extracted training and validation datasets of  1000 and 3000 $(x,u,\dot{x})$ tuples respectively. 

\medskip

\noindent{\bf Models Learned}: To investigate the hypothesized trend where the control-theoretic regularization from the CCM approach dominates for small supervised training dataset sizes, we learned R-R and CCM-R models at $N=150$ and $N=1000$. For the CCM-R models, the full training constraint set $X_c$ comprised of the state samples from the demonstration tuples, plus up to 1600 additional state samples from the collected trajectories, and 500 independently sampled Halton points within the state-space. In total, for both the $N=150$ and $N=1000$ models, a total of 3100 constraint points were used. 

The dimensionality of the feature space for $\hat{f},\{\hat{b}_j\}_{j=1}^{2},$ and $W$ were held the same as from the PVTOL simulations (corresponding to $\alpha, \beta_j \in \reals^{576}$, and $\tilde{\theta}_{ij} \in \reals^{72}$). The constant $\mu_f$ was held fixed at $10^{-3}$ for both R-R and CCM-R models and both $N$, and $\mu_b$ was set to $10^{-2}$. The need for a higher regularization constant for $\beta_j$ stemmed from the high noise content in the $\dot{\phi}_c$ and $\dot{\omega}_x$ signals. The constant $\mu_w$ was set to $10^{-3}$ for $N=150$ and $10^{-4}$ for $N=1000$, as in the PVTOL simulations. Figure~\ref{fig:hard_train_curves} gives the training curve plots for the $N=150$ and $N=1000$ CCM-R models, and Table~\ref{tab:model_H_reg} summarizes the final measures of performance. The contraction rate $\lambda$ was again set to $0.1$ and all other tolerances from the simulations were held the same. CCM-R models converged in $5$ iterations for $N=150$ and $4$ iterations for $N=1000$. 

\begin{figure}[h]
\centering
\begin{subfigure}[t]{0.7\textwidth}
	\includegraphics[width=1\textwidth]{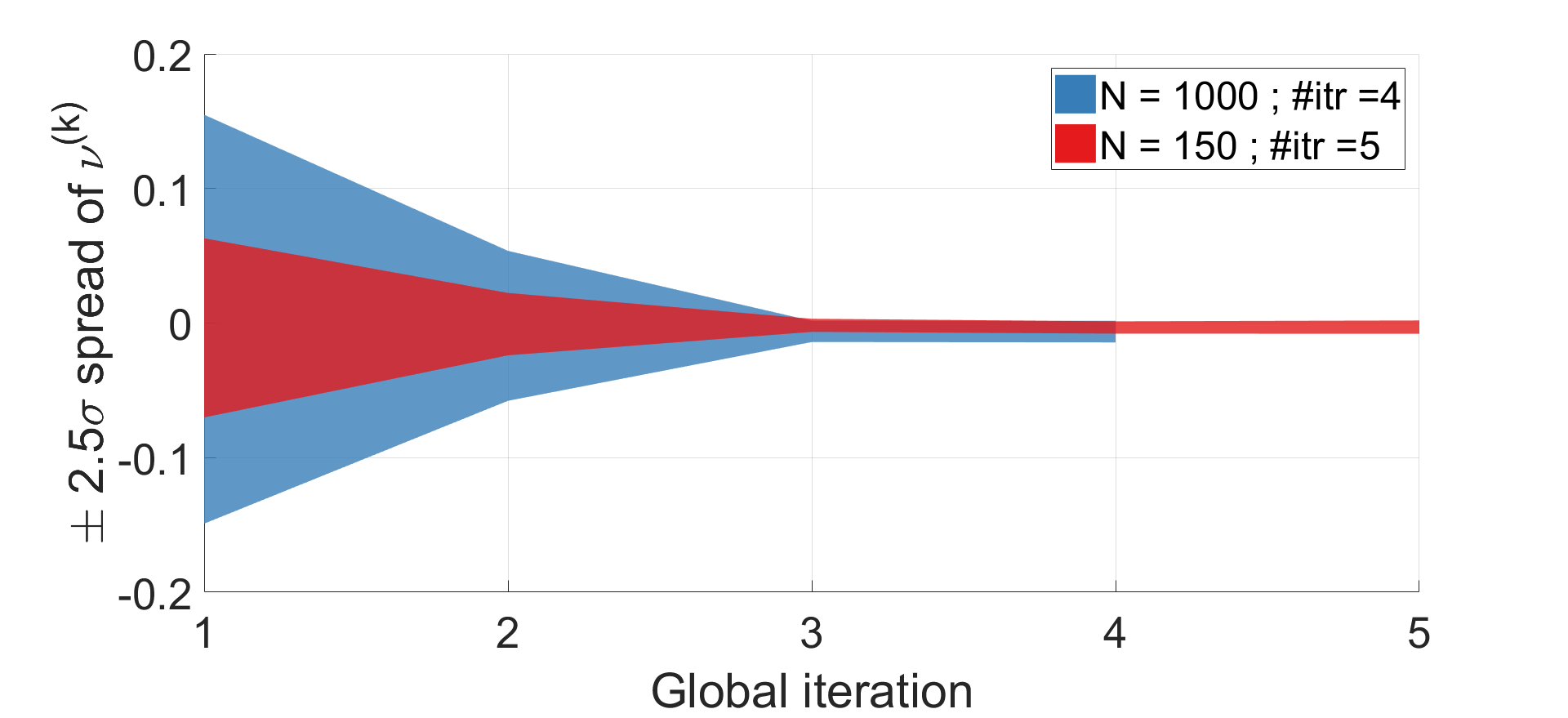}
	\caption{$\pm 2.5 \sigma$ spread of the constraint violation vector $\nu$ over \emph{training set} as a function of global iteration. $\#$\texttt{itr} denotes the total number of global iterations. }
	\label{fig:sim_curve_viol}
\end{subfigure} \qquad 
\begin{subfigure}[t]{0.7\textwidth}
	\includegraphics[width=1\textwidth]{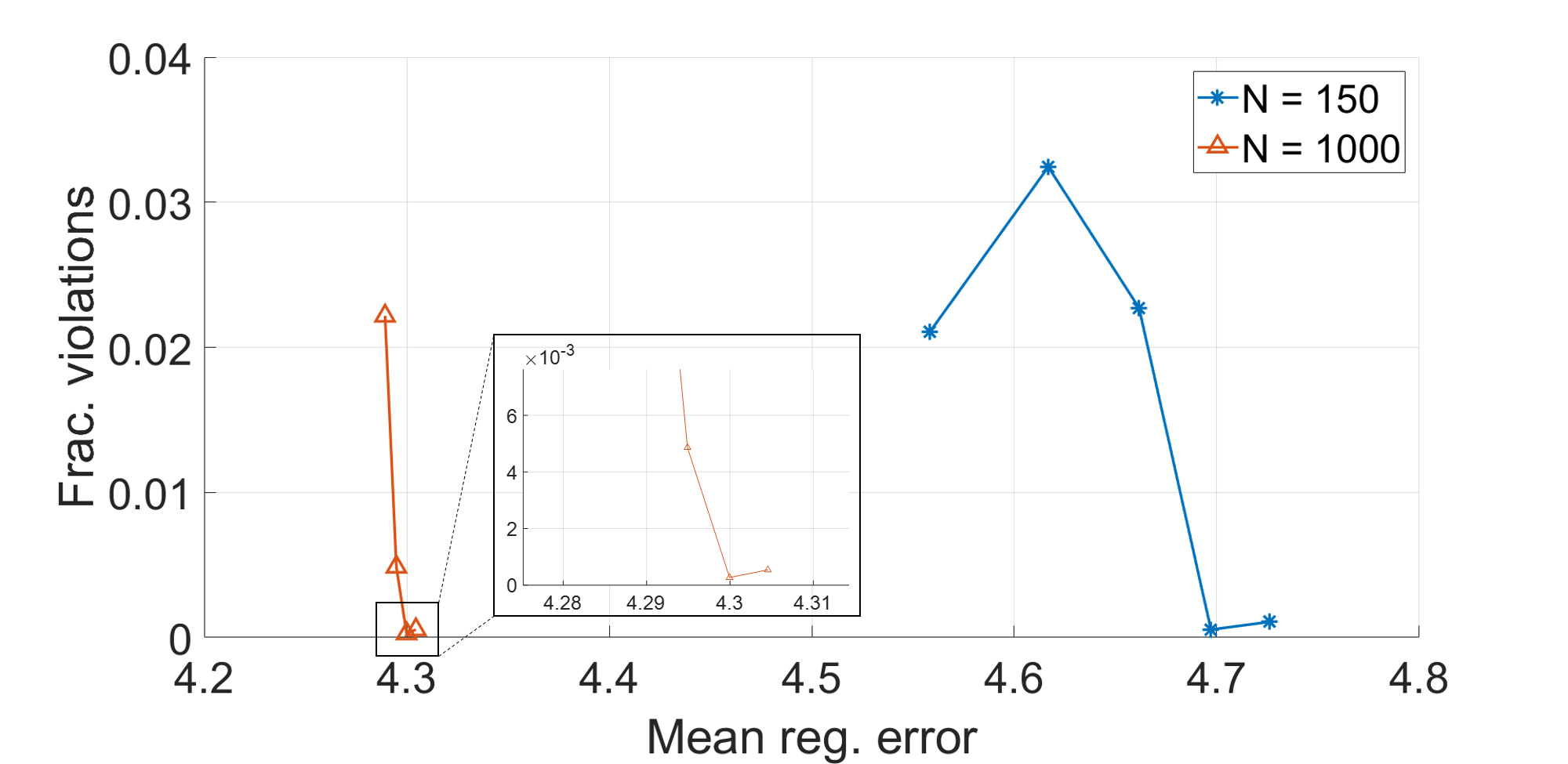}
	\caption{Evolution of mean regression error norm and fraction of violations over \emph{validation} set with global iteration number. The markers delineate the iterations. The curves proceed from left to right.}	
	\label{fig:hard_curve_reg}
\end{subfigure}
	\caption{Testbed data training curves for both CCM-R models.}
	\label{fig:hard_train_curves} 
\end{figure}

\begin{table}[H]\centering
\begin{tabular}{@{}lcccccc@{}}
\toprule
& \multicolumn{2}{c}{\textbf{R-R}} & \phantom{a} &  \multicolumn{3}{c}{\textbf{CCM-R}} \\
\cmidrule{2-3} \cmidrule{5-7}
$N$ & Train err.  & Val err. && Train err. & Val err. & Frac. viol. \\
\midrule
150 & 4.077 & 4.549 && 0.399 & 4.726 & 0.0013 (0.0011) \\
1000 & 4.149 & 4.289 && 0.167 & 4.305 & 0.0048 (0.0005) \\
\bottomrule
\end{tabular}
\caption{Comparison of average (over $N$ tuples) training and validation (over 3700 tuples) regression error norms for all 3 models. Also shown are the fraction of violations ($\nu>0$) on the training (validation) sets for the CCM-R models. The termination threshold for CCM-R training was $\nu < \varepsilon = 0.01$ for all points in the training constraint set. }
\label{tab:model_H_reg}
\end{table}

From Table~\ref{tab:model_H_reg}, we note that the final regression validation errors for R-R and CCM-R are again close together, as in the simulation case. However, the training curves in Figure~\ref{fig:hard_train_curves} appear to go in the opposite direction, i.e., validation regression error for CCM-R grows with iterations, starting from around 4.558 (almost identical to the R-R model), and growing to a final value of 4.726 as the number of violations converges to 0. A potential reason could be that the noise in the demonstration tuples (i.e., from $\dot{x}$) forces more drastic updates in the parameters in order to find a compatible dual metric $W$. Nevertheless, as will be seen in the evaluation results, this small sacrifice in validation error performance works in the CCM-R model's favor. 

\subsection{Out-of-Plane Control for Enabling Evaluation}\label{sec:quad_oop}

The out-of-plane errors in the training data were quite small (since the tracking controller was a full 3D state feedback controller). The learned planar dynamics will be used to generate a planar desired trajectory and LQR feedback tracking controller. To ensure that the quadrotor remains within the plane however, we need to separately close the loops on inertial $X$ and yaw motion. 

The nominal (ignoring disturbances) equation for $p_x$ is given by:
\[
    \ddot{p}_x = -\tau \sin(\theta), 
\]
which is completely decoupled from the planar dynamics. In order to regulate $p_x$ to an arbitrary constant $\bar{p}_x$ (without loss of generality, assume $\bar{p}_x = 0$), given a desired $\tau>0$ computed using the planar control loop, we choose a desired inertial acceleration $\ddot{p}_{x,des} := -k_x p_x - k_v \dot{p}_x$, where $k_x, k_v >0$ are control gains. Inverting the dynamics above, we obtain a desired pitch command $\theta_c$: 
\[
    \sin(\theta_c) = -\dfrac{\ddot{p}_{x,des}}{\tau} = \dfrac{k_x p_x + k_v \dot{p}_x}{\tau}. 
\]
From the above equation, we deduce a desired nominal pitch rate $\dot{\theta}_{des}$:
\[
    \dot{\theta}_{des} \cos(\theta_c) = \dfrac{\tau (k_x \dot{p}_x + k_v\ddot{p}_x) - \dot{\tau} (k_x p_x + k_v \dot{p}_x)}{\tau^2} \approx \dfrac{k_x \dot{p}_x}{\tau},
\]
where we neglect the $\dot{\tau}$ and $\ddot{p}_x$ terms. The actual pitch rate command $\dot{\theta}_c$ sent to the quadrotor is then given by:
\[
    \dot{\theta}_c = \dot{\theta}_{des} + k_{\theta} (\theta_c - \theta),
\]
which is a combination of the feedforward desired pitch rate plus a proportional feedback term on the error with respect to the desired pitch angle with gain $k_{\theta}>0$. Finally, to keep yaw $\psi$ at zero, we implemented a simple proportional controller to generate a desired yaw rate command $\dot{\psi}_c$:
\[
    \dot{\psi}_c = -k_{\psi} \psi,
\]
with gain $k_{\psi} > 0$. In the next section we provide plots of $p_x, \theta,$ and $\psi$ to check how well the planar assumption holds.

\subsection{Evaluation}

\noindent{\bf Test Trajectory Generation}: The evaluation task is similar to that used for simulations, except that instead of initializing the quadrotor at random initial configurations and asking it to stabilize to a hover (an arguably dangerous test), we designed a desired nominal trajectory in the $Y$-$Z$ plane consisting of segments of a continuous, smooth figure-eight, denoted as $(p_y^{\mathrm{ref}}(t), p_z^{\mathrm{ref}}(t))$. The nominal state and control trajectory for the quadrotor was then computed as the solution to the following trajectory optimization problem: 
\[
    (x^*(\cdot), u^*(\cdot)) = \argmin_{x(\cdot), u(\cdot)} \int_{0}^{T} \begin{bmatrix} p_y(t) - p_y^{\mathrm{ref}}(t) \\ p_z(t) - p_z^{\mathrm{ref}}(t) \end{bmatrix}^T Q \begin{bmatrix} p_y(t) - p_y^{\mathrm{ref}}(t)\\ p_z(t) - p_z^{\mathrm{ref}}(t) \end{bmatrix} + \begin{bmatrix} \tau(t)-g \\ \dot{\phi}_c(t) \end{bmatrix}^T R \begin{bmatrix} \tau(t)-g \\ \dot{\phi}_c(t) \end{bmatrix} dt,
\]
where $g$ is the gravitational acceleration, $T = 10$ s, and $Q, R$ are cost weighting matrices in $\Sjpp_2$. To avoid the initial jump in desired velocity for a quadrotor starting from rest, we designed the trajectory in three segments (see Figure~\ref{fig:figure8_hardware}). The quadrotor accelerates along half of the figure-eight path to full speed (point A to B in Figure~\ref{fig:figure8_hardware}) in the first segment. This is followed by a full figure-eight loop (back to B) in the second segment. Finally the quadrotor decelerates along the second half of the figure-eight back to zero velocity during the third segment. Each segment was designed to overlap with the desired figure-eight shape, and the desired speed was modulated along the path using a smooth time-varying phase function for the first and third segments. These two segments along with the speed along the trajectory are depicted in Figure~\ref{fig:figure8_hardware}.
\begin{figure}[h]
    \centering
    \includegraphics[width=0.75\textwidth]{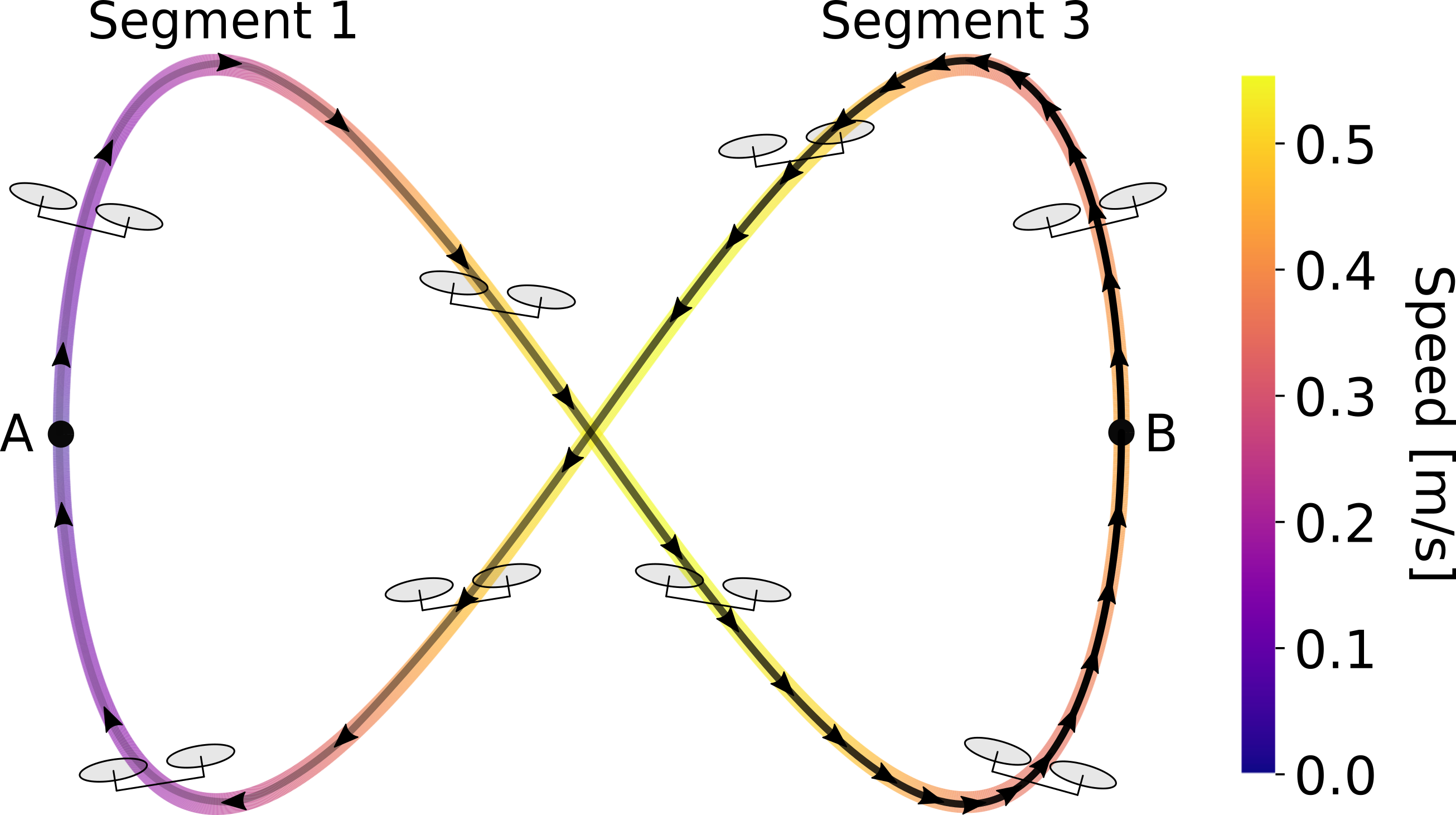}
	\caption{Illustration of the desired $Y$-$Z$ plane trajectory to be flown for evaluation of the learned models. Shown here are the first (accelerating from A to B) and third (decelerating from B to A) segments of the trajectory with the indicated speed profile. The middle segment (not shown) is a complete figure eight maneuver that overlaps with the above path. The actual reference state/control trajectories for each model are computed using trajectory optimization where the cost to be minimized is a combination of control effort and the deviation from the desired figure-eight path above. All computed trajectories, projected onto the $Y$-$Z$ plane overlap with the desired figure-eight maneuver.}
	\label{fig:figure8_hardware}
\end{figure}

For each model tested, three trajectory optimization problems were solved corresponding to the three segments, each with the same time-span of 10 s. The middle segment therefore represents the most challenging and aggressive part of the trajectory, executed following the acceleration segment. The planar tracking controller was set as the steady-state LQR controller computed from the TV-LQR solution (to avoid having to additionally compute and store the true time-varying gain solution from the backward Riccati differential equation).

\medskip

\noindent{\bf Results}: We present tracking results\footnote{Videos of the flight experiments can be found at \url{https://youtu.be/SK1tsYrXXUY}.} for all 4 models: (i) CCM-R $N=1000$, (ii) R-R $N=1000$, (iii) CCM-R $N=150$, and (iv) R-R $N=150$. Figure~\ref{fig:hard_results} plots tracking errors for the $N=150$ case and Table~\ref{tab:all_err} summarizes the tracking performance for all models in terms of RMS and maximum translational tracking errors. The quadrotor is unable to track the R-R $N=150$ model generated trajectory and quickly becomes unstable as it accelerates into the middle segment of the trajectory. A human pilot needed to take control just as the quadrotor crashed into the ground. At the point of the takeover, the net Y tracking error was approximately 1.5 m. 

In contrast, while the tracking performance for the CCM-R model is rather far from what could be achieved with an accurate model, all errors remain bounded. There is a point where the quadrotor grazes the ground as it passes through the fastest part of the middle segment, traveling downwards. The margin of error (distance between the lowest point on the trajectory and the ground) is 15 cm, which is a challenging tracking constraint to meet for a quadrotor operating using a model trained with just 150 samples of supervision. Despite this, the quadrotor recovers and successfully completes the remaining portion of the trajectory. Figure~\ref{fig:150_overlays} shows a time-lapse of the quadrotors during this middle segment of the maneuver. 

From Table~\ref{tab:all_err}, as expected for the $N=1000$ case, the tracking numbers for CCM-R and R-R are on par, with remaining differences at the noise level. For the $N=150$ case, we only present numerics up until 14 s, which is when the R-R case crashes. For this range, CCM-R significantly outperforms R-R, let alone the fact that the quadrotor operating with the CCM-R model manages to complete the entire 30 s trajectory while maintaining stability. The results confirm the trend observed within simulations that, at low sample regimes, the stabilizability constraints enforced during the CCM-R model learning process have a dramatic context-driven regularization effect.

\begin{figure}[H]
    \centering%
    \includegraphics[width=\textwidth]{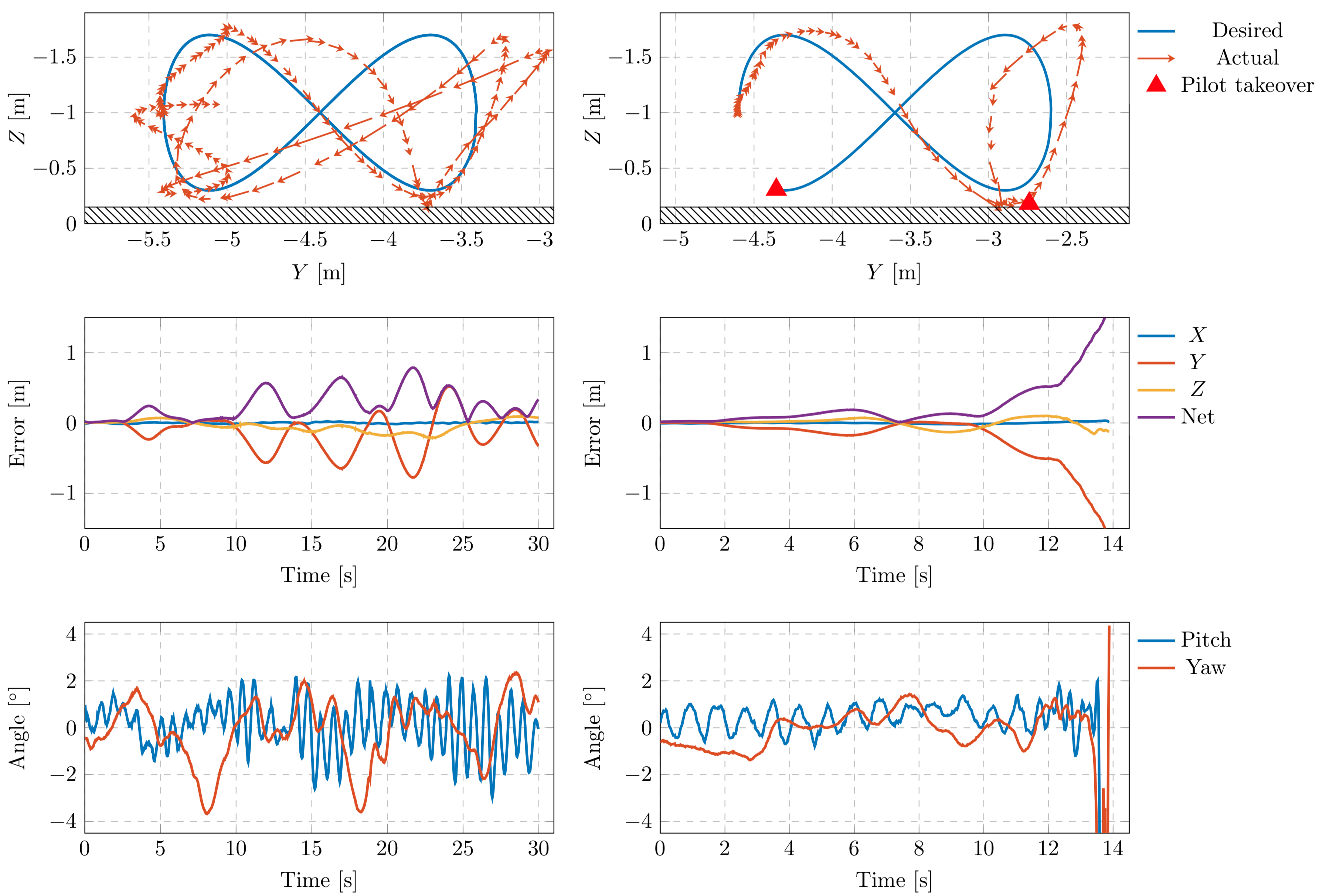}%
    \caption{Visualization of tracking results for the learned models CCM-R $N=150$ (left column) and R-R $N=150$ (right column). The top row shows the desired and actual trajectory traces in the inertial $Y$-$Z$ plane, while the middle row shows the error over time in the inertial $X$, $Y$, and $Z$ coordinates. The quadrotor is unable to track the R-R $N=150$ model generated trajectory and quickly becomes unstable as it accelerates into the middle segment of the trajectory. A human pilot needed to take control just as the quadrotor crashed into the ground. At the point of the takeover, the net Y tracking error was 1.5 m. Tracking the CCM-R $N=150$ model generated trajectory, the quadrotor achieves bounded tracking errors and successfully completes the trajectory. The bottom row shows the pitch and yaw angles over time, which remain close to zero, thereby ensuring our planar motion assumption is valid.}%
    \label{fig:hard_results}
\end{figure}



\begin{table}[H]\centering
\begin{tabular}{@{}lccccccccccc@{}}
\toprule
    & \multicolumn{2}{c}{\textbf{CCM-R $\bm{N=1000}$}}  & \phantom{\,} 
    & \multicolumn{2}{c}{\textbf{R-R $\bm{N=1000}$}}    & \phantom{\,} 
    & \multicolumn{2}{c}{\textbf{CCM-R $\bm{N=150}$}}   & \phantom{\,} 
    & \multicolumn{2}{c}{\textbf{R-R $\bm{N=150}$}} \\
            \cmidrule{2-3}     \cmidrule{5-6}      \cmidrule{8-9}      \cmidrule{11-12}
Error   & RMS   & Max.     && RMS   & Max.     && RMS   & Max.     && RMS   & Max.  \\
\midrule
$Y$         & 0.150 & 0.445    && 0.126 & 0.290    && 0.208 & 0.567    && 0.423 & 1.618 \\
$Z$         & 0.066 & 0.167    && 0.076 & 0.196    && 0.047 & 0.090    && 0.068 & 0.157 \\
Net         & 0.165 & 0.446    && 0.148 & 0.308    && 0.213 & 0.570    && 0.429 & 1.623 \\
\bottomrule
\end{tabular}
\caption{Numerical tracking results for all learned models. These include both root mean square (RMS) and maximum error values (in meters) over the entire trajectory. At $N=1000$ training points, both the CCM-R and R-R models yield similar tracking performance with remaining differences at the noise level. The tracking numbers for $N=150$ are presented up until 14 s, which is when the crash occured with the R-R model. CCM-R clearly outperforms R-R during those first 14 s, let alone the fact that the quadrotor operating with the CCM-R model successfully completes the entire 30 s trajectory while maintaining bounded errors.}
\label{tab:all_err}
\end{table}

\section{Conclusions and Future Work}

We presented a framework for learning \emph{controlled} dynamics from demonstrations for the purpose of trajectory optimization and control for continuous robotic tasks. By leveraging tools from nonlinear control theory, chiefly, contraction theory, we introduced the concept of learning \emph{stabilizable} dynamics, a notion which guarantees the existence of feedback controllers for the learned dynamics model that ensures trajectory trackability. 
Borrowing tools from  Reproducing Kernel Hilbert Spaces and convex optimization, we proposed a bi-convex semi-supervised algorithm for learning the dynamics and provided a substantial numerical study of its performance and comparison with traditional regression techniques. In particular, we validated the algorithm within simulations for a planar quadrotor system, and on a full quadrotor hardware testbed with partially closed loops to emulate planar quadrotor dynamics. The results lend credence to the hypothesis that enforcing stabilizability constraints during the learning process can have a dramatic regularization effect upon the learned dynamics in a manner that is tailored to the downstream task of trajectory generation and feedback control. This effect is most apparent when learning from smaller supervised training datasets, where we showed, both in simulation and on hardware, the quadrotor losing control and crashing when using a model learned with traditional ridge-regression. In contrast, the quadrotor is able to maintain bounded tracking performance when using the stabilizability regularized model in the low sample learning regime, and is on par with the ridge regularized model in the large sample regime. 

\subsection{Challenges and Extensions}

There are several exciting future directions that we would like to pursue, categorized below. 

\medskip

\noindent {\bf Multi-Step Error}: An immediate theoretically sound extension of the work to incorporate \emph{multi-step} regression error would be to leverage the linearity in parameters of the estimated model and integrate the nonlinear features and the control signal in time. This would result in an inner-product between time-varying features and the parameters, thereby retaining linearity in parameters, with the supervisory signal now being state samples along the trajectory, as opposed to the time-derivative of the state. A similar transformation is central also to composite adaptive control for robotic systems~\citep{SlotineLi1989,WensingSlotine2018}. While such a transformation would additionally require the full control time signal for each sampled trajectory (as opposed to state-control samples), one eliminates the need for numerical estimation of the state time-derivative along the trajectory, thereby improving the signal-to-noise ratio.

\medskip

\noindent {\bf Leveraging Model Priors}: The problem studied in this work concerned learning the full dynamics model from scratch. However, one can often leverage physics-based modeling to form a baseline, reducing the role of learning to estimate the residual dynamics. Instead of adopting an additive error formulation as is common in the literature, a variation of contraction theory -- partial contraction~\citep{WangSlotine2005} -- may be utilized to \emph{smoothly interpolate} between the prior model and the model learned from data using an auxiliary dynamical system, termed the ``virtual dynamics." This system has the special property in that the prior model and the learned model are both particular solutions to the virtual dynamics. Using similar stabilizability constraints, one can construct a model and feedback controller that forces the solution of the learned dynamics model (representing the true dynamics) to converge exponentially to the solution of the prior model, which can be designed to satisfy desirable performance criteria. 

\medskip

\noindent {\bf Notion of Stabilizability}: The form of control-theoretic regularization employed in this work was founded upon conditions for exponential stabilizability of smooth systems via state feedback. In order to broaden the applicability of incorporating control-theoretic notions within learning, it is of interest to investigate not only other, more general forms of stabilizability (e.g., boundedness), but also extend the analysis to hybrid mechanical systems. This would enable tackling more challenging problems within the manipulation and locomotion domains where prior models can be severely inaccurate due to the complexity of modeling contact physics. 

\medskip

\noindent {\bf Computational and Experimental}: By far the largest hurdle in being able to extend our algorithm to higher-dimensional systems (e.g., visual and contact domains) lies in addressing the LMI constraints. There are three key approaches that may be used to address this challenge. The first involves leveraging distributed optimization techniques such as ADMM~\citep{BoydParikhEtAl2011} to distribute and parallelize the constraints (which are encoded within the cost in sum-separable form). A second approach is to collapse the pointwise LMI constraints into a single LMI that is the weighted average of all the $\Fl$ matrices, where the weighting scales with the maximum eigenvalue of each of the $\Fl$ matrices. This is a non-convex constraint that can be solved using alternation by fixing the weights from the previous iteration. Indeed, this is equivalent to re-writing all the LMI constraints in Lagrangian form and performing primal-dual descent on the complementarity condition. While the above methods can allow scaling in terms of \emph{number of constraint points}, in order to scale with respect to problem dimensionality, one must resort to first-order unconstrained gradient descent methods, where non-degeneracy in the contraction metric can be achieved by parameterizing the dual metric $W$ in Cholesky form $L L^T$. This opens the possibility of using more expressive function approximators such as deep-neural-networks, albeit, at the expense of a fully non-convex formulation.

\medskip

We believe that the dynamics learning framework presented in this work provides compelling motivation from both a stability and data efficiency standpoint, for incorporating control-theoretic notions within learning as a means of context-driven hypothesis pruning.


\bibliographystyle{SageH}
\newcommand{\noopsort}[1]{} \newcommand{\printfirst}[2]{#1}
  \newcommand{\singleletter}[1]{#1} \newcommand{\switchargs}[2]{#2#1}

\newpage
\appendix
\section*{Appendix}
\section{Solution Parameters for PVTOL}
\label{app:prob_params}

Notice that the true input matrix for the PVTOL system satisfies Assumption~\ref{ass:B_simp}. Furthermore, it is a constant matrix. Thus, the feature mapping $\Phi_b$ is therefore just a constant matrix with the necessary sparsity structure.

The feature matrix for $f$ was generated using the random matrix feature approximation to the Gaussian separable matrix-valued kernel with $\sigma = 6$ and $s = 8n = 48$ sampled Gaussian directions, yielding a feature mapping matrix $\Phi_f$ with $d_f = 576$ (96 features for each component of $f$). The scalar-valued reproducing kernels for the entries of $W$ were taken to be the Gaussian kernel with $\sigma = 15$. To satisfy condition~\eqref{killing_A}, the kernel for $w_{ij}, \ (i,j) \in \{1,\ldots,(n-m)\}$ was only a function of the first $n-m$ components of $x$. A total of $s = 36$ Gaussian samples were taken to yield feature vectors $\phi_w$ and $\hat{\phi}_w$ of dimension $d_w = 72$.  Furthermore, by symmetry of $W(x)$ only $n(n+1)/2$ functions were actually parameterized. Thus, the learning problem in~\eqref{learn_finite} comprised of $d_f + d_w n(n+1)/2 + 4 = 508$ parameters for the functions, plus the extra scalar constants $\lambda,\wl,\wu$.

The learning parameters used were: model N-R (all $N$): $\mu_f = 0, \mu_b = 10^{-6}$, R-R (all $N$): $\mu_f = 10^{-4}, \mu_b = 10^{-6}$, CCM-R (all $N$): $\mu_f = 10^{-3}, \mu_b = 10^{-6}$; $N \in \{100,250,500\}: \mu_w = 10^{-3}$, $N = 1000: \mu_w = 10^{-4}$. Tolerance parameters: constraints: $\{\epsilon_{\lambda},\delta_{\wl}, \epsilon_{\wl}\} = \{0.1,0.1,0.1\}$; discard tolerance $\delta = 0.05$. Note that a small penalization on $\mu_b$ was necessary for all models due to the fact that feature matrix $\Phi_b$ is rank deficient.

\section{CCM Controller Synthesis} \label{ccm_appendix}

Let $\Gamma(p,q)$ be the set of smooth curves $c:[0,1] \rightarrow \X$ satisfying $c(0) = p, c(1) = q$, and define $\delta_c(s) := \partial c(s)/\partial s$. At each time $t$, given the nominal state/control pair $(x^*,u^*)$ and current actual state $x$:
\begin{leftbox}
\begin{enumerate}
    \item Compute a curve $\gamma \in \Gamma(x^*,x)$ defined by:
\begin{equation}
    \gamma \in \argmin_{c\in \Gamma(x^*, x)} \int_{0}^{1} \delta_c(s)^T M(c(s)) \delta_c (s) ds,
\end{equation}
    and let $\mathcal{E}$ denote the minimal value.
    \item Define: 
    \begin{equation}
    \begin{split}
        \mathcal{E}_d(k):= &2 \delta_{\gamma}(1)^T M(x) (f(x) + B(x) (u^* + k)) \\
        &- 2\delta_{\gamma}(0)^T M(x^*) (f(x^*) + B(x^*)u^*).
    \end{split}
    \label{ccm_ineq}
    \end{equation}
    \item Choose $k(x^*,x)$ to be any element of the set:
    \begin{equation}
        \mathcal{K} := \left\{ k : \mathcal{E}_d(k) \leq -2\lambda \mathcal{E} \right\}.
    \label{ccm_cntrl}
    \end{equation}
\end{enumerate}
\end{leftbox}

By existence of a CCM $M(x)$ (equivalently, its dual $W(x)$), the set $\mathcal{K}$ is always non-empty. The resulting $k(x^*,x)$ then ensures that the solution $x(t)$ indeed converges towards $x^*(t)$ exponentially~\citep{SinghMajumdarEtAl2017}.

From an implementation perspective, note that having obtained the curve $\gamma$, constraint~\eqref{ccm_cntrl} is simply a linear inequality in $k$. Thus, one can analytically compute a feasible feedback control value, e.g., by searching for the smallest (in any norm) element of $\mathcal{K}$; for additional details, we refer the reader to~\citep{ManchesterSlotine2017,SinghMajumdarEtAl2017}. This controller was not used for the CCM-R experiments in this paper however since the contraction conditions are only enforced at the discrete sampled constraint set, as opposed to over the continuous state space $\X$.

\end{document}